\numberwithin{equation}{section}
\def\p{\partial}
\def\mb{\mathbb}
\def\mr{\mathrm}
\def\op{\operatorname}
\def\ra{\rightarrow}
\def\bc{\mathbb C}
\def\br{\mathbb R}
\def\bz{\mathbb Z}
\def\bs{\boldsymbol}
\def\rro{\bs{\rho}}
\def\cal{\mathcal}
\theoremstyle{plain}
\newtheorem{thm}{Theorem}[section]
\newtheorem{prop}[thm]{Proposition}
\newtheorem{cor}[thm]{Corollary}
\theoremstyle{definition}
\newtheorem{rem}[thm]{Remark}
\newtheorem{ntn}[thm]{Notation}
\theoremstyle{definition}
\newtheorem{sthm}{Theorem}
\newcommand{\comment}[1]{}
\begin{document}

\title{On possible values of the signature of flat symplectic bundles over surfaces with boundary}
\author{InKang Kim}
\author{Pierre Pansu}
\author{Xueyuan Wan}

\address{Inkang Kim: School of Mathematics, KIAS, Heogiro 85, Dongdaemun-gu Seoul, 02455, Republic of Korea}
\email{inkang@kias.re.kr}

\address{Pierre Pansu:
Universit\'e Paris-Saclay, CNRS, Laboratoire de Math\'ematiques d'Orsay\\ 91405 Orsay C\'edex, France}
\email{pierre.pansu@universite-paris-saclay.fr}

\address{Xueyuan Wan: Mathematical Science Research Center, Chongqing University of Technology, Chongqing 400054, China}
\email{xwan@cqut.edu.cn}
\begin{abstract}
We show that every integer in the interval $[2p\chi(\Sigma), -2p\chi(\Sigma)]$ is achieved by the signature of a rank $2p$ flat symplectic bundle over a surface with boundary $\Sigma$. When $p=1$, one can prescribe the type (elliptic, parabolic, hyperbolic) of the holonomy along the boundary.
\end{abstract}

  \subjclass[2020]{14J60, 58J20, 58J28}  
  \keywords{Signature,  Toledo invariant, surface group representations,  rho invariant, relative Euler number, smooth point of representation variety, Milnor-Wood inequalities}
  \thanks{Research by Inkang Kim is partially supported by Grant NRF-2019R1A2C1083865 and KIAS Individual Grant (MG031408). The first author is grateful for the support of IHES during his visits. Pierre Pansu is supported by Agence Nationale de la Recherche, ANR-22-CE40-0004 GOFR, and Xueyuan Wan is partially supported by the National Natural Science Foundation of China (Grant No. 12101093) and the Natural Science Foundation of Chongqing (Grant No. CSTB2022NSCQ-JQX0008), the Scientific Research Foundation of the Chongqing University of Technology.}

\maketitle
\tableofcontents

\section*{Introduction}
The moduli space of flat bundles associated with representations $\phi:\pi_1(\Sigma)\ra G$ has been extensively studied when $\Sigma$ is an orientable surface (with boundary), and $G$ is a semisimple Lie group. Especially its character variety, which is a geometric quotient of the moduli space under the conjugacy action, has drawn many pioneers' attention. Notably N. Hitchin \cite{Hitchin} expressed the variety in algebro-geometric terms, using the notion of Higgs bundle, and more recently F. Labourie \cite{Lab} introduced a dynamical point of view with the notion of Anosov representation. 

When $G$ is the symplectic group, or the isometry group of a nondegenerate Hermitian form, the $1$-cohomology of flat bundles carries a nondegenerate quadratic form, the intersection form, whence a signature. G. Lusztig \cite{Lus} and W. Meyer \cite{Meyer, Meyer1} used the index theorem to express it as an integral. Later, M. Atiyah \cite{Atiyah} extended the discussion to compact surfaces with non-empty boundary. In this context, a result similar to Atiyah-Patodi-Singer's index theorem was obtained in \cite{KPW}. The signature of the flat bundle associated with a representation $\pi_1(\Sigma)\ra G$ where $G$ is a simple Lie group of Hermitian type, decomposes into an integration part, and a boundary term. The integration part is known as the Toledo invariant, and the boundary term is now called a $\rro$-invariant.

The Toledo invariant was first used by D. Toledo \cite{Toledo} to show a rigidity result for closed surface group representations in $U(n,1)$, and later studied by many others to deduce rigidity results when the maximum value is attained \cite{KM1}. M. Burger, A. Iozzi and A. Wienhard \cite{BIW} extended the definition of the Toledo invariant to surfaces with boundary using M. Gromov's bounded cohomology theory.

On the other hand, W. Goldman \cite{Gold1} studied the relative Euler class of flat bundles over surfaces with boundary trivialized canonically along the boundary, and showed that each possible value for the relative Euler number $e$ is attained by some representations whose boundary holonomy is hyperbolic. Indeed, when $\Sigma$ is a closed surface with genus $g$, he showed that the connected components of  $\text{Hom}(\pi_1(\Sigma), PSL(2,\br))$ are $e^{-1}(m)$, where $m$ is an integer satisfying the Milnor-Wood inequality $|m|\leq |\chi(\Sigma)|$. The equality $|e(\phi)|=|\chi(\Sigma)|$ holds if and only if $\phi$ is the holonomy representation of a complete hyperbolic structure on $\Sigma$.

In this paper, we study the possible values of the signature, which is an upgraded version of  the relative Euler number, of flat bundles associated with representations $\pi_1(\Sigma)\ra Sp(2p,\br)$ when $\Sigma$ has boundary. From \cite[Theorem 4 (iii)]{KPW}, we know that it satisfies an inequality of Milnor-Wood type,
\begin{equation}\label{MW}
-2p|\chi(\Sigma)|\le \text{sign}\le 2p|\chi(\Sigma)|.
\end{equation}
For representations to $U(p,p)$, a similar estimate holds.
In this paper, we show that these inequalities are sharp.

\begin{sthm}
\label{main}
Given an orientable surface with boundary $\Sigma$ of negative Euler characteristic and an integer $m$ such that $-2p|\chi(\Sigma)| \leq m \leq 2p|\chi(\Sigma)|$, there exists a representation $\phi:\pi_1(\Sigma)\ra Sp(2p,\br)$ and a representation $\phi':\pi_1(\Sigma)\ra SU(p,p)$ whose signature is $m$.
\end{sthm}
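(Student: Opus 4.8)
The plan is to deduce the general statement from the rank-one case by an additivity argument, and to settle the rank-one case by deforming a maximal representation while tracking the boundary $\rro$-invariant. The additivity step is the following: given representations $\phi_1,\dots,\phi_p:\pi_1(\Sigma)\ra Sp(2,\br)=SL(2,\br)$, the block-diagonal representation $\phi=\bigoplus_{i=1}^p\phi_i$ into $Sp(2,\br)^p\subset Sp(2p,\br)$ carries the flat bundle $E_\phi=\bigoplus_i E_{\phi_i}$, the twisted cohomology splits accordingly, and the intersection form is the orthogonal direct sum of the forms of the summands; hence $\op{sign}(\phi)=\sum_i\op{sign}(\phi_i)$. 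The same construction works verbatim for $SU(1,1)^p\subset SU(p,p)$, the determinant-one condition being preserved blockwise. Since each set $[-2|\chi(\Sigma)|,2|\chi(\Sigma)|]\cap\bz$ consists of consecutive integers symmetric about $0$, every integer $m$ with $-2p|\chi(\Sigma)|\leq m\leq 2p|\chi(\Sigma)|$ splits as $m=\sum_{i=1}^p m_i$ with $m_i\in[-2|\chi(\Sigma)|,2|\chi(\Sigma)|]\cap\bz$. It therefore suffices to realize every such $m_i$ in the rank-one groups $SL(2,\br)\cong SU(1,1)$, which simultaneously handles both target groups $Sp(2p,\br)$ and $SU(p,p)$.

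For the rank-one problem I would rely on the index-theoretic splitting of \cite{KPW}, writing the signature as the Toledo invariant plus a boundary $\rro$-invariant that depends only on the conjugacy classes of the boundary holonomies. The two endpoints $\pm2|\chi(\Sigma)|$ are attained by the holonomy of a complete hyperbolic structure on $\Sigma$ and by its orientation reversal: these are maximal Fuchsian representations saturating the Milnor--Wood inequality \eqref{MW}, and (because $\pi_1(\Sigma)$ is free) they lift to $SL(2,\br)$ without obstruction, with hyperbolic boundary holonomy.

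To reach the intermediate integers I would deform such a maximal representation by rotating the holonomy of one boundary component, moving it from a hyperbolic element through a parabolic one into elliptic elements of increasing rotation angle. As long as the deformation preserves the conjugacy type of every boundary holonomy, the Toledo term and the $\rro$-invariant vary continuously while their sum stays integral, so the signature is constant; each time the boundary holonomy crosses the parabolic locus the $\rro$-invariant jumps, and the explicit dependence of the $\rro$-invariant on the rotation angle shows that this jump is exactly $\pm1$. Sweeping the angle across successive critical values thus decreases the signature one unit at a time from $2|\chi(\Sigma)|$ down to $-2|\chi(\Sigma)|$, and stopping the deformation with the last boundary holonomy elliptic, parabolic, or hyperbolic prescribes the boundary type.

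The step I expect to be hardest is the exact computation of the $\rro$-invariant for the three conjugacy classes of $SL(2,\br)\cong SU(1,1)$, and the verification that its jumps across the parabolic locus are precisely $\pm1$ so that no integer in the Milnor--Wood interval \eqref{MW} is skipped. Equally, one must check that a single boundary component affords enough range, or else distribute the deformation over several boundary components and genus handles, while keeping the bookkeeping that simultaneously pins down the target signature and the prescribed boundary types. This explicit analysis of the $\rro$-invariant, rather than the formal additivity reduction, is the technical heart of the argument.
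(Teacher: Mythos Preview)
Your additivity reduction to $p=1$ via the block-diagonal embedding $SL(2,\br)^p\subset Sp(2p,\br)$ and $SU(1,1)^p\subset SU(p,p)$ is correct and is exactly what the paper does.

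The rank-one deformation argument, however, has a genuine gap. The claim that sweeping a boundary holonomy from hyperbolic through parabolic into the elliptic range decreases the signature ``one unit at a time'' is not right, because the Toledo term and the boundary $\rro$-invariant compensate. Concretely: start from a Fuchsian holonomy with geodesic boundary, so $\op{T}=|\chi(\Sigma)|$, $\rro=0$, $\mathrm{sign}=2|\chi(\Sigma)|$. If you degenerate one boundary to a cone point of angle $\varphi\in(0,2\pi)$, the Toledo invariant drops to $|\chi(\Sigma)|-\varphi/2\pi$, while the $\rro$-contribution of that boundary becomes $2(1-\theta/\pi)$ with $\theta$ the rotation parameter; these cancel and the signature \emph{stays} equal to $2|\chi(\Sigma)|$ (this is exactly the calculation in Section~\ref{relative}). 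So rotating into the elliptic range does not sweep out the interval. At the parabolic crossing itself one can pick up $\pm1$ by stopping there, but that gives at most one new value per boundary component, not a full sweep; and your stated $\pm1$ jump across the parabolic locus (hyperbolic side to elliptic side) is actually $\pm2$ in $\rro$, compensated by a continuous change in $\op{T}$.

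The paper's route is different and avoids this trap. For even signatures it uses representations into $SO(2)$: these have $\op{T}=0$ identically, so $\mathrm{sign}=\rro(\partial\Sigma)$, and by choosing the boundary angles one realizes every even integer in $[4-2n,2n-4]$ on a genus-zero surface with $n$ boundary components (Proposition~\ref{prop-ell}). Odd signatures are obtained by gluing on an explicit pair-of-pants block with one parabolic boundary and $\mathrm{sign}=\pm1$. Finally, higher genus is handled by decomposing $\Sigma$ as a genus-zero piece connected-summed with $g$ copies of the once-punctured torus, each carrying a cone-point representation of signature $2$. The key advantage of this construction-and-gluing approach is that it decouples the Toledo and $\rro$ contributions (the $SO(2)$ blocks have $\op{T}=0$; the handle blocks have fixed $\mathrm{sign}=2$), so nothing compensates and every target integer is hit.
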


The corresponding problem for unitary groups $U(p,q)$ with $p\not=q$ still eludes us. Here is the answer in the special case when $q=0$.

\begin{sthm}
\label{U(p)}
Let $\Sigma$ be an oriented surface of genus $g$ with $n$ boundary components. Let $p\geq 1$ be an integer. The values achieved by the signature of representations $\pi_1(\Sigma)\to U(p)$ are exactly as follows:
\begin{equation*}
   \begin{cases}
 [-p(n-2),p(n-2)]	& \text{ for }g=0 \text{ and }n\geq 2\\
 [2-np,np-2]\cup \{0\}	&\text{ for } g\geq 1 \text{ and } n\geq 1.
 \end{cases}
\end{equation*}
\end{sthm}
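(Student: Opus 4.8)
The plan is to reduce the signature to a sum of boundary $\rho$-invariants, convert the realizability of prescribed boundary holonomies into a group-theoretic condition, and then solve two optimization problems, one for $g=0$ and one for $g\geq1$.

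First I would invoke the signature formula of \cite{KPW}: for $\phi:\pi_1(\Sigma)\ra Sp(2p,\br)$ the signature is the sum of the Toledo invariant and the boundary $\rho$-invariants. Since $U(p)$ is a maximal compact subgroup of $Sp(2p,\br)$, a representation with image in $U(p)$ fixes a point of the Siegel space, so the equivariant map may be taken constant and the Toledo invariant vanishes. Hence, up to an overall sign,
\begin{equation*}
\text{sign}(\phi)=\sum_{j=1}^n\rho(\phi(c_j)),
\end{equation*}
where $c_1,\dots,c_n$ are the boundary loops and, for $U\in U(p)$ with eigenvalues $e^{2\pi i\theta_1},\dots,e^{2\pi i\theta_p}$, $\theta_k\in[0,1)$, one has $\rho(U)=\sum_{\theta_k\neq 0}(1-2\theta_k)$. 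Two features of $\rho$ drive everything: $\rho(U^{-1})=-\rho(U)$, so the signature is additive under gluing of surfaces along boundary circles; and writing $\rho(U)=\nu(U)-2\tau(U)$ with $\nu(U)=\text{rk}(U-I)$ and $\tau(U)=\sum_k\theta_k$, one has $\det U=e^{2\pi i\tau(U)}$.

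Next I would record the constraint coming from the surface relation $\prod_i[\phi(a_i),\phi(b_i)]\prod_j\phi(c_j)=I$. For $g=0$ this forces $\phi(c_1)\cdots\phi(c_n)=I$. For $g\geq1$ it only requires $\phi(c_1)\cdots\phi(c_n)\in SU(p)$, and conversely every element of $SU(p)$ is a product of $g$ commutators (the commutator map onto a compact connected semisimple group is surjective), so the boundary holonomies may be prescribed arbitrarily subject to $\det\prod_j\phi(c_j)=1$, i.e. $\sum_j\tau(\phi(c_j))\in\bz$. For the upper bounds I then treat the two cases separately. When $g\geq1$ the only constraint is $\sum_{j}\tau(\phi(c_j))\in\bz$, so maximizing $\sum_j\rho=\sum_j\nu-2\sum_j\tau$ over the $np$ eigenvalue angles gives $np-2$ (all angles nontrivial, total angle equal to the minimal positive integer $1$); the symmetry $\rho(U^{-1})=-\rho(U)$ gives the lower bound $-(np-2)$, and the degenerate case $n=p=1$ yields only $\{0\}$. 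When $g=0$ I would use additivity to decompose a genus-$0$ surface with $n$ boundary components into $n-2$ pairs of pants; since the internal gluing circles cancel, it suffices to prove the pair-of-pants inequality $|\rho(U_1)+\rho(U_2)+\rho(U_3)|\leq p$ whenever $U_1U_2U_3=I$. I expect this to be the \emph{main obstacle}: the determinant constraint alone only yields the weaker bound $3p-2$, and the sharp bound $p$ genuinely uses the multiplicative eigenvalue inequalities for products of unitary matrices (the Agnihotri--Woodward/Belkale description of the spectrum of a product), which force one factor to have large total angle whenever the other two are close to the identity.

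Finally, for realizability I would construct explicit representations. For $g=0$ a direct sum of $p$ representations into $U(1)$, each realizing an arbitrary integer in $[-(n-2),n-2]$ by choosing boundary angles with integral sum and adjusting how many are nontrivial to control parity, realizes every integer in $[-p(n-2),p(n-2)]$. For $g\geq1$, direct sums into $U(1)$ only reach $p(n-2)$ since abelian commutators are trivial, so to attain the larger values up to $np-2$ I would take the $\phi(c_j)$ to be generic small rotations with all $np$ eigenvalues nontrivial and total angle $1$, set $\prod_j\phi(c_j)$ equal to the required element of $SU(p)$ realized through commutators, and then vary the number of nontrivial eigenvalues and the total angle to sweep out every integer in $[2-np,np-2]$; the isolated value $0$ covers the case $n=p=1$, where $\phi(c_1)\in SU(1)$ is forced to be trivial.
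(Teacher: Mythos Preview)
Your overall architecture is correct and matches the paper's: the Toledo invariant vanishes for $U(p)$-representations, so the signature is the sum of boundary $\rro$-invariants, and the determinant constraint $\sum_j\tau(\phi(c_j))\in\bz$ governs the optimization. The genus $\ge 1$ upper bound argument is exactly the paper's.

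There are two points where you diverge from the paper, one of which is a genuine detour.

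\textbf{Genus $0$ upper bound.} You propose decomposing into pairs of pants and proving $|\rho(U_1)+\rho(U_2)+\rho(U_3)|\le p$ for $U_1U_2U_3=I$ via the Agnihotri--Woodward/Belkale multiplicative eigenvalue inequalities, and you flag this as the main obstacle. In the paper there is no obstacle here at all: the bound $|\mr{sign}(\phi)|\le (p+q)|\chi(\Sigma)|$ for $U(p,q)$-representations is already established in \cite[Theorem~4]{KPW}, and setting $q=0$ gives $|\mr{sign}|\le p(n-2)$ immediately. Your route would work, but it re-proves a special case of a theorem the paper takes as input, using substantially heavier tools.

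\textbf{Genus $\ge 1$ construction.} You invoke the surjectivity of the commutator map onto a compact connected semisimple group to realize any prescribed $\prod_j\phi(c_j)\in SU(p)$ as a single commutator $[\phi(a_1),\phi(b_1)]$. The paper instead writes down explicit matrices: with $\Theta_j=\sum_i\theta_{i,j}$ and $Q=\mathrm{diag}(e^{i\Theta_1},e^{i(\Theta_1+\Theta_2)},\dots)$, the permutation-type matrices
\[
A=\begin{pmatrix}0 & I_{p-1}\\ 1 & 0\end{pmatrix},\qquad B=\begin{pmatrix}0 & 1\\ Q & 0\end{pmatrix}
\]
satisfy $[A,B]=\mathrm{diag}(e^{i\Theta_1},\dots,e^{i\Theta_{p-1}},e^{-i\sum_{j<p}\Theta_j})$, which is exactly $(\phi(c_1)\cdots\phi(c_n))^{-1}$. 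Both approaches are valid; yours is conceptually cleaner, the paper's is self-contained and avoids citing Goto's theorem.

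The realizability arguments (direct sums of $U(1)$-representations for $g=0$, and sweeping through all integers in $[2-np,np-2]$ by adjusting the number of nontrivial eigenvalues and the total angle for $g\ge 1$) are the same as the paper's.
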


Back to symplectic groups. When $p=1$, more precise results can be given. Conjugacy classes of $SL(2,\br)$ split into elliptic, parabolic and hyperbolic types (defined in Subsection \ref{ellparhyp}). We investigate three subclasses of representations:
\begin{enumerate}
  \item \emph{boundary paraelliptic} ones, i.e. those which have trace in $[-2,2]$ on each boundary component.
  \item \emph{boundary hyperparabolic} ones, i.e. those which are either hyperbolic or parabolic on each boundary component,
  \item \emph{boundary elliptic} ones, i.e. those which are elliptic on each boundary component.
\end{enumerate}
The proof given of Theorem \ref{main} for $p=1$ only uses boundary paraelliptic representations. The other subclasses are dealt with in the following statements.

\begin{sthm}
\label{hyperparabolic}
Let $\Sigma$ be an oriented surface with boundary of negative Euler characteristic. The values achieved by the signatures of boundary hyperparabolic representations $\pi_1(\Sigma)\to SL(2,\br)$ are exactly all integers between $2\chi(\Sigma)$ and $-2\chi(\Sigma)$.
\end{sthm}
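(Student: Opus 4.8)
The plan is to read off the attainable signatures from the Atiyah--Patodi--Singer type formula of \cite{KPW} together with Goldman's realization of the relative Euler number \cite{Gold1}. For a boundary hyperparabolic representation $\phi:\pi_1(\Sigma)\ra SL(2,\br)$ the formula of \cite{KPW} specializes, for $p=1$, to
\begin{equation*}
\text{sign}(\phi)=2\,e(\phi)+\sum_{i=1}^{n}\rro(\phi(\gamma_i)),
\end{equation*}
where $\gamma_1,\dots,\gamma_n$ are the boundary curves and $e(\phi)$ is the relative Euler number, computed with respect to the section determined along $\p\Sigma$ by the holonomy --- a section available precisely because each boundary holonomy is hyperbolic or parabolic rather than elliptic. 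By the local computation of Subsection \ref{ellparhyp}, the boundary term $\rro(\phi(\gamma_i))$ vanishes when $\phi(\gamma_i)$ is hyperbolic and equals $+1$ or $-1$ according to the conjugacy type when $\phi(\gamma_i)$ is parabolic (the two parabolic types being inequivalent in $SL(2,\br)$). The bounds $2\chi(\Sigma)\le\text{sign}(\phi)\le-2\chi(\Sigma)$ are exactly \eqref{MW} with $p=1$, using $-2|\chi(\Sigma)|=2\chi(\Sigma)$ since $\chi(\Sigma)<0$; so the task is to realize every intermediate integer within the hyperparabolic class.

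The even integers are essentially Goldman's theorem. As recalled in the introduction, \cite{Gold1} produces, for each integer $e$ with $|e|\le|\chi(\Sigma)|$, a representation with all boundary holonomies hyperbolic and relative Euler number $e$. For such $\phi$ every $\rro(\phi(\gamma_i))$ vanishes, so $\text{sign}(\phi)=2e$ runs through all even integers in $[2\chi(\Sigma),-2\chi(\Sigma)]$.

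To reach the odd integers exactly one boundary component must be parabolic. I would first obtain the two extreme odd values $2\chi(\Sigma)+1$ and $-2\chi(\Sigma)-1$ from a complete finite--area hyperbolic structure on $\Sigma$ with a single cusp along $\gamma_1$ and geodesic (hence hyperbolic) boundary elsewhere: its holonomy is Fuchsian, so $e=\mp|\chi(\Sigma)|$, while the cusp contributes $\rro(\phi(\gamma_1))=\pm1$, giving $\text{sign}=2\chi(\Sigma)+1$ respectively $-2\chi(\Sigma)-1$ once the orientation and the parabolic type are chosen. For the remaining odd values I would argue by additivity: split $\Sigma$ along a separating curve into a pair of pants $P$ carrying the parabolic boundary and a complementary surface $\Sigma'$. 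On $P$ one builds a hyperparabolic representation whose two gluing curves are hyperbolic, whose curve $\gamma_1$ is parabolic of a prescribed type, with Euler number $0$ and hence signature $\pm1$; on $\Sigma'$ one applies Goldman to realize any prescribed even signature with hyperbolic boundary matching the conjugacy class of $P$ along the gluing curve. Since the signature is additive under gluing along a curve with matching hyperbolic holonomy (the interior $\rro$-contributions cancel and the Wall correction vanishes), the glued representation has signature $(\pm1)+(\text{even})$, and letting the even part vary fills in all odd integers of $[2\chi(\Sigma),-2\chi(\Sigma)]$.

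The main obstacle is the control of the parabolic boundary. One must pin down the value of $\rro$ on each of the two parabolic conjugacy types and, more seriously, realize representations that are simultaneously parabolic of a chosen type on one curve, hyperbolic on the others, and of a prescribed relative Euler number, with the gluing conjugacy classes matched so that additivity applies. Establishing this realization lemma --- together with verifying that the signature is genuinely additive across the separating hyperbolic curve, with no anomalous Wall/Maslov correction --- is the technical heart of the argument, whereas the even case and the two extreme odd cases are comparatively immediate.
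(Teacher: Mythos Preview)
Your outline is broadly sensible but diverges from the paper's proof and has a genuine topological gap.

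\textbf{Comparison with the paper.} The paper does not use Goldman's boundary theorem as a black box. Instead it splits by genus. For $g\ge 2$ it views $\Sigma$ as a closed surface $S$ with disks removed, takes a representation of $\pi_1(S)$ realizing a prescribed Euler number (Goldman for \emph{closed} surfaces), checks it is a smooth point of $\Phi^{-1}(I)$ where $\Phi(A_i,B_i)=\prod[A_i,B_i]$, and then deforms so that each boundary holonomy moves from $I$ into $Hyp$ or $Par^{\pm}$; continuity of the Toledo invariant gives the desired signature. For $g\le 1$ it builds explicit pair-of-pants and punctured-torus blocks with signatures $0,\pm1,\pm2$ and a free hyperbolic boundary of positive trace, then assembles inductively. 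Your route---Goldman for even signatures, one surgery for odd---is more uniform in spirit but requires the matching lemma you flag as the ``technical heart''; the paper sidesteps that lemma entirely.

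\textbf{The gap.} Your decomposition ``split $\Sigma$ along a separating curve into a pair of pants $P$ \dots\ whose two gluing curves are hyperbolic'' is inconsistent: cutting along a single simple closed curve produces exactly one gluing boundary on each side. If $P$ is to be a pair of pants containing the parabolic boundary $\gamma_1$, then either $P$ also contains a second original boundary $\gamma_2$ (forcing $n\ge 2$) or you are cutting along two curves. For $n=1$ and $g\ge 2$ neither reading of your sentence works: any embedded pair of pants with $\gamma_1$ on its boundary has its other two boundaries \emph{both} interior, so you must cut along a two-component multicurve, and then $\Sigma'$ has two new boundaries whose holonomies must simultaneously match those on $P$. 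Showing that this matching can be achieved---with prescribed hyperbolic conjugacy classes on both curves and the required parabolic type on $\gamma_1$---is exactly the realization lemma you defer. Goldman's theorem, as you cite it, asserts that every Euler number is hit with hyperbolic boundary, but not that the boundary conjugacy classes can be freely prescribed; without that strengthening the gluing step is incomplete. (Also note that $\rro=0$ on parabolics of trace $-2$, so the $\pm1$ contribution comes only from the trace $+2$ parabolic classes $Par^{\pm}$.) The paper's building-block construction handles these matching constraints by always leaving a free hyperbolic boundary of positive trace on each block, which is precisely what makes the induction go through.
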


\begin{sthm}
\label{elliptic}
Let $\Sigma$ be an oriented surface with nonempty boundary of negative Euler characteristic. The values achieved by the signatures of boundary elliptic representations $\pi_1(\Sigma)\to SL(2,\br)$ are exactly: 
\begin{enumerate}
  \item all even integers between $2\chi(\Sigma)$ and $2|\chi(\Sigma)|$, if $\Sigma$ has genus $>1$, 
  \item all integers of the form $2\chi(\Sigma)+4a$, $a=0,\ldots,|\chi(\Sigma)|$, if $\Sigma$ has genus $0$,
  \item if $\Sigma$ has genus $1$ and $n$ boundary components:
  \begin{itemize}
  \item $-2$ and $2$, if $n=1$,
  \item all even integers between $2\chi(\Sigma)$ and $2|\chi(\Sigma)|$ if $n\ge 2$. 
\end{itemize}
\end{enumerate}
\end{sthm}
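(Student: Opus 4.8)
The plan is to deduce both halves of the statement from the Atiyah--Patodi--Singer--type signature formula of \cite{KPW}, specialised to $Sp(2,\br)=SL(2,\br)$ and to boundary holonomies that are elliptic. Writing $\Sigma=\Sigma_{g,n}$ with $\chi(\Sigma)=2-2g-n$, that formula expresses the signature as an interior Toledo term plus a sum of boundary $\rro$-invariants $\rho_j$, one for each boundary circle, where $\rho_j$ is an explicit function of the rotation angle $\theta_j\in(0,2\pi)$ of the $j$-th elliptic boundary holonomy. The first task is to check that the $\theta_j$-dependence of the Toledo term cancels against that of the $\rho_j$, so that the total is an \emph{even integer}; combined with the Milnor--Wood inequality \eqref{MW} this already places the signature among the even integers of $[2\chi(\Sigma),2|\chi(\Sigma)|]$. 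I would then identify this even integer with $2e(\phi)$, where $e(\phi)$ is the relative Euler number attached to the canonical lifts $\widetilde c_j\in\widetilde{SL(2,\br)}$ of the elliptic boundary holonomies (the lift of rotation number in $(0,1)$), so that the theorem reduces entirely to determining which $e(\phi)$ occur.

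For the achievability half I would produce representations explicitly. The two extreme values $2|\chi(\Sigma)|$ and $2\chi(\Sigma)$ are realised by the holonomies of the two orientations of a hyperbolic cone structure on $\Sigma$ with small cone angles, all of whose boundary holonomies are elliptic. Intermediate values are assembled from building blocks using Novikov additivity of the signature under gluing two boundary-elliptic pieces along a shared elliptic curve: a pair of pants realises exactly $\pm2$, and so does a one-holed torus (case (3)(i)). Gluing $n-2$ pairs of pants along elliptic curves and summing their $\pm2$ contributions yields precisely the values $\equiv2\chi(\Sigma)\pmod4$ in the allowed range, which is case (2); for $g\ge 1$ one further inserts one-holed-torus blocks and uses a handle to shift $e(\phi)$ by a single unit, producing every even value and hence cases (1) and (3)(ii).

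The obstruction side --- showing that no other values occur --- is where the genus enters, and I would organise it through the relation $\prod_{i=1}^{g}[\widetilde a_i,\widetilde b_i]\,\widetilde c_1\cdots\widetilde c_n=z^{e(\phi)}$ in $\widetilde{SL(2,\br)}$, with $z$ a generator of the centre. For $g=0$ this says the product of $n$ canonical elliptic lifts is central; a capping/additivity argument combined with Meyer's theorem that the signature of a \emph{closed} flat $Sp(2,\br)$-bundle is divisible by $4$ \cite{Meyer} then forces $e(\phi)\equiv\chi(\Sigma)\pmod2$, i.e.\ the step $4$ of case (2). For $g\ge 1$ each commutator factor $[\widetilde a_i,\widetilde b_i]$ has rotation number ranging over all of $[-1,1]$, which supplies the extra unit of freedom needed to realise both parities and drop the step to $2$. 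The sole exception is $g=1,\ n=1$, where the relation reduces to $[\widetilde a,\widetilde b]=z^{e}\widetilde c_1^{-1}$: a single commutator cannot cancel an elliptic lift down to the centre except in the two extreme cases, so the middle value $e=0$ is excluded and only $\pm2$ survive.

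The main obstacle is precisely this last analysis: pinning down exactly which central elements $z^{e}$ are of the form $\prod[\widetilde a_i,\widetilde b_i]\,\widetilde c_1\cdots\widetilde c_n$ with all $\widetilde c_j$ elliptic. The genus-$0$ congruence and the genus-$1$, $n=1$ exclusion both hinge on a careful bookkeeping of rotation numbers in $\widetilde{SL(2,\br)}$ --- equivalently, on Goldman's description \cite{Gold1} of the components of the relative representation variety of the one-holed torus --- matched against Meyer's $4$-divisibility. Verifying that the non-realisable values are genuinely absent, rather than merely not produced by the constructions above, in all three genus regimes simultaneously, is the delicate technical core of the proof.
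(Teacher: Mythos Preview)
Your overall architecture is sound --- split into evenness, then the genus-dependent refinement, then achievability via gluing --- and your building-block values ($\pm 2$ on a pair of pants and on $\Sigma_{1,1}$) are correct. But the two obstruction steps that you flag as ``the delicate technical core'' are genuinely incomplete, and the paper handles them by a quite different mechanism.

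\textbf{Genus $0$, mod $4$.} Your appeal to Meyer's $4$-divisibility via ``capping/additivity'' is not a proof as it stands. You cannot cap an elliptic boundary with a disk, and the natural closures (doubling, or gluing one-holed tori along each boundary circle) only yield $2\mid\mathrm{sign}(\phi)$, not the mod $4$ congruence: doubling gives a closed bundle of signature $2\,\mathrm{sign}(\phi)$, and gluing one-holed tori adds an \emph{a priori} uncontrolled sum of $\pm 2$'s. To extract $\mathrm{sign}(\phi)\equiv 2\chi(\Sigma)\pmod 4$ you would need to pin down the sign of each torus contribution in terms of the boundary angle, which is exactly the content you are trying to prove. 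The paper avoids this circularity entirely: it shows (Proposition~\ref{deformation}) that any boundary-elliptic $\phi$ on a genus-$0$ surface can be deformed, \emph{without changing $\mathrm{sign}\bmod 4$}, to a representation landing in a conjugate of $SO(2)$, by sliding the fixed points of the elliptic boundary holonomies together along geodesics and tracking how many times the trace of the residual product crosses $2$. The mod-$4$ constraint then falls out of the explicit $SO(2)$ computation (Proposition~\ref{prop-ell}).

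\textbf{Genus $1$, $n=1$.} Your sentence ``a single commutator cannot cancel an elliptic lift down to the centre except in the two extreme cases'' is an assertion, not an argument; the rotation number of a commutator in $\widetilde{SL(2,\br)}$ ranges over $(-1,1)$, which does not by itself exclude $e=0$. The paper's proof (Proposition~\ref{traces}) is again a deformation: for $\phi(A)=\alpha$, $\phi(B)=\beta$ with $[\alpha,\beta]$ elliptic, one computes $\mathrm{tr}([\alpha(\lambda),\beta])-2=-bc(\lambda-\lambda^{-1})^2$ and observes that $\alpha$ can be connected to $\pm I$ along a path on which the commutator trace never equals $2$. In the limit the representation is abelian, so $\mathrm{T}\to 0$ and $\rro\to\pm 2$, forcing $\mathrm{sign}(\phi)=\pm 2$.

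In short, the paper's obstruction side is entirely deformation-theoretic (continuity of $\mathrm{T}$, controlled wall-crossing of $\rro$), whereas your proposal is cohomological (lifts to $\widetilde G$ and Meyer). The latter route may be salvageable, but the two steps above need substantially more than what you have written.
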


The relationship between the maximal signature and the maximal Toledo invariant is subtle.
There are examples where the signature is maximal whereas the Toledo invariant is not the maximum value, $\text{rank}(G) |\chi(\Sigma)|$, see section \ref{relative}. Similarly, a maximal Toledo invariant does not imply that the signature is maximal, see subsection \ref{pair}.

\medskip

{\bf Acknowledgement}  We thank Fanny Kassel for raising the question which led to this paper.

\section{Background for the relative Euler class, the Toledo invariant and the signature}

\subsection{Relative Euler number }
\label{ellparhyp}

Let $\Sigma$ be an oriented surface with boundary with a presentation of the fundamental group $$\pi_1(\Sigma)=\langle A_1,B_1,\cdots,A_g,B_g,C_1,\cdots,C_k: [A_1,B_1]\cdots [A_g,B_g] C_1\cdots C_k=I\rangle$$ where $g$ is the genus of $\Sigma$, and $C_i$ corresponds to a boundary component of $\Sigma$.

Noncentral conjugacy classes in $SL(2,\br)$ are determined by the trace $tr:SL(2,\br)\ra \br$. Assume that $A\in SL(2,\br)$ does not belong to the center $\{\pm I\}$.
\begin{itemize}
\item If $|tr A|>2$, then $A$ is hyperbolic. The set of hyperbolic isometries is denoted by $Hyp$. In fact, $Hyp$ consists of two components, $Hyp_0=\{tr>2\}$ which is contained in a union of one-parameter subgroups, and $Hyp_1=-I Hyp_0=\{tr<-2\}$.
\item If $|tr A|=2$ (and $A\not=\pm I$), then $A$ is parabolic. The set of parabolic isometries is denoted by $Par$. There are two conjugacy classes under $PSL(2,\br)$, depending on whether elements move ideal points in the positive or negative direction on the circle at infinity of the Poincar\'e disc. In $SL(2,\br)$, they lift to 4 conjugacy classes, two of them are contained in $\{tr=2\}$, and they are denoted by $Par+, Par-$. 
\item If $|tr A|<2$, then $A$ is elliptic. The set of elliptic isometries is denoted by $Ell$.
\end{itemize}

\begin{ntn}\label{z}
Let $\tilde G$ be the universal cover of $SL(2,\br)$. Let $Z$ denote the center of $\widetilde G$, this is an infinite cyclic group. Let $\partial$ denote the circle at infinity of hyperbolic plane. Let us fix once and for all an orientation on $\partial$. The elements of $\widetilde G$ act on the universal cover $\tilde\partial$. Let $z\in Z$ be the generator which moves points of $\tilde\partial$ in the positive direction. By abuse of notation, let $Hyp_0$, $Par_0$, $Par_0^+$, $Par_0^-$ be the lifts of $Hyp_0$, $Par\cap \{tr=2\}$, $Par+$, $Par-$ in $SL(2,\br)$ to $\widetilde G$, each of whose closures contains the identity element.
One defines 
$$
Hyp_n=z^n Hyp_0.
$$
Similarly, set
$$
Par_n^\pm=z^n Par_0^\pm.
$$
\end{ntn}
Note that when $|n|$ is odd, the trace of $A\in Hyp_n\cup Par_n$ is negative, where the trace is defined after projection to $SL(2,\br)$.

\medskip

In \cite{Gold1}, W. Goldman calculated the relative Euler class of a homomorphism $\phi:\pi_1(\Sigma)\ra G=PSL(2,\br)$ which is never elliptic on the boundary. This class lies in 
$$
H^2(\Sigma,\partial\Sigma; \bz)=\bz.
$$
Since the boundary holonomy is either hyperbolic or parabolic, there is a canonical trivialization of the corresponding principal $PSL(2,\br)$-bundle along the boundary using one-parameter groups. This trivialization, along a boundary component with holonomy $h$, is given by the canonical section $f:[0,1]\ra G$ such that  $f(t)=h_t^{-1}$ where $h_t$ is a one-parameter group connecting the identity and $h=h_1$. Then it satisfies the equivariance condition
$$
f(0)=id,\ f(1)=h^{-1}id.
$$

The obstruction to extending this boundary trivialization to the whole flat principal $PSL(2,\br)$ bundle lies in $H^2(\Sigma,\partial\Sigma; \bz)=\bz$. The explicit way to calculate this integer is as follows.

Let $\widetilde{\phi(C_i)}$ denote the unique lift of $\phi(C_i)$ which belongs to $\{I\}\cup Hyp_0\cup Par_0\subset \tilde G$. Then for arbitrary lifts $\widetilde{\phi(A_i)},\widetilde{\phi(B_i)}$ in $\tilde G$
\begin{eqnarray}\label{relation}
\prod_{i=1}^g[\widetilde{\phi(A_i)},\widetilde{\phi(B_i)}]\prod_{i=1}^k \widetilde{\phi(C_j)}=z^m
\end{eqnarray}
where, recall Notation \ref{z}, $z$ is the chosen generator of the center of $\tilde G$ lifting $-I$. This integer $m$ is the relative Euler class of $\phi$. Note that the choices
\begin{eqnarray}\label{lift}
\{\widetilde{\phi(A_i)},\widetilde{\phi(B_i)},\widetilde{\phi(C_j)}_{j=1}^{k-1}, z^{-m}\widetilde{\phi(C_k)}\}
\end{eqnarray} determine a lift of $\phi$ to a group homomorphism $\tilde\phi:\pi_1(\Sigma)\ra\tilde G$.

\subsection{Toledo invariant}
Let $\phi:\pi_1(\Sigma)\ra G$ be a homomorphism where $G$ is a Lie group of Hermitian type, i.e., it is connected, semisimple with finite center and no compact factors, and the associated symmetric space is Hermitian with the minimal holomorphic sectional curvature equal to $-1$.

Let $\kappa\in H^2_{c,b}(G, \br)$ be the bounded cohomology class defined by the K\"ahler form $\omega$ by
\begin{equation}\label{bounded cocyle}
  c(g_0,g_1,g_2)=\frac{1}{2\pi}\int_{\triangle(g_0x, g_1x, g_2x)} \omega,
\end{equation}
where $x$ is an arbitrary point and $\triangle(a,b,c)$ is the oriented geodesic triangle in the symmetric space with vertices $a,b,c$.

 By Gromov's isomorphism \cite{Gromov}, one has 
$$
\phi_b^*(\kappa)\in \mathrm{H}^2_b(\pi_1(\Sigma);\mb{R})\cong \mathrm{H}^2_b(\Sigma;\mb{R}).
$$
The canonical map $j_{\p\Sigma}:\mathrm{H}^2_b(\Sigma,\p\Sigma;\mb{R})\to \mathrm{H}^2_b(\Sigma;\mb{R})$ from singular bounded cohomology relative to $\p\Sigma$ to singular bounded cohomology is an isomorphism. Then the Toledo invariant is defined by
$$
\op{T}(\Sigma,\phi)=\langle j^{-1}_{\p\Sigma}\phi_b^*(\kappa),[\Sigma,\p\Sigma]\rangle,$$
where   $j^{-1}_{\p\Sigma}\phi_b^*(\kappa)$ is considered as an ordinary relative cohomology class and $[\Sigma,\p\Sigma]\in \mr{H}_2(\Sigma,\p\Sigma;\mb{Z})\cong\mb{Z}$ denotes the relative fundamental class. 

Note that if $\text{Im}(\phi)$ is contained in an amenable group $H$, for example a Borel subgroup of $G$, then $\phi^*_b(\kappa)=0$ as the following diagram explains,
\begin{equation*}
   \kappa\in H^2_{c,b}(G;\br)\ra H^2_{c,b}(H;\br)=0 \xrightarrow{\phi^*_b} H^2_b(\pi_1(\Sigma);\br).
\end{equation*}
In this case, the Toledo invariant vanishes.

\subsection{Rotation numbers and Toledo invariant}
\label{rot}

Here, we describe an alternate expression of the Toledo invariant, due to M. Burger, A. Iozzi and A. Wienhard, \cite{BIW}. These authors associate a rotation number function with every Lie group $G$ of Hermitian type and every bounded cohomology class $\kappa\in \hat H^2_{cb}(G,\bz)$. The rotation number is a conjugacy invariant continuous function $Rot_\kappa:G\ra \br/\bz$ which extends a certain group homomorphism $u_\kappa:K\ra \br/\bz$, where $K$ is a maximal compact subgroup of $G$. 

The following properties characterize $Rot_\kappa$:
\begin{itemize}
  \item Given an Iwasawa decomposition $G=KAN$, $Rot_\kappa$ vanishes on $AN$. 
  \item If $g\in G$ has Jordan decomposition $g=g_eg_hg_n$, then $Rot_\kappa(g)=Rot_\kappa(g_e)=u_\kappa(k)$ where $g_e$ is conjugate to $k\in K$.
\end{itemize}
Let $\widetilde{Rot_\kappa}:\tilde G\ra\br$ denote the continuous lift of $Rot_\kappa$ to universal covers, such that $\widetilde{Rot_\kappa}(Id)=0$.

\medskip

In \cite[Theorem 12]{BIW},  the following formula is given. Let $\phi:\pi_1(\Sigma)\ra G$ be a representation. Let us use the standard presentation of $\pi_1(\Sigma)$, with $C_1,\ldots,C_k$ representing boundary components. Let $\tilde\phi\in\text{Hom}(\pi_1(\Sigma),\tilde G)$ be a lift of $\phi$ to the universal cover, then
$$
\op{T}_\kappa(\phi)=-\sum_{i=1}^{k} \widetilde{Rot_\kappa}(\tilde\phi(C_j)).
$$

We need this formula only for $G=Sp(2,\br)$, and $\kappa$ the class defined by the K\"ahler form on hyperbolic plane. 
In fact, $\kappa$ is not integral; however, $2\kappa$ is integral, \cite{KPW}, hence one can define the rotation number with $2\kappa$ and then divide everything by 2, this makes no change in the final formula \ref{Rot}. According to \cite[Remark 7.11 (2)]{BIW},  $\kappa$ is associated with the homomorphism $u_\kappa:U(1)\to\br/\bz$ such that $ u_\kappa(k)=k^2$. 
Let us fix a point at infinity $\infty\in\partial$ on the circle at infinity of hyperbolic plane. Then the map $k\mapsto k^2(\infty)$ moves in the positive direction.

Therefore the lift $\widetilde{u_\kappa}:\widetilde{U(1)}\to\br$ takes values $1$ on the generator $z$ of the center of $\widetilde{Sp(2,\br)}$ specified in Notation \ref{z}. In particular, $\widetilde{Rot_\kappa}(z)=1$. Since $\widetilde{Rot_\kappa}$ is equivariant under the deck transformations $Z$, for all $\tilde g\in \widetilde{Sp(2,\br)}$ and $j\in\mathbb{Z}$, 
\begin{equation}\label{Rot}
\widetilde{Rot_\kappa}(z^j\tilde g)=j+\widetilde{Rot_\kappa}(\tilde g).
\end{equation}

\begin{prop}
\label{toleul}
When $n=1$, i.e. for the group $Sp(2,\br)$, the Toledo invariant and the relative Euler number of representations which are nonelliptic on the boundary, coincide.
\end{prop}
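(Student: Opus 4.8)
The plan is to compute both invariants by passing to the universal cover $\tilde G$ of $Sp(2,\br)=SL(2,\br)$ and matching them term by term along the boundary. The relative Euler number $m$ is read off from \eqref{relation} using the distinguished lifts $\widetilde{\phi(C_j)}\in\{I\}\cup Hyp_0\cup Par_0$, while the Toledo invariant is computed by the Burger--Iozzi--Wienhard formula $\op{T}_\kappa(\phi)=-\sum_j\widetilde{Rot_\kappa}(\tilde\phi(C_j))$ applied to the genuine homomorphism lift $\tilde\phi$ furnished by \eqref{lift}. The entire argument then reduces to understanding the value of $\widetilde{Rot_\kappa}$ on these particular lifts.

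The first step is to show that $\widetilde{Rot_\kappa}$ vanishes identically on $\{I\}\cup Hyp_0\cup Par_0$. For this I would use the characterizing properties of $Rot_\kappa$ listed above: every element of $Hyp_0$ is conjugate in $SL(2,\br)$ to a diagonal element of the $A$-factor, and every element of $Par_0$ to a unipotent element of the $N$-factor of an Iwasawa decomposition $G=KAN$; since $Rot_\kappa$ is a conjugacy invariant that vanishes on $AN$ (equivalently, these elements have trivial elliptic Jordan part), it is $0\in\br/\bz$ on all of $Hyp_0\cup Par_0$ and on $I$. Passing to the lift, $\widetilde{Rot_\kappa}$ is then $\bz$-valued on this set, being a continuous lift of the constant $0$; because $Hyp_0$ and $Par_0$ are connected with closures containing the identity, where $\widetilde{Rot_\kappa}(I)=0$ by normalization, global continuity forces $\widetilde{Rot_\kappa}\equiv 0$ on $\{I\}\cup Hyp_0\cup Par_0$. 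This is precisely where the nonelliptic hypothesis on the boundary enters, since for elliptic boundary holonomy no canonical lift with vanishing rotation number is available.

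With this in hand I would substitute the lift \eqref{lift} into the Burger--Iozzi--Wienhard formula. For $j=1,\dots,k-1$ we have $\tilde\phi(C_j)=\widetilde{\phi(C_j)}$, each contributing $0$, while the last factor is $\tilde\phi(C_k)=z^{-m}\widetilde{\phi(C_k)}$, and the equivariance relation \eqref{Rot} yields $\widetilde{Rot_\kappa}(z^{-m}\widetilde{\phi(C_k)})=-m+\widetilde{Rot_\kappa}(\widetilde{\phi(C_k)})=-m$. Summing gives $\sum_j\widetilde{Rot_\kappa}(\tilde\phi(C_j))=-m$, hence $\op{T}_\kappa(\phi)=-(-m)=m$, the relative Euler number. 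I expect the main obstacle to be bookkeeping rather than conceptual: one must check that \eqref{lift} genuinely defines a homomorphism to $\tilde G$ (so that the formula applies), that the formula is independent of the chosen lift (a change $\tilde\phi(C_j)\mapsto z^{n_j}\tilde\phi(C_j)$ preserving the product forces $\sum_j n_j=0$, leaving the sum unchanged), and that every sign convention --- the orientation of $\partial$, the generator $z$ moving $\tilde\partial$ positively, and the normalization $\widetilde{Rot_\kappa}(z)=1$ coming from $u_\kappa(k)=k^2$ --- is consistent throughout, so that the two occurrences of $m$ carry the same sign.
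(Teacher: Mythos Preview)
Your proposal is correct and follows essentially the same approach as the paper: use the lift \eqref{lift}, show $\widetilde{Rot_\kappa}$ vanishes on $\{I\}\cup Hyp_0\cup Par_0$, and then read off the contribution $-m$ from the last boundary term via \eqref{Rot}. The only cosmetic difference is that the paper argues vanishing via the $1$-parameter subgroup through each element (along which $Rot_\kappa\equiv 0$, hence $\widetilde{Rot_\kappa}$ is constant equal to its value at $I$), whereas you invoke connectedness of $Hyp_0$ and $Par_0$ directly; these are equivalent.
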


Indeed, given a representation $\phi:\pi_{1}(\Sigma)\ra Sp(2,\br)$, let us use the lift $\tilde\phi$ mentioned in Equation \ref{lift}. If $j=1,\cdots, k-1$, the lifts $\tilde\phi(C_i)=\widetilde{\phi(C_i)}$ fit with W. Goldman's calculation, they belong to $Hyp_0\cup Par_0$. Each of them lies on a $1$-parameter subgroup which lifts a hyperbolic or parabolic $1$-parameter subgroup of $Sp(2,\br)$, along which $Rot_\kappa$ is constant and equal to $0$. Therefore
$$
\widetilde{Rot_\kappa}(\tilde\phi(C_j))=\widetilde{Rot_\kappa}(Id)=0.
$$
On the other hand, for $j=k$, $\tilde\phi(C_k)=z^{-m}\widetilde{\phi(C_k)}$, so 
$$
\widetilde{Rot_\kappa}(\tilde\phi(C_k))=\widetilde{Rot_\kappa}(z^{-m}\widetilde{\phi(C_k)})=-m,
$$
and $\op{T}(\phi)=m$. This shows that the Toledo invariant and the relative Euler number coincide.

\subsection{Signature of flat bundles and rho invariant}

Let $\phi:\pi_1(\Sigma)\ra Sp(2n,\br)$ be a representation, and $E=(\br^{2n},\Omega)$ a symplectic vector space. Denote by $\mathcal{E}=\widetilde\Sigma\times_\phi E$ the corresponding flat bundle. The signature of $(\cal E,\Omega)$ is defined as the signature of the symmetric quadratic form
$$
Q(\cdot, \cdot)=\int_\Sigma \Omega ( \cdot \cup \cdot)
$$
on $\text{Im}(H^1(\Sigma, \cal E)\ra H^1(\Sigma,\partial\Sigma, \cal E))$.

Applying Atiyah-Patodi-Singer theory \cite{APS,APSII}, the signature is shown to be (\cite[Theorem 4]{KPW})
\begin{equation}\label{formula}\text{sign}(\phi)=2 \op{T}+\rro_\phi(\p \Sigma),
\end{equation}

The rho invariant $\rro$ is a discontinuous class function on $Sp(2n,\br)$, and can be explicitly calculated \cite{KPW}. One can do it by hand for $Sp(2,\br)=SL(2,\br)$ \cite[Appendix]{KPW}.

We list out some properties of the signature, which are useful for us.
\begin{itemize}
\item If $\phi:\pi_1(\Sigma)\ra Sp(E,\Omega)$ splits into $E=\oplus E_i$, then
$$
\text{sign}(\phi,E)=\sum \text{sign}(\phi,E_i).
$$
\item Let $L\in GL(E)$ be an involution such that $L^*\Omega=-\Omega$. Then $\iota:=Ad_L$ induces an automorphism of $Sp(2n,\br)$ (if $L$ is well chosen, it induces an anti-holomorphic isometry of the symmetric space $Sp(2n,\br)/U(n)$). For all $\phi$,
\begin{equation}\label{anti-holomorphic}
\text{sign}(\iota\circ\phi)=- \text{sign}(\phi).
\end{equation}

\item  If $\Sigma=\Sigma_1\cup_\gamma \Sigma_2$ consists of two subsurfaces glued along a simple closed curve $\gamma$, then
$$\text{sign}(\phi,\Sigma)=\text{sign}(\phi|_{\pi_1(\Sigma_1)},\Sigma_1)+ \text{sign}(\phi|_{\pi_1(\Sigma_2)},\Sigma_2).$$
\end{itemize}
According to \cite[Theorem 4]{KPW}, the signature satisfies the Milnor-Wood inequality
\begin{equation}\label{formula2}
|\text{sign}(\cal E, \Omega)|\leq 2n |\chi(\Sigma)|.
\end{equation}

Here is a table of values of the $\rro$ invariant for the matrix $\begin{pmatrix}
\lambda & \mu\\
0    & \lambda^{-1}
\end{pmatrix}$ in $Sp(2,\br)$. Here $\theta\in (0,\pi)\cup (\pi,2\pi)$.

\begin{center}
\begin{align}\label{etasp}
\begin{tabular}{| c | c | }
\hline
     $\lambda,\mu$   & $\rro_{\phi_0}(S^1)$\\
\hline
   $\lambda\not\in S^1$  & $0$\\
\hline
   $\lambda\in S^1\backslash\{\pm1\}$  &$2(1-\frac{\theta}{\pi})$\\
\hline
    $\mu=0$   & $0$\\
\hline
$\lambda=1, \mu>0$  & $-1$\\
\hline
 $\lambda=1, \mu<0$  & $1$\\
\hline
$\lambda=-1$ & $0$\\
\hline
\end{tabular}	
\end{align}
\end{center}

Thus we see that, when $n=1$, the contribution to the $\rro$ invariant of a boundary component with parabolic holonomy $C$ is $-1$ if $C\in Par+$ and $1$ if $C\in Par-$.

\section{The relative representation variety}\label{relative}
 
 Given a sequence of conjugacy classes $\cal C=(\cal C_1,\cdots,\cal C_k)$ in $Sp(2p,\br)$, the relative representation variety
 $$\text{Hom}^{\cal C}(\pi_1(\Sigma), Sp(2p,\br)):=\{\phi\in \text{Hom}(\pi_1(\Sigma), Sp(2p,\br)):\phi(c_i)\in\cal C_i, 1\leq i \leq k\}$$
is a real semialgebraic set. Note that $\rro$ is constant on $\text{Hom}^{\cal C}(\pi_1(\Sigma), Sp(2p,\br))$, and hence $\text{sign}=2\op{T}+\rro$ is continuous since $\op{T}$ is continuous. On each connected component of $\text{Hom}^{\cal C}(\pi_1(\Sigma), Sp(2p,\br))$, the Toledo invariant is constant, hence the signature is constant as well.

When all $\cal C_i$ are hyperbolic, $\rro=0$, hence $\text{sign}(\phi)=2\op{T}(\phi)$.
Sometimes, the signature behaves differently from the Toledo invariant.
Let us illustrate this for $Sp(2,\br)$. If $\phi$ induces a complete hyperbolic structure with geodesic boundary, then both the Toledo invariant and $\text{sign}(\phi)=2\op{T}(\phi)=2|\chi(\Sigma)|$ are maximal.

The most interesting case is when $\cal C_i$ are classes of elliptic elements.
If $\phi$ induces a hyperbolic surface $S$ with cone singularities with cone angles $0<\phi_i<2\pi$, then 
 \begin{align*}
  \op{T}(\Sigma,\phi)&=\frac{1}{2\pi}\text{Area}(S)=-\left(\chi(S)+\frac{1}{2\pi}\sum_{i=1}^k(\phi_i-2\pi)\right)\\
  &=-\chi(\Sigma)-\sum_{i=1}^k\frac{\phi_i}{2\pi}<|\chi(\Sigma)|.
\end{align*}
But $\text{sign}(\phi)=2\op{T}(\phi)+\rro(\partial\Sigma)=2|\chi(\Sigma)|$ is maximal.
See \cite[Remark 8.7]{KPW} for the calculation of $\rro$. 
The criterion for the existence of a hyperbolic surface with cone singularities is described in \cite{Mc}.

Curiously enough, the signature does not distinguish between a hyperbolic structure with geodesic boundary and a hyperbolic structure with cone singularities whereas the Toledo invariant does.

When $\phi$ induces a hyperbolic surface with parabolic cusps,  the Toledo invariant is maximal $-\chi(\Sigma)$ (\cite[Theorem 3]{BIW}), but the signature is not necessarily maximal. The simplest example is a pair of pants with one parabolic boundary ($\rro=-1$), and two hyperbolic boundaries, which will arise again as a signature $1$ building block in subsection \ref{pair}. 
Hence the signature distinguishes between a hyperbolic surface with geodesic boundaries and a hyperbolic surface with some parabolic boundaries whereas the Toledo invariant does not.

\section{Proof of Theorem \ref{main}}\label{sec-rep}

Since signatures add up in direct sums of representations, the $p=1$ case of Theorem \ref{main} implies the general case using the embedding $SL(2,\br)^{\times p}<Sp(2p,\br)$. The same argument applies to $U(p,p)$, see Remark \ref{upp}. So from now on, we stick to homomorphisms to $G=SL(2,\br)$. Representations to $SO(2)$ turn out to do a lot of mileage. In the next Proposition, item (2) has been included for future reference (Section \ref{boundaryelliptic}), where representations which are trivial on some boundary component are excluded.

\begin{prop}\label{prop-ell}
Let $\Sigma$ be an oriented surface with $n$ boundary components, such that $\chi(\Sigma)\leq 0$. Let $\theta\in(0,\pi)\cup(\pi,2\pi)$ be given.
\begin{enumerate}
  \item The values achieved by the signatures of $SO(2)$-representations with prescribed conjugacy class $\theta$ at some boundary component are exactly all even integers between $4-2n$ and $2n-4$. In fact, one can prescribe that at most one boundary holonomy be trivial, all others being elliptic.
  \item The values achieved by the signatures of boundary elliptic $SO(2)$-represen\-tations with prescribed conjugacy class $\theta$ at some boundary component are exactly all integers of the form $2n-4a$, $a=1,\ldots,n-1$.
\end{enumerate}
\end{prop}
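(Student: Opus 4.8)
The plan is to reduce everything to the signature formula \eqref{formula}, $\text{sign}(\phi)=2\op{T}+\rro_\phi(\p\Sigma)$, for representations whose image lies in the compact abelian group $SO(2)\subset SL(2,\br)$. First I would show that the Toledo invariant vanishes for every such $\phi$: since $SO(2)$ is compact, hence amenable, the pullback $\phi_b^*(\kappa)$ is zero exactly as in the amenable-subgroup diagram of Subsection \ref{rot}, so $\op{T}=0$. One can also see this directly from the rotation-number formula: any lift $\tilde\phi$ necessarily sends all generators into the connected abelian subgroup $\widetilde{SO(2)}\subset\tilde G$, so all commutators of lifts die, and writing $\tilde\alpha_j$ for the lifted rotation angles (with $\sum_j\tilde\alpha_j=0$ by the surface relation) and using $\widetilde{Rot_\kappa}(z)=1$ with $z$ the half-turn, one gets $\op{T}=-\sum_j\widetilde{Rot_\kappa}(\tilde\phi(C_j))=-\tfrac1\pi\sum_j\tilde\alpha_j=0$. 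Consequently $\text{sign}(\phi)=\rro_\phi(\p\Sigma)=\sum_{j=1}^n\rro(\phi(C_j))$, a sum of purely local boundary terms read off from Table \eqref{etasp}.

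Next I would record those terms. Each $\phi(C_j)$ is a rotation by an angle $\theta_j\in[0,2\pi)$; it is elliptic iff $\theta_j\in(0,\pi)\cup(\pi,2\pi)$, in which case $\rro(\phi(C_j))=2(1-\theta_j/\pi)$, while $\rro(\pm I)=0$ (the cases $\theta_j=0$ and $\theta_j=\pi$). Because $SO(2)$ is abelian the commutators in the surface relation vanish, so an $SO(2)$-representation is precisely a choice of boundary angles subject to the single constraint $\sum_{j=1}^n\theta_j\equiv 0\pmod{2\pi}$; conversely any such tuple defines a representation (send the $A_i,B_i$ to $I$), since $\pi_1(\Sigma)$ is free. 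Letting $e,t,q$ count the elliptic, trivial, and $-I$ boundaries, the constraint reads $\sum_{\text{elliptic}}\theta_j=\pi(2\ell-q)$ for an integer $\ell$, and substituting gives the clean identity
\begin{equation*}
\text{sign}(\phi)=2e-\tfrac{2}{\pi}\sum_{\text{elliptic}}\theta_j=2n-2t-4\ell.
\end{equation*}
In particular the signature is always even, and bounding $\ell$ against the admissible range $0<\sum_{\text{elliptic}}\theta_j<2\pi e$ confines it to $[4-2n,2n-4]$; this is the ``only if'' direction, and for positive genus it is strictly sharper than the general Milnor--Wood bound \eqref{formula2}.

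For the ``if'' direction I would exhibit explicit angle tuples. For part (2) all boundaries are elliptic, so $t=q=0$ and $\text{sign}=2n-4\ell$; choosing angles in $(0,2\pi)\setminus\{\pi\}$ summing to $2\pi a$ (with one of them set to the prescribed $\theta$) realizes $2n-4a$ for each $a=1,\dots,n-1$, and these step-$4$ values are all that occur. For part (1) the even integers in the complementary residue class modulo $4$ are produced by allowing \emph{one} trivial boundary: taking $t=1$ with $n-1$ elliptic angles summing to $2\pi\ell'$ gives $\text{sign}=2(n-1)-4\ell'$, and the union of the $t=0$ and $t=1$ families covers \emph{all} even integers in $[4-2n,2n-4]$. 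In each case one angle is fixed to $\theta$ and the remaining ones are distributed to meet the required sum while avoiding the forbidden value $\pi$ (a codimension-one condition, trivial to arrange except in the smallest cases $n=2,3$, which are checked directly).

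The main obstacle — really the only subtle point — is the step/parity issue in part (1): the boundary-elliptic formula $2n-4\ell$ only produces signatures in a single residue class modulo $4$, so reaching \emph{every} even integer forces use of the degenerate value $I$, whose $\rro$-contribution vanishes yet which shifts the angle-sum constraint by exactly the missing amount. Making this precise while keeping the prescribed class $\theta$ genuinely elliptic, and verifying that no auxiliary angle is accidentally $0$ or $\pi$, is the one place requiring care; the remainder is bookkeeping with Table \eqref{etasp} and the vanishing of $\op{T}$.
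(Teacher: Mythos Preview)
Your proposal is correct and follows essentially the same route as the paper: vanishing of the Toledo invariant for $SO(2)$-representations (the paper cites \cite[Remark 8.3]{KPW}, you use amenability) reduces the signature to $\rro_\phi(\partial\Sigma)$, which is computed from the boundary angles via Table \eqref{etasp} and the abelian constraint $\sum_j\theta_j\equiv 0\pmod{2\pi}$, yielding your formula $\text{sign}=2n-2t-4\ell$ (the paper writes it as $2m-4a$ with $m=n-t$). Your identification of the mod-$4$ residue shift via a single trivial boundary is exactly the mechanism behind the paper's explicit angle choices in \eqref{eqn1}--\eqref{eqn3}.
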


\begin{proof}
Representations $\phi:\pi_1(\Sigma)\to SO(2)$ have vanishing Toledo invariant, by \cite[Remark 8.3]{KPW}, hence
\begin{equation*}
  \mr{sign}(\phi)=\rro_\phi(\p\Sigma).
\end{equation*}

Let $A_i$, $i=1,\ldots,g$, $B_j$, $j=1,\ldots,g$, and $C_k$, $k=1,\ldots,n$ be the standard generators of $\pi_1(\Sigma)$, subject to the single relator $\prod_{i=1}^{g}[A_i,B_i]\prod_{k=1}^{n}C_k=I$. Since $SO(2)$ is abelian, representations are in $1-1$ correspondance with $2g+n$-tuples of angles $\theta_i,\theta_j,\theta_k\in[0,2\pi)$, subject to the single relation $\sum_{k=1}^{n}\theta_k\equiv 0$ mod $2\pi$. We note that $\theta_i,\theta_j$ will play no role at all. 

Given a representation $\phi:\pi_1(\Sigma)\to SO(2)$, let $a$ be the integer such that $\sum_{k=1}^{n}\theta_k =2\pi a$. If the representation is boundary elliptic, then $\theta_k\notin\{0,\pi\}$ for all $k$, and
$$
\rro_\phi(\p\Sigma)=\sum_{k=1}^{n}2(1-\frac{\theta_k}{\pi})=2n-4a,\quad 0<a<n.
$$
Without the ellipticity requirement, $\theta_k$ can take the values $0$ and $\pi$. The occurrence of $\pi$ does not affect the expression of $\rro_\phi(\p\Sigma)$. Thus $\rro_\phi(\p\Sigma)=2m-4a$ where $m$ is the number of nonzero $\theta_k$'s, and $0<a<m$. This gives more even numbers, but still between $4-2n$ and $2n-4$.

Conversely, given $\theta\in(0,\pi)\cup(\pi,2\pi)$, we first choose $\theta_1=\theta$ and all other $\theta_k=\frac{2\pi a-\theta}{n-1}$. This allows to achieve every $2n-4a$ for $0<a<n$ as the signature of a boundary elliptic $SO(2)$-representation with a prescribed elliptic holonomy on one boundary component.

Secondly, we choose $\theta_1=\theta$,  and for any  $0\leq |m| \leq n-2$,\begin{equation}\label{eqn1}
  \theta_1+\cdots+\theta_{|m|+2}= \begin{cases}
 	2\pi& \text{ if } m\geq 0 \\
 	2(1+|m|)\pi&\text{ if } m\leq 0
 \end{cases},\quad \theta_i\neq 0,\pi\text{ for }1\leq i\leq |m|+2,
\end{equation}
and $\{\theta_{|m|+3},\cdots,\theta_{n}\}$ satisfy 
\begin{equation}\label{eqn2}
\theta_{|m|+3},\cdots,\theta_{n-1}\neq 0,\pi\text{ and }  \theta_{|m|+2i+1}+\theta_{|m|+2i+2}=2\pi,\,i\geq 1,
\end{equation}
where  $\theta_{n+1}:=2\pi$. By the choice of $\theta_i$, we know that
$\phi(C_1),\cdots,\phi(C_{n-1})$ are elliptic, and $\phi(C_n)$ is elliptic or $I$. In particular, $\phi(C_n)=I$ if and only if $n-|m|$ is odd. Moreover,
\begin{equation}\label{eqn3}
  \sum_{i=1}^n\mathrm{sgn}(\theta_i)-\frac{1}{\pi}\sum_{i=1}^n\theta_i=|m|+2-\frac{1}{\pi}( \theta_1+\cdots+\theta_{|m|+2})=m.
\end{equation}
Hence  $$\rro_\phi(\p\Sigma)=\sum_{i=1}^n2(\mathrm{sgn}(\theta_i)-\frac{\theta_i}{\pi})=2\left(\sum_{i=1}^n\mathrm{sgn}(\theta_i)-\frac{1}{\pi}\sum_{i=1}^n\theta_i\right)=2m.$$
\end{proof}

\subsection{Achieving odd signatures in genus $0$}

Let $\Sigma$ be an oriented surface of genus zero, with $n\geq 3$ boundary components. Proposition \ref{prop-ell} shows that all even numbers in $[-2|\chi(\Sigma)|,2|\chi(\Sigma)|]$ arise as signatures of $SO(2)$ representations on $\Sigma$. One can even prescribe the conjugacy class of the holonomy of one boundary component. We now show that all odd numbers in this interval can also be achieved.

First, if $n=3$, there exist $SL(2,\br)$-representations $\phi_1$ and $\phi_2$ with two elliptic and one parabolic boundary holonomies and signatures $-1$ and $1$ respectively. Indeed, we consider the following representations:
\begin{equation}\label{eqn5}
  \phi_-(d_1)=\begin{pmatrix}
  0& -1\\
  1&0 
\end{pmatrix},\quad \phi_-(d_2)=\begin{pmatrix}
  1&1 \\
  0& 1
\end{pmatrix}.
\end{equation}
Then 
\begin{equation}\label{eqn6}
  \phi_-(d_1)\phi_-(d_2)=\begin{pmatrix}
  0&-1 \\
  1&1 
\end{pmatrix}.
\end{equation}
From \cite[(5.4) and Section 6]{Atiyah}, one has
\begin{equation*}
  \mathrm{sign}(\phi_-)=-\frac{4}{3}-(-1)-\frac{2}{3}=-1. 
\end{equation*}
Combining with \cite[(5.6)]{Atiyah}, one has
$$\mathrm{sign}(\phi_+)=1\text{ for }\phi_+(d_1)=\begin{pmatrix}
  0&1\\
  -1&0 
\end{pmatrix}\text{ and }\phi_+(d_2)=\begin{pmatrix}
  1&-1 \\
  0& 1
\end{pmatrix}.
$$

\begin{prop}\label{prop-ell1}
If $\Sigma$ has genus zero, then every integer in $[-2|\chi(\Sigma)|,2|\chi(\Sigma)|]$ is the signature of some representation $\pi_1(\Sigma)\to SL(2,\mb{R})$ whose boundary holonomies are elliptic except at most one of them which could be parabolic or trivial.
\end{prop}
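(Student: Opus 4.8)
The even integers are already in hand: by Proposition~\ref{prop-ell}(1), every even integer in $[4-2n,2n-4]=[-2|\chi(\Sigma)|,2|\chi(\Sigma)|]$ is the signature of an $SO(2)$-representation whose boundary holonomies are all elliptic apart from at most one trivial one. So the entire content of the statement lies in realizing the \emph{odd} values, and for these the plan is to combine the signature $\pm 1$ pairs of pants $\phi_\pm$ constructed above with the additivity of the signature under gluing along a simple closed curve.

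I would write $\Sigma$ as a union $\Sigma=P\cup_\gamma\Sigma'$ of a pair of pants $P$ and a genus-zero surface $\Sigma'$ with $n-1$ boundary components, glued along a single curve $\gamma$. On $P$ I put $\phi_s$ for a suitable sign $s\in\{+1,-1\}$; its three boundary holonomies are one parabolic and two elliptic elements, and I arrange $\gamma$ to be the boundary carrying the holonomy $\phi_s(d_1)$, the rotation by $\pm\pi/2$, which already lies in $SO(2)$. On $\Sigma'$ I put a boundary-elliptic $SO(2)$-representation of signature $m'$ whose holonomy along the gluing boundary is prescribed, via Proposition~\ref{prop-ell}(2), to be that same rotation (up to orientation); after conjugating the representation of $\Sigma'$ if necessary, the two holonomies along $\gamma$ coincide and define a representation $\phi$ of $\pi_1(\Sigma)$. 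Gluing additivity then gives $\mathrm{sign}(\phi)=s+m'$, while the boundary holonomies of $\Sigma$ are the $n-2$ elliptic ones inherited from $\Sigma'$ together with the remaining elliptic and parabolic holonomies of $P$: all elliptic save exactly one parabolic, as required. Given an odd target $m$, the task is therefore to realize $m=s+m'$.

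The delicate point, which I expect to be the main obstacle, is purely arithmetic, and stems from the constraint that $\Sigma'$ carry \emph{no} trivial boundary, the single exceptional boundary of $\Sigma$ being already spent on the parabolic of $P$. Insisting that every boundary holonomy of $\Sigma'$ be elliptic restricts $m'$, by Proposition~\ref{prop-ell}(2), to the values $2(n-1)-4a$ with $a=1,\dots,n-2$, i.e. to the even integers in $[6-2n,2n-6]$ congruent to $2(n-1)$ modulo $4$. I claim that for every odd $m\in[4-2n,2n-4]$ exactly one of $m-1$ and $m+1$ is such a value, and this is what selects $s$. The congruence is automatic: $m-1$ and $m+1$ differ by $2$ and hence occupy the two distinct even residues modulo $4$, exactly one of which is $\equiv 2(n-1)$. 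The range is automatic too, since the only even integers of the correct residue in the enlarged interval $[3-2n,2n-3]$ are already those of $[6-2n,2n-6]$: the borderline values $\pm(2n-4)$ carry the opposite residue $\equiv 2(n-1)+2 \pmod 4$ and so are never selected. Thus the congruence-selected $m'$ always lands in the admissible range, the symmetry $\mathrm{sign}(\iota\circ\phi)=-\mathrm{sign}(\phi)$ of \eqref{anti-holomorphic} reduces negative $m$ to positive $m$, and the degenerate base case $n=3$ is exactly $\phi_\pm$ with $m'=0$.
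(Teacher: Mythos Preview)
Your proof is correct and follows essentially the same approach as the paper: split off a pair of pants carrying $\phi_\pm$ and glue it to a boundary-elliptic $SO(2)$-representation on the remaining genus-zero piece $\Sigma'$ furnished by Proposition~\ref{prop-ell}. Your residue-mod-$4$ argument for selecting the sign $s$ (observing that the borderline values $\pm(2n-4)$ carry the opposite residue and so are never the one selected) is in fact a cleaner bookkeeping than the paper's case split, and it makes transparent why the required $m'$ always lands in the range $[6-2n,2n-6]$ available on $\Sigma'$ via Proposition~\ref{prop-ell}(2).
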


\begin{proof}
Proposition \ref{prop-ell} shows that all even numbers in $[-2|\chi(\Sigma)|,2|\chi(\Sigma)|]$ arise as signatures of $SO(2)$ representations of $\pi_1(\Sigma)$ which are elliptic except at most one which could be trivial. 

Let $m$ be an odd integer between $0$ and $2|\chi(\Sigma)|=2n-4$. If $m\equiv 2n+1$ mod $4$, there exists an integer $a$ such that $m=2n-4a+1$. Then $2\le a\le \frac{2n+1}{4}<n$. If $m\equiv 2n-1$ mod $4$, there exists an integer $a$ such that $m=2n-4a-1$. Then $1\le a\le \frac{2n-1}{4}<n$ again. 

We view $\Sigma=\Sigma'\#\Sigma''$ as the gluing of a genus $0$ surface $\Sigma'$ with $n-1$ boundary components, and a pair of pants $\Sigma''$, along a common boundary component $c$. According to Proposition \ref{prop-ell}, $\Sigma'$ admits a representation $\phi'$ with signature $2n-4a$, with holonomy along $c$ which can be prescribed to be conjugate either to $\phi_+(d_1)^{-1}$ or to $\phi_-(d_1)^{-1}$. The representation of $\pi_1(\Sigma)$ obtained by gluing $\phi'$ and $\phi_+$ (resp. $\phi_-$) has signature $2n-4a+1=m$ (resp. $2n-4a-1=m$), and all boundary components have elliptic holonomies but one which is parabolic. 

Conjugating with an involution of $SL(2,\br)$ allows to achieve all negative integers $\ge 2\chi(\Sigma)$.
\end{proof}

\subsection{Achieving odd signatures, higher genus}
 
Let $\Sigma_{1,1}$ be an oriented surface of genus one with one boundary component. For every $\theta\in(0,\pi)$, there exists a hyperbolic structure on $\Sigma_{1,1}$ with a cone singularity of angle $2\pi-2\theta$. Its holonomy representation $\psi_\theta:\pi_1(\Sigma_{1,1})\to PSL(2,\mb{R})$ admits a lift to $SL(2,\mb{R})$ whose value on the (oriented) boundary component is the matrix $\begin{pmatrix}
\cos\theta & -\sin\theta   \\
\sin\theta  &  \cos\theta 
\end{pmatrix}$ and whose signature is equal to $2$, see Section \ref{relative}. 

Let $\Sigma$ be a surface of genus $g$ with $n$ boundary components, where $n\geq 1$ and $g\geq 1$. Then $\Sigma$ can be decomposed as the connected sum of $g$ copies of $\Sigma_{1,1}$ and a surface $\Sigma_{n+g}$ of genus zero with $n+g$ boundary components,
\begin{equation*}
  \Sigma=\Sigma_{n+g}\#\underbrace{\Sigma_{1,1}\#\cdots\#\Sigma_{1,1}}_{g}.
\end{equation*}
The case when $g=5$ and $n=4$ is depicted on Figure 1.
\begin{figure}[ht]
\centering
\includegraphics[width=0.68\textwidth]{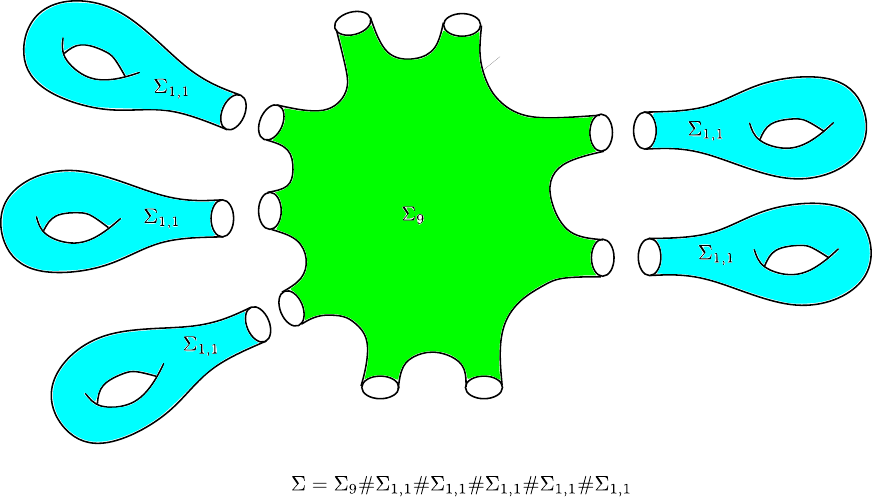}
\caption{$\Sigma$ is  the connected sum of $\Sigma_9$ and five $\Sigma_{1,1}$.}
\end{figure}

Let us fix an integer $m\geq 0$ in $[-2(n-2),2(n+2g-2)=2|\chi(\Sigma)|]$. Then
 \begin{equation*}
  -2(n+g-2)\leq m-2g\leq 2(n+g-2)=2|\chi(\Sigma_{n+g})|.
\end{equation*}
By Proposition \ref{prop-ell1}, there exists a representation 
$\phi_{g+1}:\pi_1(\Sigma_{n+g})\to SL(2,\mb{R})$ such that 
\begin{equation*}
  \mathrm{sign}(\phi_{g+1})=m-2g,
\end{equation*}
and such that at least $n+g-1\geq g$ boundary components $c_1,\cdots,c_{n+g-1}$ have elliptic holonomies. Suitably adjusting the angles $\theta_j$, one can glue representations
\begin{equation*}
  \psi_{\theta_j}:\pi_1(\Sigma_{1,1})\to SL(2,\mb{R}),\quad 1\leq j\leq g,
\end{equation*}
and obtain a representation of $\pi_1(\Sigma)$ of signature $m-2g+g\times 2=m$.

By composing with an involution $\iota$ of $SL(2,\br)$ (Formula \ref{anti-holomorphic}), one achieves negative integers $\ge 2\chi(\Sigma)$.

Hence we obtain
\begin{thm}\label{app-thm1}
Let $\Sigma$ be a surface with $n\geq 1$ boundaries, and $\chi(\Sigma)\leq 0$.
Then every integer in $[-2|\chi(\Sigma)|,2|\chi(\Sigma)|]$ is the signature of some boundary paraelliptic representation $\pi_1(\Sigma)\to SL(2,\mb{R})$, i.e. whose holonomies on boundary components have traces in $[-2,2]$. 
\end{thm}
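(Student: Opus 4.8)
The plan is to realize the representation on $\Sigma$ by cutting $\Sigma$ along simple closed curves into elementary pieces, computing the signature additively via the gluing property of the signature stated in the signature subsection, and controlling the sign by the anti-holomorphic involution $\iota$. Since $\text{sign}(\iota\circ\phi)=-\text{sign}(\phi)$ by Formula \ref{anti-holomorphic}, it suffices to produce, for each integer $m$ with $0\le m\le 2|\chi(\Sigma)|$, a boundary paraelliptic representation of signature $m$; composing with $\iota$ then covers the negative range, and the two ranges overlap at $0$.

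For genus zero this is precisely Proposition \ref{prop-ell1}: $SO(2)$-representations supply all even values in $[-2|\chi(\Sigma)|,2|\chi(\Sigma)|]$ via Proposition \ref{prop-ell}, while the odd values are obtained by splitting off a pair of pants carrying one of the explicit representations $\phi_\pm$ of signature $\pm 1$ (two elliptic and one parabolic boundary holonomy) and gluing it to an $SO(2)$-representation whose holonomy on the cutting curve is prescribed to match. All boundary holonomies then have trace in $[-2,2]$, at most one being parabolic.

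For genus $g\ge 1$ I would use the surface $\Sigma_{1,1}$ of genus one with one boundary as a signature-$2$ building block: for every $\theta\in(0,\pi)$ the holonomy of a hyperbolic structure on $\Sigma_{1,1}$ with a cone point lifts to $SL(2,\br)$ with elliptic boundary holonomy of angle $\theta$ and signature $2$, as recorded in Section \ref{relative}. Decomposing $\Sigma=\Sigma_{n+g}\#\Sigma_{1,1}\#\cdots\#\Sigma_{1,1}$ with $g$ copies and fixing a target $m$, one checks that $m-2g$ lies in $[-2|\chi(\Sigma_{n+g})|,2|\chi(\Sigma_{n+g})|]$, so the genus-zero case furnishes a representation of $\Sigma_{n+g}$ of signature $m-2g$ with at least $g$ of its $n+g$ boundary components elliptic and hence free for gluing. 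Matching the cone angle $\theta_j$ of each $\Sigma_{1,1}$ to the chosen elliptic holonomy and summing signatures, each block contributes $+2$, for a total of $m$, while the remaining $n$ boundary components stay paraelliptic.

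The main obstacle is the compatibility of the gluings with the paraellipticity requirement: additivity of the signature applies only when the two sides induce mutually inverse (conjugate) holonomies along each cutting curve, and simultaneously none of the genuine boundary components of $\Sigma$ may become hyperbolic. The room to maneuver is exactly the freedom, granted by Proposition \ref{prop-ell}, to prescribe one boundary conjugacy class of the $SO(2)$-piece, together with the continuous parameters $\theta$ appearing in $\phi_\pm$ and in $\psi_\theta$. The delicate point is the arithmetic bookkeeping — a congruence analysis modulo $4$ as in Proposition \ref{prop-ell1}, plus a verification that $m-2g$ never leaves the admissible genus-zero window — which guarantees that distributing the $+1$ (parabolic) and $+2$ (cone) contributions attains every integer in the interval rather than a proper sublattice.
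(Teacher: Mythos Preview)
Your proposal is correct and follows essentially the same approach as the paper: the genus-zero case via Proposition~\ref{prop-ell1}, the higher-genus case via the connected-sum decomposition $\Sigma=\Sigma_{n+g}\#\,g\cdot\Sigma_{1,1}$ with cone-point blocks of signature~$2$, and the involution $\iota$ for negative values. One small point where the paper is more careful: your assertion that ``$m-2g$ lies in $[-2|\chi(\Sigma_{n+g})|,2|\chi(\Sigma_{n+g})|]$'' for every $m\in[0,2|\chi(\Sigma)|]$ fails when $n=1$ and $m\in\{0,1\}$, since then $m-2g<-2(g-1)$. The paper avoids this by explicitly restricting to $m\ge -2(n-2)$ before invoking $\iota$, so that for $n\ge 2$ the positive and negative ranges overlap at $0$; for $n=1$ the missing values $\{-1,0,1\}$ require a separate remark (the trivial representation for $m=0$, and e.g.\ the Borel $\Sigma_{1,1}$ block with parabolic boundary from subsection~\ref{pun-torus} for $m=\pm 1$). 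This is exactly the ``arithmetic bookkeeping'' you flagged as delicate, so you were right to be cautious there.
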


\begin{rem}\label{upp}
Let $\Sigma$ be an oriented surface with boundary and negative Euler characteristic. Let $p\ge 1$ be an integer. The signatures of representations $\pi_1(\Sigma)\to SU(p,p)$ or $U(p,p)$ are all integers between $2p\chi(\Sigma)$ and $-2p\chi(\Sigma)$.
\end{rem}
Indeed, the Milnor-Wood type inequality of \cite[Theorem 4]{KPW} for $U(p,p)$-representations takes the form
\begin{equation*}
  |\mathrm{sign}|\leq 2p|\chi(\Sigma)|.
\end{equation*}
Since $SU(1,1)\cong SL(2,\mb{R})$, $SL(2,\mb{R})^p<SU(p,p)$, so, by additivity of signature under direct sums, every integer in the interval $[-2p|\chi(\Sigma)|,2p|\chi(\Sigma)|]$ can be the signature of some representation in $SU(p,p)$, hence in $U(p,p)$. This completes the proof of Theorem \ref{main}.

\section{Scheme of the proof of Theorem \ref{hyperparabolic}}

The higher genus case will be handled first, in Proposition \ref{higher}. We rely on W. Goldman's theorem that all integers in $[-|\chi(S)|,|\chi(S)|]$ are Euler classes of symplectic bundles over a closed surface $S$. When $\Sigma$ is the complement of a collection of disks in $S$, the required representations are obtained by deforming representations $\phi$ of $\pi_1(S)$. The argument requires $\phi$ to be a smooth point of $\text{Hom}(\pi_1(S),G)$. Unfortunately, we can achieve this only when the genus is at least $2$.

The genus 0 and 1 cases are proved in Proposition \ref{zero}. This time, the additivity of signature under surgery is used. This reduces to build flat bundles with prescribed signature over the pair of pants and the punctured torus.



\section{Higher genus}
\label{highergenus}

\subsection{One boundary component case}

Let $G=SL(2,\br)$.
Let $\Sigma$ be a genus $g\geq 2$ once punctured surface with boundary $C$. Let $S$ be a closed surface such that $\Sigma$ is homeomorphic to the complement of a disc in $S$. Then $\text{Hom}(\pi_1(S), G)$ can be identified with $\Phi^{-1}(id)$ where
$$
\Phi:G^{2g}\ra G,\ \Phi(A_1,B_1,\cdots,A_g,B_g)=\prod_{i=1}^{g} [A_i, B_i].
$$
Any element $\phi\in \text{Hom}(\pi_1(S), G)$ can be viewed as an element in $\text{Hom}(\pi_1(\Sigma), G)$ with $\phi(C)=I$. If $\phi \in \text{Hom}(\pi_1(S), G)$ is a regular point of $\Phi$, then a neighborhood $N$ of $\phi$ surjects onto a neighborhood $\cal N$ of $I$ in $G$. Choose a path $C_t\subset Par$ or $Hyp$ in $G$ with $C_0=id$. Then there exists a path $\phi_t$ with $\phi_0=\phi$ such that 
$$
\prod_{i=1}^{g} [\phi_t(A_i), \phi_t(B_i)]=C_t.
$$
This gives rise to a representation
$$
\phi_t:\pi_1(\Sigma)\ra G
$$
such that $\phi_t(C)=C_t^{-1}$.

Since the Toledo invariant is continuous and coincides with the relative Euler number of $\phi_t$ when the boundary holonomy is not an elliptic element,
$$\op{T}(\phi_t)=\op{T}(\phi)$$ for all $t$.
Using the formula \ref{formula}, $\text{sign}(\phi_t)=2 \op{T}(\phi_t)+\rro(C_t^{-1})$, we get, for $t>0$,
$$
\text{sign}(\phi_t)=2 \op{T}(\phi) +\epsilon, 
$$
with $\epsilon=-1$ if $C_t^{-1}\in Par+$, $\epsilon=0$ if $C_t^{-1}\in Hyp$ and $\epsilon=1$ if $C_t^{-1}\in Par-$. In this way, we get representations $\phi:\pi_1(\Sigma)\ra SL(2,\br)$ with prescribed signatures.  

In his thesis \cite{Gold1}, W. Goldman showed that when genus is at least $2$, every integer between $\chi(S)$ and $-\chi(S)$ can be realized as the Euler number, hence the Toledo number of some representation $\phi\in \text{Hom}(\pi_1(S),G)$. Note that $\chi(\Sigma)=\chi(S)-1$. Hence one can realize every integer between $2\chi(S)-1=-2|\chi(\Sigma)|+1$ and $-2\chi(S)+1=2|\chi(\Sigma)|-1$ as the signature of some representation $\phi:\pi_1(\Sigma)\ra SL(2,\br)$ by the above method. 

The maximum value $2|\chi(\Sigma)|$ can be obtained as the signature of the holonomy of a hyperbolic structure on $\Sigma$ with geodesic boundary, where the Toledo invariant is $\text{Area}(\Sigma)/2\pi=|\chi(\Sigma)|$.

Hence if we can find a regular point $\phi$ realizing a prescribed even signature, we shall be done.

\medskip

Let $G$ be a semisimple real algebraic group. In \cite{KP}, a point $\phi$ in $\text{Hom}(\pi_1(S), G)$ is called \emph{smooth} if $$\dim(Z^1(\pi_1(S), \mathfrak{g}_{Ad\circ\phi})=(1-\chi(S))\dim(G)=(2g-1)\dim G.$$
According to \cite[Proposition C.2]{KP}, this is equivalent to 
$$\Phi: G^{2g}\ra G$$ being a submersion at $\phi$.

By \cite[Lemma 2.3]{KP}, if $\phi$ has a discrete centralizer, then $H^2(\pi_1(S), \mathfrak{g}_{Ad\circ\phi})=0$, and $\phi$ is a smooth point. 

Now back to the case $G=SL(2,\br)$. If $\phi$ has a non-discrete centralizer, the image $\phi(\pi_1(S))\subset G$ of $\phi$ must be contained in an one-parameter subgroup of $G$. This implies that the Toledo invariant vanishes. Hence every $\phi\in \text{Hom}(\pi_1(S),G)$ with non-zero Toledo invariant is a smooth point, and so a regular point of $\Phi^{-1}(id)$.

If genus is $>1$, one can construct representations $\phi:\pi_1(S)\ra G$ with zero Toledo invariant, but with a discrete centralizer. Let $\phi(A_1)=h_1, \phi(A_2)=h_2$, and all other generators are mapped to $I$, where $h_1,h_2$ are two hyperbolic isometries of hyperbolic plane sharing exactly one fixed point at infinity. It satisfies the relation $\prod [\phi(A_i),\phi(B_i)]=I$. Hence $\phi$ is a homomorphism whose image lies in a Borel subgroup. Its Toledo invariant is zero, and the centralizer is trivial. Hence $\phi$ is a smooth point.

In this way, one can realize every integer between $2\chi(\Sigma)$ and $-2\chi(\Sigma)$ as the signature of a representation in $Sp(2,\br)$.

\subsection{Multiple boundary component case}
For a surface $\Sigma=S\setminus \{D_1,\cdots,D_k\}$ with many boundary components $C_1,\cdots,C_k$, one uses the map
$$
\Phi:G^{2g+k-1}\ra G^k,
$$
$$
\Phi(A_i,B_i, C_1,\cdots,C_{k-1})=(\prod[A_i,B_i]C_1\cdots C_{k-1},C_1,\cdots, C_{k-1}).
$$
Note that $\pi_1(\Sigma)=\langle A_i, B_i, C_1,\cdots, C_k: \prod_{i=1}^g[A_i,B_i]C_1\cdots C_{k-1}=C_k^{-1} \rangle$. Hence any homomorphism $\phi:\pi_1(S)\ra G$ is in $\Phi^{-1}(id,\cdots,id)$.  Since $\Phi$ on the last $k-1$ coordinates is merely a projection, it is obviously a submersion. On the first $2g$ coordinates, one uses the previous argument. 

Viewing $\phi$ as a representation $\phi:\pi_1(\Sigma)\ra G$ with $\phi(C_i)=id$, one can deform it to a nearby one whose holonomy around each puncture becomes parabolic or hyperbolic. 
Since $|\chi(\Sigma)|=|\chi(S)|+k$, we can fill in the gap by adding or subtracting by $1=\pm\rro(\text{parabolic})$ up to $2|\chi(S)|+k=2|\chi(\Sigma)|-k$ using Goldman's result.

Take a hyperbolic structure on $\Sigma$ with geodesic boundary. Then the Toledo invariant is $|\chi(\Sigma)|$. Let $\phi$ be the holonomy representation of this hyperbolic structure satisfying $\prod [\phi(A_i),\phi(B_i)]\phi(C_1)\cdots \phi(C_{k-1})=\phi(C_k)^{-1}$. Then deform one by one each of the $\phi_t(C_j),j=1,\cdots,k-1$ to a parabolic element $P_j$ with $\rro(P_j)=-1$, while staying in $Hyp_0$.
Geometrically it can be done by pinching the boundary component $C_j$ to a parabolic one. Since the Toledo invariant is a continuous function, and the relative Euler class has integer values when the boundary holonomy is non-elliptic, the Toledo invariant remains equal to $|\chi(\Sigma)|$ throughout the deformation. When $\phi$ becomes $\phi_1$ which hits the parabolic element $P_1$,
the signature changes to
$$
\text{sign}(\phi_1)=2 \op{T}(\phi_1)+\sum\rro(\phi_1(C_i))=2|\chi(\Sigma)|-1.
$$
We repeat this process one by one by pinching each boundary component $C_j, j=1,
\cdots, k-1$ to a parabolic one, to achieve all the values $2|\chi(\Sigma)|, 2|\chi(\Sigma)|-1,\cdots, 2|\chi(\Sigma)|-(k-1)$.

In this way, provided the genus is at least $2$, we obtain all the values from 0 to $2|\chi(\Sigma)|$, and by Formula \ref{anti-holomorphic}, we get all the negative values.

Therefore we have proved

\begin{prop}
\label{higher}
Let $\Sigma$ be a compact orientable surface with boundary with genus at least $2$. Then every integer in the interval $[-2|\chi(\Sigma)|,2|\chi(\Sigma)|]$ is the signature of some representation $\pi_1(\Sigma)\to Sp(2,\br)$ which is hyperbolic or parabolic on the boundary.
\end{prop}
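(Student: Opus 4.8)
The plan is to read the signature off the formula \ref{formula}, $\text{sign}(\phi)=2\op{T}+\rro_\phi(\p\Sigma)$. Once the boundary holonomy is hyperbolic or parabolic, the $\rro$ invariant contributes only $0$ (hyperbolic), $-1$ ($Par+$) or $+1$ ($Par-$) according to the table \ref{etasp}, and the Toledo invariant agrees with the integer relative Euler number by Proposition \ref{toleul}. So the signature is essentially $2$ times an Euler number, adjusted by a controlled integer coming from parabolic boundary components. First I would produce all \emph{even} values by pulling representations back from a closed surface: writing $\Sigma$ as the complement of $k$ discs in a closed surface $S$ of the same genus $g\ge 2$, I would invoke W. Goldman's theorem \cite{Gold1} that every integer in $[\chi(S),-\chi(S)]$ is the Euler number of some $\phi\in\text{Hom}(\pi_1(S),SL(2,\br))$, which doubles to give the even integers in $[2\chi(S),-2\chi(S)]$.

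To turn such a $\phi$, which has $\phi(C_j)=I$ on each boundary curve, into a representation of $\pi_1(\Sigma)$ with hyperbolic or parabolic boundary, I would deform it. Identifying $\text{Hom}(\pi_1(S),G)$ with $\Phi^{-1}(\mathrm{id})$ for $\Phi(A_i,B_i)=\prod_i[A_i,B_i]$, if $\phi$ is a regular point of $\Phi$ then a neighbourhood of $\phi$ surjects onto a neighbourhood of the identity; choosing a path $C_t$ in $Hyp$ or $Par$ with $C_0=\mathrm{id}$, I would lift it to $\phi_t$ with $\prod_i[\phi_t(A_i),\phi_t(B_i)]=C_t$, so that $\phi_t(C)=C_t^{-1}$. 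Since the Toledo invariant is continuous and equals the integer relative Euler number as soon as the boundary is non-elliptic, it is frozen along the deformation, and the signature formula yields $\text{sign}(\phi_t)=2\op{T}(\phi)+\epsilon$ with $\epsilon\in\{-1,0,1\}$ selected by the type of $C_t^{-1}$. This manufactures the odd values neighbouring each even one, covering all integers in an interval just short of the full target range.

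The crux, and the source of the genus hypothesis, is ensuring the starting $\phi$ is a regular (equivalently, smooth) point of $\Phi$. Following \cite{KP}, a representation with discrete centralizer satisfies $H^2(\pi_1(S),\mathfrak{g}_{Ad\circ\phi})=0$ and is thus smooth, so that $\Phi$ is submersive there. Every $\phi$ with nonzero Toledo invariant has discrete centralizer automatically---otherwise its image lies in a one-parameter subgroup and the Toledo invariant would vanish---so those cases come for free. The delicate case is Toledo invariant zero. Here I expect the main work: I would write down an explicit regular representation with discrete centralizer by sending two of the generators, $A_1$ and $A_2$, to hyperbolic isometries sharing exactly one fixed point at infinity and all remaining generators to $I$. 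Its image sits inside a Borel subgroup (so the Toledo invariant is zero) yet its centralizer is trivial, so it is smooth. Building such an example consumes two handles, which is exactly why $g\ge 2$ is needed; in genus $0$ and $1$ this route is unavailable and those cases must be treated separately.

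Finally I would handle several boundary components and the extreme values. For $k$ boundary curves I would use $\Phi\colon G^{2g+k-1}\to G^{k}$, $\Phi(A_i,B_i,C_1,\dots,C_{k-1})=(\prod_i[A_i,B_i]C_1\cdots C_{k-1},C_1,\dots,C_{k-1})$; the last $k-1$ slots are a bare projection, hence automatically submersive, and the handle argument controls the first factor. To reach the top value $2|\chi(\Sigma)|$ I would start from the holonomy of a hyperbolic structure with geodesic boundary, whose Toledo invariant equals $|\chi(\Sigma)|$ and whose boundary $\rro$ all vanish, and then pinch the boundary curves $C_1,\dots,C_{k-1}$ one at a time into $Par+$ elements while staying in $Hyp_0$; each pinch keeps the Toledo invariant maximal but drops the signature by $1$, filling the short top gap left open by the closed-surface bound. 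The corresponding negative values then follow at once from the antiholomorphic involution through Formula \ref{anti-holomorphic}, $\text{sign}(\iota\circ\phi)=-\text{sign}(\phi)$.
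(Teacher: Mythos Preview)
Your proposal is correct and follows essentially the same approach as the paper's proof: Goldman's realization of Euler numbers on the closed surface $S$, deformation via the submersion $\Phi$ at smooth points (with the explicit Borel-subgroup representation on the handles $A_1,A_2$ to handle the zero-Toledo case, which is precisely where $g\ge 2$ enters), the analogous map $\Phi:G^{2g+k-1}\to G^k$ for several boundary components, filling the top gap $2|\chi(\Sigma)|-(k-1),\ldots,2|\chi(\Sigma)|$ by pinching geodesic boundaries to $Par+$, and the involution for negative values. The structure and the key ideas match the paper's argument step for step.
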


\section{Building blocks}

To achieve the possible values in $Sp(2,\br)$ in genus 0 and 1, we assemble representations from Euler characteristic $-1$ subsurfaces. By additivity of signature, it therefore suffices to achieve signatures $-2,-1,0,1,2$ (plus some requirement on the boundary) on Euler characteristic $-1$ subsurfaces. Since composing a representation with an involution of $Sp(2,\br)$, like in Formula \ref{anti-holomorphic}, changes the sign of signature, we shall focus on the values $0,1,2$. All through, we shall need to compute Toledo invariants. For this, we shall use Proposition \ref{toleul} and compute relative Euler classes instead. Then we shall compute $\rro$ invariants, and obtain the signature from Formula \ref{formula}.

There are two surfaces of Euler characteristic $-1$, the pair of pants and the punctured torus, whence six building blocks.

\subsection{Pair of pants}\label{pair} A pair of pants $P$ has three boundary components $A,B,C$ and the induced orientation on the boundary is such that    $ABC=id$ at the fundamental group level. Here we use $A,B,C$ to denote both the boundary components, the corresponding elements of the fundamental group and their images in $Sp(2,\br)$. To find a representation $\phi:\pi_1(P)\ra Sp(2,\br)$, it suffices
to give three elements in $Sp(2,\br)$ satisfying $\phi(A)\phi(B)\phi(C)=I$. We shall use Formula \ref{formula},
\begin{equation*}\label{formulab}
\text{sign}(\phi)=2 \op{T}+\rro_\phi(\p P),
\end{equation*}
and the bound provided by the Milnor-Wood inequality \ref{MW}
 $$
 |\text{sign}(\phi)|\leq 2.
 $$\\

$\bullet$ Signature 0 building blocks\\

Consider two hyperbolic isometries fixing exactly one common ideal point, say $\infty$.
Concretely one can take
$$A=\begin{pmatrix}
\lambda & 0\\
0 & \lambda^{-1}\end{pmatrix}, B=\begin{pmatrix}
                                                             \alpha &  c \\
                                                              0         & \alpha^{-1}\end{pmatrix}.$$
Then $AB=\begin{pmatrix}
                 \lambda \alpha & \lambda c\\
                 0                         & (\lambda\alpha)^{-1}\end{pmatrix}$.                                                              
One can arrange so that $\lambda\alpha>1$ or $\lambda\alpha<-1$ by choosing $\alpha$ properly. Since the group generated by $A$ and $B$ is contained in the Borel subgroup, the Toledo invariant of this representation is zero.  By taking $C=(AB)^{-1}$, the signature is zero also for these representations by (\ref{formula}). We collect this information in the following diagram.
 \begin{center}
\begin{tikzpicture}[x=1cm,y=1cm]



\draw[thick] (0,0.3) -- (1,0.8);
\draw[thick] (0,-0.3)-- (1, -0.8);
\draw[thick] (1, 0.25)--(1, -0.25);

\draw [xscale=cos(70),  thick] (0,-0.3) arc (-90:90:0.3);
\draw [xscale=cos(70), thick] (0,0.3) arc (90:270:0.3);

\draw [xscale=cos(70),  thick] (3,0.2) arc (-90:90:0.3);
\draw [xscale=cos(70), thick] (3,0.8) arc (90:270:0.3);

\draw [xscale=cos(70),  thick] (3,-0.8) arc (-90:90:0.3);
\draw [xscale=cos(70), thick] (3,-0.2) arc (90:270:0.3);



\draw    (0, 0.6) node{{\tiny $t>0$}};
\draw    (1, 1) node{{\tiny $t>0$}};
\draw    (1, -1) node {{\tiny $t>0$}};
\draw    (0.5, 0) node{{\tiny 0}};

\draw[thick] (0,0.3) -- (1,0.8);
\draw[thick] (0,-0.3)-- (1, -0.8);
\draw[thick] (1, 0.25)--(1, -0.25);

\draw [xscale=cos(70),  thick] (9,-0.3) arc (-90:90:0.3);
\draw [xscale=cos(70), thick] (9,0.3) arc (90:270:0.3);

\draw [xscale=cos(70),  thick] (12,0.2) arc (-90:90:0.3);
\draw [xscale=cos(70), thick] (12,0.8) arc (90:270:0.3);

\draw [xscale=cos(70),  thick] (12,-0.8) arc (-90:90:0.3);
\draw [xscale=cos(70), thick] (12,-0.2) arc (90:270:0.3);
\draw    (3, 0.6) node{{\tiny $t>0$}};
\draw    (4, 1) node{{\tiny $t<0$}};
\draw    (4, -1) node {{\tiny $t<0$}};
\draw    (3.5, 0) node{{\tiny 0}};
\draw[thick] (3.1,0.3) -- (4.1,0.8);
\draw[thick] (3.1,-0.3)-- (4.1, -0.8);
\draw[thick] (4.1, 0.25)--(4.1, -0.25);
\end{tikzpicture}
\end{center}        
                Here the number 0 inside the pair of pants denotes the signature,  $t>0$ represents a hyperbolic element with positive trace $>2$, and $t<0$ represents a hyperbolic element with negative trace $<-2$. 
                
                By taking $\lambda\alpha=-1$, we get $C$ a parabolic element with negative trace so that $\rro(C)=0$.
                   \begin{center}
\begin{tikzpicture}[x=1cm,y=1cm]

\draw[thick] (0,0.3) -- (1,0.8);
\draw[thick] (0,-0.3)-- (1, -0.8);
\draw[thick] (1, 0.25)--(1, -0.25);

\draw [xscale=cos(70),  thick] (0,-0.3) arc (-90:90:0.3);
\draw [xscale=cos(70), thick] (0,0.3) arc (90:270:0.3);


\draw [xscale=cos(70),  thick] (3,-0.8) arc (-90:90:0.3);
\draw [xscale=cos(70), thick] (3,-0.2) arc (90:270:0.3);

\draw    (0, 0.6) node{{\tiny $t>0$}};
\draw    (1, 1) node{{\tiny $\rro=0$}};
\draw    (1, -1) node {{\tiny $t<0$}};
\draw    (0.5, 0) node{{\tiny 0}};

\draw[thick] (0,0.3) -- (1,0.8);
\draw[thick] (0,-0.3)-- (1, -0.8);
\draw[thick] (1, 0.25)--(1, 0.8);

\end{tikzpicture}
\end{center}

$\bullet$ Signature 1 building blocks\\

 Take
$$A=\begin{pmatrix}
\lambda & 0\\
0 & \lambda^{-1}\end{pmatrix}, B=\begin{pmatrix}
                                                             \lambda^{-1} &  c \\
                                                              0         & \lambda\end{pmatrix}, \lambda>1, c>0$$ so that 
                                                               $AB=\begin{pmatrix}
                 1 & \lambda c\\
                 0                         & 1 \end{pmatrix}$ with $\lambda c>0$.                                                              
For the same reason, the Toledo invariant is 0. Hence $(AB)^{-1}=C=\begin{pmatrix}
 1 & -\lambda c \\
    0  &  1 \end{pmatrix}$, and hence $\rro(C)=1$
according to the table \ref{etasp}.
        \begin{center}
\begin{tikzpicture}[x=1cm,y=1cm]

\draw[thick] (0,0.3) -- (1,0.8);
\draw[thick] (0,-0.3)-- (1, -0.8);
\draw[thick] (1, 0.25)--(1, -0.25);

\draw [xscale=cos(70),  thick] (0,-0.3) arc (-90:90:0.3);
\draw [xscale=cos(70), thick] (0,0.3) arc (90:270:0.3);


\draw [xscale=cos(70),  thick] (3,-0.8) arc (-90:90:0.3);
\draw [xscale=cos(70), thick] (3,-0.2) arc (90:270:0.3);

\draw    (0, 0.6) node{{\tiny $t>0$}};
\draw    (1, 1) node{{\tiny $\rro=1$}};
\draw    (1, -1) node {{\tiny $t>0$}};
\draw    (0.5, 0) node{{\tiny 1}};

\draw[thick] (0,0.3) -- (1,0.8);
\draw[thick] (0,-0.3)-- (1, -0.8);
\draw[thick] (1, 0.25)--(1, 0.8);

\end{tikzpicture}
\end{center}                  
        Here $\rro=1$ represents a parabolic element with $\rro=1$.       
        
Consider a parabolic element fixing $\infty$, $A=\begin{pmatrix}
                                               1 & n \\
                                               0 &  1\end{pmatrix}$ with $n>0$ so that $\rro(A)=-1$ and a hyperbolic element fixing $0$, $B=\begin{pmatrix}
                                               a & 0 \\
                                               c  & a^{-1}\end{pmatrix}$ with $a>1$ so that
   $AB=\begin{pmatrix}
   a+nc & na^{-1}\\
   c      & a^{-1}\end{pmatrix}$.         Then choosing $c$ properly, the trace of $AB$,
   $a+nc+a^{-1}<-2 $, or equal to $-2$.                             The former case, $AB$ is hyperbolic with negative trace, and the latter case is parabolic.    The Toledo invariant is 1 in both cases.     Take $C= (AB)^{-1}$. Then either $\text{tr}(C)<0$ or $\rro(C)=0$.
        \begin{center}
\begin{tikzpicture}[x=1cm,y=1cm]


\draw [xscale=cos(70),  thick] (3,0.2) arc (-90:90:0.3);
\draw [xscale=cos(70), thick] (3,0.8) arc (90:270:0.3);

\draw [xscale=cos(70),  thick] (3,-0.8) arc (-90:90:0.3);
\draw [xscale=cos(70), thick] (3,-0.2) arc (90:270:0.3);

\draw    (-0.5, 0.2) node{{\tiny $\rro=-1$}};
\draw    (1, 1) node{{\tiny $t<0$}};
\draw    (1, -1) node {{\tiny $t>0$}};
\draw    (0.5, 0) node{{\tiny 1}};

\draw[thick] (0,0) -- (1,0.8);
\draw[thick] (0,0)-- (1, -0.8);
\draw[thick] (1, 0.25)--(1, -0.25);


\draw [xscale=cos(70),  thick] (12,-0.8) arc (-90:90:0.3);
\draw [xscale=cos(70), thick] (12,-0.2) arc (90:270:0.3);

\draw    (2.7, 0.2) node{{\tiny $\rro=-1$}};
\draw    (4, 1) node{{\tiny $\rro=0$}};
\draw    (4, -1) node {{\tiny $t>0$}};
\draw    (3.7, 0) node{{\tiny 1}};

\draw[thick] (3.1,0) -- (4.1,0.8);
\draw[thick] (3.1,0)-- (4.1, -0.8);
\draw[thick] (4.1, 0.8)--(4.1, -0.25);

\end{tikzpicture}
\end{center}                         
                 
$\bullet$ Signature 2 and more signature 0 building blocks\\

Take two hyperbolic elements $A,B$ with positive traces, which generate a Schottky group, whose quotient is a pair of pants. In this case, the representation is the holonomy representation of a complete hyperbolic structure, and the Toledo invariant is 1. By (\ref{relation}), if we take $C'$ with positive trace, then $ABC'=-I$ since the Toledo invariant $m=1$. Hence $C=-C'$. Hence $C=(AB)^{-1}$ is a hyperbolic isometry with negative trace. 

 \begin{center}
\begin{tikzpicture}[x=1cm,y=1cm]



\draw[thick] (0,0.3) -- (1,0.8);
\draw[thick] (0,-0.3)-- (1, -0.8);
\draw[thick] (1, 0.25)--(1, -0.25);

\draw [xscale=cos(70),  thick] (0,-0.3) arc (-90:90:0.3);
\draw [xscale=cos(70), thick] (0,0.3) arc (90:270:0.3);

\draw [xscale=cos(70),  thick] (3,0.2) arc (-90:90:0.3);
\draw [xscale=cos(70), thick] (3,0.8) arc (90:270:0.3);

\draw [xscale=cos(70),  thick] (3,-0.8) arc (-90:90:0.3);
\draw [xscale=cos(70), thick] (3,-0.2) arc (90:270:0.3);



\draw    (0, 0.6) node{{\tiny $t>0$}};
\draw    (1, 1) node{{\tiny $t<0$}};
\draw    (1, -1) node {{\tiny $t>0$}};
\draw    (0.5, 0) node{{\tiny 2}};
\end{tikzpicture}
\end{center}   
We can deform $A,B$ to parabolic elements to get two cusped pair of pants.
Concretely, one can take one parabolic element fixing $\infty$, and the other parabolic element fixing $0$, to obtain
$$\begin{pmatrix}
1 & n \\
0 & 1 \end{pmatrix} \begin{pmatrix}
                                  1 & 0 \\
                                   m & 1 \end{pmatrix}=\begin{pmatrix}
                                                               1+nm & n \\
                                                               m    & 1 \end{pmatrix}.$$
Here $n>0$ so that $\rro(A)=-1$, $m<0$  so that the trace of $AB$, $2+mn<-2$.
Note that $\rro(B)=-1$ since  one can conjugate it by $-1/z$ to
$\begin{pmatrix}
1 & -m \\
0 & 1\end{pmatrix}$. Note that $C=(AB)^{-1}$ has negative trace.
       \begin{center}
\begin{tikzpicture}[x=1cm,y=1cm]


\draw [xscale=cos(70),  thick] (3,0.2) arc (-90:90:0.3);
\draw [xscale=cos(70), thick] (3,0.8) arc (90:270:0.3);


\draw    (-0.5, 0.2) node{{\tiny $\rro=-1$}};
\draw    (1, 1) node{{\tiny $t<0$}};
\draw    (1, -1) node {{\tiny $\rro=-1$}};
\draw    (0.5, 0) node{{\tiny 0}};

\draw[thick] (0,0) -- (1,0.8);
\draw[thick] (0,0)-- (1, -0.8);
\draw[thick] (1, 0.25)--(1, -0.8);

\draw [xscale=cos(70),  thick] (3,0.2) arc (-90:90:0.3);
\draw [xscale=cos(70), thick] (3,0.8) arc (90:270:0.3);

\end{tikzpicture}
\end{center} 
We can further deform the previous one so that $C$ becomes a parabolic element with negative trace $2+mn=-2$. Explicitly, one can take $n=1, m=-4$.
     \begin{center}
\begin{tikzpicture}[x=1cm,y=1cm]




\draw    (-0.5, 0.2) node{{\tiny $\rro=-1$}};
\draw    (1, 1) node{{\tiny $\rro=0$}};
\draw    (1, -1) node {{\tiny $\rro=-1$}};
\draw    (0.5, 0) node{{\tiny 0}};

\draw[thick] (0,0) -- (1,0.8);
\draw[thick] (0,0)-- (1, -0.8);
\draw[thick] (1, 0.8)--(1, -0.8);
\end{tikzpicture}
\end{center} 
\subsection{Punctured torus}\label{pun-torus}
The punctured torus' fundamental group admits the presentation $\langle A, B: [A,B]C=id \rangle$.

\smallskip

$\bullet$  Signature 0 blocks\\

In Goldman's paper \cite{Gold1}, Lemma 7.2 explains that there exist pairs $(A,B)\in 
Hyp_0\cup Par_0$ (hence traces are positive) whose commutator $[A,B]\in Hyp_0$. It follows that $tr([A,B]^{-1})>0$. Taking $C=[A,B]^{-1}$, the Toledo invariant of the representation determined by $A,B$ is zero.
\begin{center}
\begin{tikzpicture}[x=1cm,y=1cm]

\begin{scope}[shift={(2,0)}, thick]
\clip(-1.8,-2)rectangle(3,2);
\draw (0,0) circle [x radius=2, y radius=1];
\end{scope}

\draw[shift={(2,0)}, yscale=cos(70), thick] (-1,0) arc (-180:0:1);
\draw[shift={(2,0)}, yscale=cos(70), thick] (-0.7,-0.6) arc (180:0:0.7);

\draw [thick] (0,0.3) .. controls (0.1,0.3) and (0.1,0.34) .. (0.21,0.446);
\draw [thick] (0,-0.3) .. controls (0.1,-0.3) and (0.1,-0.34) .. (0.21,-0.446);

\draw [xscale=cos(70), dashed, thick] (0,-0.3) arc (-90:90:0.3);
\draw [xscale=cos(70), thick] (0,0.3) arc (90:270:0.3);

\draw (-0.1,-0.7) node{{$t>0$}};
\draw  (0.5, 0) node{{ 0}};
\end{tikzpicture}
\end{center}

$\bullet$ Signature 1 blocks\\

Take $A=\begin{pmatrix}
           \lambda & 0\\
           0             & \lambda^{-1}\end{pmatrix},\lambda>1, B=\begin{pmatrix}
                                                                                \lambda^{-1} & c\\
                                                                                0         &    \lambda\end{pmatrix}$ so that
 $[A,B]=\begin{pmatrix}
                1 & c(\lambda-\lambda^{-1}) \\
                 0  & 1\end{pmatrix}$.
    This representation has image inside the Borel subgroup, hence its Toledo invariant is zero.  We take $C=[A,B]^{-1}$. Depending on the sign of $c$, $\text{sign}(\phi)=1, \rro(C)=1$ if $c>0$, or $\text{sign}(\phi)=-1,\rro(C)=-1$ if $c<0$.
                 
\begin{center}
\begin{tikzpicture}[x=1cm,y=1cm]

\begin{scope}[shift={(2,0)}, thick]
\clip(-1.8,-2)rectangle(3,2);
\draw (0,0) circle [x radius=2, y radius=1];
\end{scope}

\draw[shift={(2,0)}, yscale=cos(70), thick] (-1,0) arc (-180:0:1);
\draw[shift={(2,0)}, yscale=cos(70), thick] (-0.7,-0.6) arc (180:0:0.7);

\draw [thick] (0,0.3) .. controls (0.1,0.3) and (0.1,0.34) .. (0.21,0.446);
\draw [thick] (0,-0.3) .. controls (0.1,-0.3) and (0.1,-0.34) .. (0.21,-0.446);


\draw[thick] (-0.5,0) -- (0.03,0.28);
\draw[thick] (-0.5,0)-- (0.03, -0.28);

\draw (-0.3,-0.6) node{{$\rro=1$}};
\draw  (0.5, 0) node{{ 1}};

\begin{scope}[shift={(8,0)}, thick]
\clip(-1.8,-2)rectangle(3,2);
\draw (0,0) circle [x radius=2, y radius=1];
\end{scope}

\draw[shift={(8,0)}, yscale=cos(70), thick] (-1,0) arc (-180:0:1);
\draw[shift={(8,0)}, yscale=cos(70), thick] (-0.7,-0.6) arc (180:0:0.7);

\draw [shift={(6,0)}, thick] (0,0.3) .. controls (0.1,0.3) and (0.1,0.34) .. (0.21,0.446);
\draw [shift={(6,0)},thick] (0,-0.3) .. controls (0.1,-0.3) and (0.1,-0.34) .. (0.21,-0.446);


\draw[shift={(6,0)}, thick] (-0.5,0) -- (0.03,0.28);
\draw[shift={(6,0)},thick] (-0.5,0)-- (0.03, -0.28);

\draw [shift={(5.8,0)}](-0.3,-0.6) node{{$\rro=-1$}};
\draw [shift={(6,0)}]  (0.5, 0) node{{ $-1$}};

\end{tikzpicture}
\end{center}

$\bullet$ Signature 2 blocks\\

If one takes a hyperbolic structure on the punctured torus with geodesic boundary, the Euler class has maximum value 1 \cite[Theorem D]{Gold1}, hence the Toledo invariant is also 1. By (\ref{relation}), for $C'$ with positive trace, $[A,B]C'=-I$. Then $[A,B]\in Hyp_1$ has a negative trace. Hence $C=[A,B]^{-1}$ has a negative trace. 

If one deforms the geodesic boundary to a parabolic one, the Toledo invariant is still maximum 1, but $\rro(C)=0$ since $C$ has a negative trace. 

\begin{center}
\begin{tikzpicture}[x=1cm,y=1cm]

\begin{scope}[shift={(2,0)}, thick]
\clip(-1.8,-2)rectangle(3,2);
\draw (0,0) circle [x radius=2, y radius=1];
\end{scope}

\draw[shift={(2,0)}, yscale=cos(70), thick] (-1,0) arc (-180:0:1);
\draw[shift={(2,0)}, yscale=cos(70), thick] (-0.7,-0.6) arc (180:0:0.7);

\draw [thick] (0,0.3) .. controls (0.1,0.3) and (0.1,0.34) .. (0.21,0.446);
\draw [thick] (0,-0.3) .. controls (0.1,-0.3) and (0.1,-0.34) .. (0.21,-0.446);

\draw [xscale=cos(70), dashed, thick] (0,-0.3) arc (-90:90:0.3);
\draw [xscale=cos(70), thick] (0,0.3) arc (90:270:0.3);


\draw (-0.3,-0.6) node{{$t<0$}};
\draw  (0.5, 0) node{{ 2}};

\begin{scope}[shift={(8,0)}, thick]
\clip(-1.8,-2)rectangle(3,2);
\draw (0,0) circle [x radius=2, y radius=1];
\end{scope}

\draw[shift={(8,0)}, yscale=cos(70), thick] (-1,0) arc (-180:0:1);
\draw[shift={(8,0)}, yscale=cos(70), thick] (-0.7,-0.6) arc (180:0:0.7);

\draw [shift={(6,0)}, thick] (0,0.3) .. controls (0.1,0.3) and (0.1,0.34) .. (0.21,0.446);
\draw [shift={(6,0)},thick] (0,-0.3) .. controls (0.1,-0.3) and (0.1,-0.34) .. (0.21,-0.446);


\draw[shift={(6,0)}, thick] (-0.5,0) -- (0.03,0.28);
\draw[shift={(6,0)},thick] (-0.5,0)-- (0.03, -0.28);

\draw [shift={(5.8,0)}](-0.3,-0.6) node{{$\rro=0$}};
\draw [shift={(6,0)}]  (0.5, 0) node{{ $2$}};

\end{tikzpicture}
\end{center}

\section{Genus 0 case}

Note that for a pair of pants $P$, there is a building block of all signatures 0, 1, 2 with
a positive trace hyperbolic element on the boundary.

Let us prove by induction on $k$ the following statement: \emph{every integer between $0$ and $2k$ is equal to the signature of a flat bundle over a genus $0$ surface, in such a way that
\begin{itemize}
  \item the boundary holonomy is hyperbolic or parabolic,
  \item there is at least one boundary component along which the holonomy is hyperbolic with positive trace.
\end{itemize}}

The case $k=1$ has been covered in the previous section. 

Here comes the inductive step. Let $m=0,\ldots,2k$. By induction, we can start from a genus zero surface $\Sigma$ such that $|\chi(\Sigma)|=k$ equipped with a flat bundle of signature $m$. By induction again, the boundary holonomy is hyperbolic or parabolic, and one of the boundary components of $\Sigma$ has a holonomy which is hyperbolic with positive trace. If we attach
to this boundary one of the 3 types of building blocks described in the previous section, along a boundary with hyperbolic holonomy of positive trace, as in the following pictures, we can achieve signatures $m$, $m+1$ and $m+2$. 

Building block suitable to achieve signature $m$:
 \begin{center}
\begin{tikzpicture}[x=1cm,y=1cm]



\draw[thick] (0,0.3) -- (1,0.8);
\draw[thick] (0,-0.3)-- (1, -0.8);
\draw[thick] (1, 0.25)--(1, -0.25);

\draw [xscale=cos(70),  thick] (0,-0.3) arc (-90:90:0.3);
\draw [xscale=cos(70), thick] (0,0.3) arc (90:270:0.3);

\draw [xscale=cos(70),  thick] (3,0.2) arc (-90:90:0.3);
\draw [xscale=cos(70), thick] (3,0.8) arc (90:270:0.3);

\draw [xscale=cos(70),  thick] (3,-0.8) arc (-90:90:0.3);
\draw [xscale=cos(70), thick] (3,-0.2) arc (90:270:0.3);



\draw    (0, 0.6) node{{\tiny $t>0$}};
\draw    (1, 1) node{{\tiny $t>0$}};
\draw    (1, -1) node {{\tiny $t>0$}};
\draw    (0.5, 0) node{{\tiny 0}};

\draw[thick] (0,0.3) -- (1,0.8);
\draw[thick] (0,-0.3)-- (1, -0.8);
\draw[thick] (1, 0.25)--(1, -0.25);

\end{tikzpicture}
\end{center}

Building block suitable to achieve signature $m+1$:
  \begin{center}
\begin{tikzpicture}[x=1cm,y=1cm]

\draw[thick] (0,0.3) -- (1,0.8);
\draw[thick] (0,-0.3)-- (1, -0.8);
\draw[thick] (1, 0.25)--(1, -0.25);

\draw [xscale=cos(70),  thick] (0,-0.3) arc (-90:90:0.3);
\draw [xscale=cos(70), thick] (0,0.3) arc (90:270:0.3);


\draw [xscale=cos(70),  thick] (3,-0.8) arc (-90:90:0.3);
\draw [xscale=cos(70), thick] (3,-0.2) arc (90:270:0.3);

\draw    (0, 0.6) node{{\tiny $t>0$}};
\draw    (1, 1) node{{\tiny $\rro=1$}};
\draw    (1, -1) node {{\tiny $t>0$}};
\draw    (0.5, 0) node{{\tiny 1}};

\draw[thick] (0,0.3) -- (1,0.8);
\draw[thick] (0,-0.3)-- (1, -0.8);
\draw[thick] (1, 0.25)--(1, 0.8);

\end{tikzpicture}
\end{center}   

Building block suitable to achieve signature $m+2$:
\begin{center}
\begin{tikzpicture}[x=1cm,y=1cm]



\draw[thick] (0,0.3) -- (1,0.8);
\draw[thick] (0,-0.3)-- (1, -0.8);
\draw[thick] (1, 0.25)--(1, -0.25);

\draw [xscale=cos(70),  thick] (0,-0.3) arc (-90:90:0.3);
\draw [xscale=cos(70), thick] (0,0.3) arc (90:270:0.3);

\draw [xscale=cos(70),  thick] (3,0.2) arc (-90:90:0.3);
\draw [xscale=cos(70), thick] (3,0.8) arc (90:270:0.3);

\draw [xscale=cos(70),  thick] (3,-0.8) arc (-90:90:0.3);
\draw [xscale=cos(70), thick] (3,-0.2) arc (90:270:0.3);



\draw    (0, 0.6) node{{\tiny $t>0$}};
\draw    (1, 1) node{{\tiny $t<0$}};
\draw    (1, -1) node {{\tiny $t>0$}};
\draw    (0.5, 0) node{{\tiny 2}};
\end{tikzpicture}
\end{center}

Therefore the new surface has genus $0$, Euler characteristic $-k-1$, it admits flat bundles whose signatures achieve all values from $0$ to $2k+2$, while satisfying the boundary requirements. Here is an example:

 \begin{center}
\begin{tikzpicture}[x=1cm,y=1cm]



\draw[thick] (0,0.3) -- (1,0.8);
\draw[thick] (0,-0.3)-- (1, -0.8);
\draw[thick] (1, 0.25)--(1, -0.25);

\draw [xscale=cos(70),  thick] (0,-0.3) arc (-90:90:0.3);
\draw [xscale=cos(70), thick] (0,0.3) arc (90:270:0.3);

\draw [xscale=cos(70),  thick] (3,0.2) arc (-90:90:0.3);
\draw [xscale=cos(70), thick] (3,0.8) arc (90:270:0.3);

\draw [xscale=cos(70),  thick] (3,-0.8) arc (-90:90:0.3);
\draw [xscale=cos(70), thick] (3,-0.2) arc (90:270:0.3);



\draw    (0, 0.6) node{{\tiny $t>0$}};
\draw    (1, 1) node{{\tiny $t<0$}};
\draw    (1, -1.1) node {{\tiny $t>0$}};
\draw    (0.5, 0) node{{\tiny 2}};

\draw[thick] (0,0.3) -- (1,0.8);
\draw[thick] (0,-0.3)-- (1, -0.8);
\draw[thick] (1, 0.25)--(1, -0.25);

\draw [shift={(1,-0.5)}, xscale=cos(70),  thick] (0,-0.3) arc (-90:90:0.3);
\draw [shift={(1,-0.5)}, xscale=cos(70), thick] (0,0.3) arc (90:270:0.3);


\draw [shift={(1,-0.5)}, xscale=cos(70),  thick] (3,-0.8) arc (-90:90:0.3);
\draw [shift={(1,-0.5)}, xscale=cos(70), thick] (3,-0.2) arc (90:270:0.3);

\draw    [shift={(1,-0.5)}](1, 1) node{{\tiny $\rro=1$}};
\draw    [shift={(1,-0.5)}](1, -1.1) node {{\tiny $t>0$}};
\draw   [shift={(1,-0.5)}] (0.5, 0) node{{\tiny 1}};

\draw[shift={(1,-0.5)},thick] (0,0.3) -- (1,0.8);
\draw[shift={(1,-0.5)},thick] (0,-0.3)-- (1, -0.8);
\draw[shift={(1,0.1)},thick] (1, 0.25)--(1, -0.8);

\draw[thick] (0,0.3) -- (1,0.8);
\draw[thick] (0,-0.3)-- (1, -0.8);
\draw[thick] (1, 0.25)--(1, -0.25);

\draw [shift={(2,-1)}, xscale=cos(70),  thick] (0,-0.3) arc (-90:90:0.3);
\draw [shift={(2,-1)}, xscale=cos(70), thick] (0,0.3) arc (90:270:0.3);

\draw [shift={(2,-1)}, xscale=cos(70),  thick] (3,0.2) arc (-90:90:0.3);
\draw [shift={(2,-1)}, xscale=cos(70), thick] (3,0.8) arc (90:270:0.3);

\draw [shift={(2,-1)}, xscale=cos(70),  thick] (3,-0.8) arc (-90:90:0.3);
\draw [shift={(2,-1)}, xscale=cos(70), thick] (3,-0.2) arc (90:270:0.3);

\draw    [shift={(2,-1)}](1, 1) node{{\tiny $t>0$}};
\draw    [shift={(2,-1)}](1, -1) node {{\tiny $t>0$}};
\draw    [shift={(2,-1)}](0.5, 0) node{{\tiny 0}};

\draw[shift={(2,-1)}, thick] (0,0.3) -- (1,0.8);
\draw[shift={(2,-1)}, thick] (0,-0.3)-- (1, -0.8);
\draw[shift={(2,-1)}, thick] (1, 0.25)--(1, -0.25);
\end{tikzpicture}
\end{center}   

This concludes the induction proof.

\section{Genus 1 case}

Let $\Sigma$ be a twice punctured torus. Let us decompose $\Sigma$ into a punctured torus and a pair of pants. The five pictures below show how to glue together building blocks in order to achieve all integers up to 4 as signatures of flat bundles with hyperbolic or parabolic boundary holonomy, and at least one boundary component whose holonomy is hyperbolic with positive trace.

\begin{center}
\begin{tikzpicture}[x=1cm,y=1cm]

\begin{scope}[shift={(2,0)}, thick]
\clip(-1.8,-2)rectangle(3,2);
\draw (0,0) circle [x radius=2, y radius=1];
\end{scope}

\draw[shift={(2,0)}, yscale=cos(70), thick] (-1,0) arc (-180:0:1);
\draw[shift={(2,0)}, yscale=cos(70), thick] (-0.7,-0.6) arc (180:0:0.7);

\draw [thick] (0,0.3) .. controls (0.1,0.3) and (0.1,0.34) .. (0.21,0.446);
\draw [thick] (0,-0.3) .. controls (0.1,-0.3) and (0.1,-0.34) .. (0.21,-0.446);

\draw [xscale=cos(70), dashed, thick] (0,-0.3) arc (-90:90:0.3);
\draw [xscale=cos(70), thick] (0,0.3) arc (90:270:0.3);

\draw  (0.5, 0) node{{ 0}};

\draw [shift={(-1,0.5)}, xscale=cos(70),  thick] (0,-0.3) arc (-90:90:0.3);
\draw [shift={(-1,0.5)}, xscale=cos(70), thick] (0,0.3) arc (90:270:0.3);

\draw [shift={(-1,0.5)}, xscale=cos(70),  thick] (3,-0.8) arc (-90:90:0.3);
\draw [shift={(-1,0.5)}, xscale=cos(70), thick] (3,-0.2) arc (90:270:0.3);

\draw    [shift={(-1,0.5)}](0, 0.6) node{{\tiny $t>0$}};
\draw    [shift={(-1,0.5)}](1, 1) node{{\tiny $t>0$}};
\draw    [shift={(-1,0.5)}](1, -1.2) node {{\tiny $t>0$}};
\draw    [shift={(-1,0.5)}](0.5, 0) node{{\tiny 0}};

\draw[shift={(-1,0.5)}, thick] (0,0.3) -- (1,0.8);
\draw[shift={(-1,0.5)}, thick] (0,-0.3)-- (1, -0.8);
\draw[shift={(-1,0.5)}, thick] (1, -0.2)--(1, 0.8);
\end{tikzpicture}
\end{center}

\begin{center}
\begin{tikzpicture}[x=1cm,y=1cm]

\begin{scope}[shift={(2,0)}, thick]
\clip(-1.8,-2)rectangle(3,2);
\draw (0,0) circle [x radius=2, y radius=1];
\end{scope}

\draw[shift={(2,0)}, yscale=cos(70), thick] (-1,0) arc (-180:0:1);
\draw[shift={(2,0)}, yscale=cos(70), thick] (-0.7,-0.6) arc (180:0:0.7);

\draw [thick] (0,0.3) .. controls (0.1,0.3) and (0.1,0.34) .. (0.21,0.446);
\draw [thick] (0,-0.3) .. controls (0.1,-0.3) and (0.1,-0.34) .. (0.21,-0.446);

\draw [xscale=cos(70), dashed, thick] (0,-0.3) arc (-90:90:0.3);
\draw [xscale=cos(70), thick] (0,0.3) arc (90:270:0.3);

\draw  (0.5, 0) node{{ 0}};

\draw [shift={(-1,0.5)}, xscale=cos(70),  thick] (0,-0.3) arc (-90:90:0.3);
\draw [shift={(-1,0.5)}, xscale=cos(70), thick] (0,0.3) arc (90:270:0.3);

\draw [shift={(-1,0.5)}, xscale=cos(70),  thick] (3,-0.8) arc (-90:90:0.3);
\draw [shift={(-1,0.5)}, xscale=cos(70), thick] (3,-0.2) arc (90:270:0.3);

\draw    [shift={(-1,0.5)}](0, 0.6) node{{\tiny $t>0$}};
\draw    [shift={(-1,0.5)}](1, 1) node{{\tiny $\rro=1$}};
\draw    [shift={(-1,0.5)}](1, -1.15) node {{\tiny $t>0$}};
\draw    [shift={(-1,0.5)}](0.5, 0) node{{\tiny 1}};

\draw[shift={(-1,0.5)}, thick] (0,0.3) -- (1,0.8);
\draw[shift={(-1,0.5)}, thick] (0,-0.3)-- (1, -0.8);
\draw[shift={(-1,0.5)}, thick] (1, -0.2)--(1, 0.8);
\end{tikzpicture}
\end{center}

\begin{center}
\begin{tikzpicture}[x=1cm,y=1cm]

\begin{scope}[shift={(2,0)}, thick]
\clip(-1.8,-2)rectangle(3,2);
\draw (0,0) circle [x radius=2, y radius=1];
\end{scope}

\draw[shift={(2,0)}, yscale=cos(70), thick] (-1,0) arc (-180:0:1);
\draw[shift={(2,0)}, yscale=cos(70), thick] (-0.7,-0.6) arc (180:0:0.7);

\draw [thick] (0,0.3) .. controls (0.1,0.3) and (0.1,0.34) .. (0.21,0.446);
\draw [thick] (0,-0.3) .. controls (0.1,-0.3) and (0.1,-0.34) .. (0.21,-0.446);

\draw [xscale=cos(70), dashed, thick] (0,-0.3) arc (-90:90:0.3);
\draw [xscale=cos(70), thick] (0,0.3) arc (90:270:0.3);

\draw  (0.5, 0) node{{ 0}};

\draw [shift={(-1,0.5)}, xscale=cos(70),  thick] (0,-0.3) arc (-90:90:0.3);
\draw [shift={(-1,0.5)}, xscale=cos(70), thick] (0,0.3) arc (90:270:0.3);

\draw [shift={(-1,0.5)}, xscale=cos(70),  thick] (3,-0.8) arc (-90:90:0.3);
\draw [shift={(-1,0.5)}, xscale=cos(70), thick] (3,-0.2) arc (90:270:0.3);

\draw    [shift={(-1,0.5)}](0, 0.6) node{{\tiny $t<0$}};
\draw    [shift={(-1,0.5)}](1, 1) node{{\tiny $t>0$}};
\draw    [shift={(-1,0.5)}](1, -1.2) node {{\tiny $t>0$}};
\draw    [shift={(-1,0.5)}](0.5, 0) node{{\tiny 2}};

\draw[shift={(-1,0.5)}, thick] (0,0.3) -- (1,0.8);
\draw[shift={(-1,0.5)}, thick] (0,-0.3)-- (1, -0.8);
\draw[shift={(-1,0.5)}, thick] (1, -0.2)--(1, 0.8);
\end{tikzpicture}
\end{center}

\begin{center}
\begin{tikzpicture}[x=1cm,y=1cm]

\begin{scope}[shift={(2,0)}, thick]
\clip(-1.8,-2)rectangle(3,2);
\draw (0,0) circle [x radius=2, y radius=1];
\end{scope}

\draw[shift={(2,0)}, yscale=cos(70), thick] (-1,0) arc (-180:0:1);
\draw[shift={(2,0)}, yscale=cos(70), thick] (-0.7,-0.6) arc (180:0:0.7);

\draw [thick] (0,0.3) .. controls (0.1,0.3) and (0.1,0.34) .. (0.21,0.446);
\draw [thick] (0,-0.3) .. controls (0.1,-0.3) and (0.1,-0.34) .. (0.21,-0.446);

\draw [xscale=cos(70), dashed, thick] (0,-0.3) arc (-90:90:0.3);
\draw [xscale=cos(70), thick] (0,0.3) arc (90:270:0.3);


\draw  (0.5, 0) node{{ 2}};

\draw [shift={(-1,-0.5)}, xscale=cos(70),  thick] (3,0.2) arc (-90:90:0.3);
\draw [shift={(-1,-0.5)}, xscale=cos(70), thick] (3,0.8) arc (90:270:0.3);

\draw [shift={(-1,-0.5)}, xscale=cos(70),  thick] (3,-0.8) arc (-90:90:0.3);
\draw [shift={(-1,-0.5)}, xscale=cos(70), thick] (3,-0.2) arc (90:270:0.3);

\draw    [shift={(-1,-0.5)}](-0.5, 0.2) node{{\tiny $\rro=-1$}};
\draw    [shift={(-1,-0.5)}](1, 1.15) node{{\tiny $t<0$}};
\draw    [shift={(-1,-0.5)}](1, -1) node {{\tiny $t>0$}};
\draw    [shift={(-1,-0.5)}](0.5, 0) node{{\tiny 1}};

\draw[shift={(-1,-0.5)},thick] (0,0) -- (1,0.8);
\draw[shift={(-1,-0.5)},thick] (0,0)-- (1, -0.8);
\draw[shift={(-1,-0.5)},thick] (1, 0.25)--(1, -0.25);

\end{tikzpicture}
\end{center}

\begin{center}
\begin{tikzpicture}[x=1cm,y=1cm]

\begin{scope}[shift={(2,0)}, thick]
\clip(-1.8,-2)rectangle(3,2);
\draw (0,0) circle [x radius=2, y radius=1];
\end{scope}

\draw[shift={(2,0)}, yscale=cos(70), thick] (-1,0) arc (-180:0:1);
\draw[shift={(2,0)}, yscale=cos(70), thick] (-0.7,-0.6) arc (180:0:0.7);

\draw [thick] (0,0.3) .. controls (0.1,0.3) and (0.1,0.34) .. (0.21,0.446);
\draw [thick] (0,-0.3) .. controls (0.1,-0.3) and (0.1,-0.34) .. (0.21,-0.446);

\draw [xscale=cos(70), dashed, thick] (0,-0.3) arc (-90:90:0.3);
\draw [xscale=cos(70), thick] (0,0.3) arc (90:270:0.3);


\draw  (0.5, 0) node{{ 2}};

\draw[shift={(-1,-0.5)},thick] (0,0.3) -- (1,0.8);
\draw[shift={(-1,-0.5)},thick] (0,-0.3)-- (1, -0.8);
\draw[shift={(-1,-0.5)},thick] (1, 0.25)--(1, -0.25);

\draw [shift={(-1,-0.5)},xscale=cos(70),  thick] (0,-0.3) arc (-90:90:0.3);
\draw [shift={(-1,-0.5)},xscale=cos(70), thick] (0,0.3) arc (90:270:0.3);

\draw [shift={(-1,-0.5)},xscale=cos(70),  thick] (3,0.2) arc (-90:90:0.3);
\draw [shift={(-1,-0.5)},xscale=cos(70), thick] (3,0.8) arc (90:270:0.3);

\draw [shift={(-1,-0.5)},xscale=cos(70),  thick] (3,-0.8) arc (-90:90:0.3);
\draw [shift={(-1,-0.5)},xscale=cos(70), thick] (3,-0.2) arc (90:270:0.3);

\draw    [shift={(-1,-0.5)}](0, 0.6) node{{\tiny $t>0$}};
\draw    [shift={(-1,-0.5)}](1, 1.15) node{{\tiny $t<0$}};
\draw    [shift={(-1,-0.5)}](1, -1) node {{\tiny $t>0$}};
\draw    [shift={(-1,-0.5)}](0.5, 0) node{{\tiny 2}};
\end{tikzpicture}
\end{center}

Since there is always a free $t>0$ boundary on the last pair of pants, we can do the same induction as in the genus 0 case to attach one more pair of pants.

\medskip

We conclude this discussion of low genus surfaces. This completes the proof of Theorem \ref{main}.

\begin{prop}\label{zero}
For every positive integer $k$, every integer $m$ such that $-2k\leq m \leq 2k$, and for each genus 0 and 1, there exists a punctured surface $\Sigma$ of the required genus with $\chi(\Sigma)=-k$ and a representation $\phi:\pi_1(\Sigma)\ra Sp(2,\br)$ which is hyperbolic or parabolic on the boundary and whose signature is $m$.
\end{prop}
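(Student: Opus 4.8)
The plan is to prove both the genus $0$ and genus $1$ cases by induction on $k=|\chi(\Sigma)|$, using the additivity of the signature under gluing along a simple closed curve together with the Euler characteristic $-1$ building blocks constructed in Subsections \ref{pair} and \ref{pun-torus}.

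For genus $0$, I would establish the strengthened claim that for each $k\geq 1$, every integer $m\in\{0,1,\ldots,2k\}$ is realized as the signature of a flat $Sp(2,\br)$-bundle over a genus $0$ surface with $|\chi|=k$, in such a way that all boundary holonomies are hyperbolic or parabolic and at least one boundary component carries a hyperbolic holonomy of positive trace. The base case $k=1$ is exactly the three pair-of-pants building blocks of signatures $0$, $1$, $2$, each of which retains a positive-trace hyperbolic boundary. For the inductive step, I start from a genus $0$ surface realizing a signature $m'\in\{0,\ldots,2k\}$, and glue one of the three pairs of pants of signature $0$, $1$, $2$ along its distinguished positive-trace hyperbolic boundary, matching it to a positive-trace hyperbolic boundary of the block. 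By additivity the glued surface has signature $m'$, $m'+1$, or $m'+2$, so together these realize all of $\{0,\ldots,2k+2\}$ on a surface with $|\chi|=k+1$; since each attached block still exhibits a free positive-trace hyperbolic boundary, the strengthened hypothesis is preserved and the induction continues.

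For genus $1$, the same scheme applies with the starting object being the twice-punctured torus, decomposed as a punctured torus glued to a pair of pants. The five explicit gluings displayed above realize signatures $0,1,2,3,4$ on this $|\chi|=2$ surface while keeping a positive-trace hyperbolic boundary available; attaching further pairs of pants exactly as in the genus $0$ argument then produces all of $\{0,\ldots,2k\}$ for every $k\geq 2$. Finally, to reach the negative values, I would compose each representation obtained above with an involution $\iota$ of $Sp(2,\br)$ as in Formula \ref{anti-holomorphic}, which reverses the sign of the signature and preserves the hyperbolic or parabolic type of the boundary holonomies; this yields every integer in $[-2k,-1]$ as well, completing the interval $[-2k,2k]$.

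The main obstacle is not any individual computation but the consistent bookkeeping of boundary holonomy types across the induction. The argument succeeds only because the building blocks were deliberately constructed so that each retains a boundary component with positive-trace hyperbolic holonomy; this is precisely the condition that allows the inductive gluing to be iterated indefinitely, and maintaining it simultaneously in the genus $0$ and genus $1$ inductions is what makes the low-genus cases of Theorem \ref{main} go through.
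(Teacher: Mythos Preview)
Your argument is correct and follows the paper's approach exactly: the same induction on $k$ for genus $0$ with the strengthened hypothesis of a free positive-trace hyperbolic boundary, the same five explicit gluings for the twice-punctured torus, the same continuation by attaching pairs of pants, and the same use of the involution $\iota$ for negative values. One small omission: your genus $1$ argument starts at $k=2$, but the statement requires $k=1$ as well; this is immediately covered by the punctured-torus building blocks of Subsection~\ref{pun-torus}, which already realize signatures $0,\pm 1,\pm 2$ with parabolic or hyperbolic boundary.
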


\begin{proof}
By composing with an involution of $Sp(2,\br)$ (Formula \ref{anti-holomorphic}), it suffices to realize the positive numbers $m$.
By the previous section, for genus 0 or 1 surfaces $\Sigma$ with $\chi(\Sigma)=-k$, every possible value for signature between 0 and $2k$ is attained by
some representation $\phi:\pi_1(\Sigma)\ra Sp(2,\br)$. By construction, the boundary holonomy is always hyperbolic or parabolic.
\end{proof}




 

\section{Boundary elliptic representations in $SL(2,\br)$}
\label{boundaryelliptic}

\subsection{Scheme of the proof of Theorem \ref{elliptic}}

Signatures of boundary elliptic representations are always even, Proposition \ref{even}. The higher genus case is handled in Proposition \ref{elliptichigher}. Then we study the remainder mod $4$ of the signature in genus $0$. Both fully boundary elliptic representations and representations allowing one hyperbolic boundary holonomy are described in Corollaries \ref{ellipticgenus0} and \ref{ellipticgenus0}. The latter case is used to study genus one surfaces, viewed as connected sums of a genus $0$ surface and a punctured torus, in Corollary \ref{elliptgenus1}.

\subsection{A necessary condition}

Say an element of $SL(2,\br)$ is \emph{strictly parabolic} if it is parabolic with trace equal to $2$. In other words, one excludes parabolics with eigenvalues $-1$.

Let $\Sigma$ be an oriented surface with boundary. Say a representation $\pi_1(\Sigma)\ra SL(2,\br))$ is 
\begin{itemize}
  \item \emph{boundary non-s-parabolic} if its values on boundary components are never strictly parabolic elements of $SL(2,\br)$.
  \item \emph{boundary elliptic} if its values on boundary components are all elliptic elements of $SL(2,\br)$.
\end{itemize}

\begin{prop}\label{even}
Let $\Sigma$ be an oriented surface with boundary. Boundary non-s-parabolic representations $\pi_1(\Sigma)\ra SL
(2,\br)$ have even signatures.
\end{prop}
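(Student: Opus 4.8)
The plan is to combine the index-theoretic formula $\text{sign}(\phi)=2\op{T}+\rro_\phi(\p\Sigma)$ (Formula \ref{formula}) with the Burger--Iozzi--Wienhard expression of the Toledo invariant through rotation numbers (Subsection \ref{rot}), and then reduce everything modulo $2$ to a statement about individual boundary holonomies. Since $\Sigma$ has boundary, $\pi_1(\Sigma)$ is free, so $\phi$ admits a lift $\tilde\phi:\pi_1(\Sigma)\to\tilde G$ and the formula $\op{T}(\phi)=-\sum_i\widetilde{Rot_\kappa}(\tilde\phi(C_i))$ applies. Substituting, I would write
\[
\text{sign}(\phi)=-2\sum_i\widetilde{Rot_\kappa}(\tilde\phi(C_i))+\sum_i\rro(\phi(C_i)).
\]
Because $\widetilde{Rot_\kappa}$ is a continuous lift of $Rot_\kappa$, for each $i$ one has $\widetilde{Rot_\kappa}(\tilde\phi(C_i))\equiv Rot_\kappa(\phi(C_i))\pmod 1$, hence $2\widetilde{Rot_\kappa}(\tilde\phi(C_i))\equiv 2Rot_\kappa(\phi(C_i))\pmod 2$. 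This is the only place the lift enters, and it shows that the integer $\text{sign}(\phi)$ satisfies
\[
\text{sign}(\phi)\equiv\sum_i\big(\rro(\phi(C_i))-2Rot_\kappa(\phi(C_i))\big)\pmod 2,
\]
where each summand is now a well-defined element of $\br/2\bz$ depending only on the conjugacy class of $\phi(C_i)$.

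Thus it would suffice to show that the class function $s(g):=\rro(g)-2Rot_\kappa(g)\in\br/2\bz$ vanishes on every element of $SL(2,\br)$ that is not strictly parabolic. I would check this case by case, using the table \ref{etasp} for $\rro$ and the characterization of $Rot_\kappa$ in Subsection \ref{rot}: for hyperbolic $g$ and for $g=\pm I$ both $\rro(g)=0$ and $Rot_\kappa(g)=0$; for a parabolic with trace $-2$ the elliptic part of the Jordan decomposition is $-I$, so again $\rro(g)=0$ and $Rot_\kappa(g)=Rot_\kappa(-I)=0$; and for an elliptic $g$ with eigenvalue $e^{i\theta}$, $\theta\in(0,\pi)\cup(\pi,2\pi)$, one has $\rro(g)=2(1-\theta/\pi)$ while, with the orientation fixed in Subsection \ref{rot}, $Rot_\kappa(g)\equiv-\theta/\pi\pmod 1$, so that $2Rot_\kappa(g)\equiv\rro(g)\pmod 2$ and $s(g)=0$. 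In every non-strictly-parabolic case $s(g)=0$, whence $\text{sign}(\phi)\equiv 0\pmod 2$, i.e.\ the signature is even.

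The main obstacle is the elliptic computation, specifically matching the two orientation conventions so that $Rot_\kappa(g)\equiv-\theta/\pi\pmod 1$ rather than $+\theta/\pi$: with the wrong sign one gets $s(g)\equiv 2-4\theta/\pi$, which is not even constant in $\theta$. I would pin the sign down from the normalization $u_\kappa(k)=k^2$ together with the chosen positive direction on $\p$ (which forces $\widetilde{Rot_\kappa}(z)=1$), and cross-check it against the hyperbolic cone-surface computation of Section \ref{relative}, where $\op{T}=-\chi-\sum_i\phi_i/2\pi$ and $\rro(\p\Sigma)=\sum_i\phi_i/\pi$ force exactly $\rro=2Rot_\kappa$ on the elliptic boundary holonomies. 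Finally, the same computation explains the hypothesis: for a strictly parabolic $g$ the elliptic part of the Jordan decomposition is $I$, so $Rot_\kappa(g)=0$ while $\rro(g)=\pm 1$, giving $s(g)=\pm 1\neq 0$ in $\br/2\bz$; these are precisely the elements that the non-s-parabolic condition excludes.
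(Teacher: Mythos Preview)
Your proof is correct and takes a genuinely different route from the paper's own argument. The paper proves Proposition \ref{even} by a deformation argument: it observes that $\rro$ is continuous modulo $2$ away from strictly parabolic elements, hence $\mathrm{sign}\bmod 2$ is locally constant on boundary non-s-parabolic representations, and then connects an arbitrary such representation to the trivial one by a real analytic arc in $G^{2g+n-1}$, carefully tracking the (even) jumps of $\rro_\phi(\partial\Sigma)$ each time a boundary holonomy crosses the strictly parabolic wall $\{tr=2\}$.

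Your argument instead localizes the problem to a conjugacy-class invariant $s(g)=\rro(g)-2Rot_\kappa(g)\in\br/2\bz$ and shows $s\equiv 0$ on the complement of the strictly parabolic locus, using the Burger--Iozzi--Wienhard rotation-number formula for the Toledo invariant. This is more direct and avoids the wall-crossing bookkeeping entirely; it also makes transparent \emph{why} strictly parabolic elements are the only obstruction (there $s(g)=\pm 1$). The price you pay is the orientation-sensitive elliptic computation $Rot_\kappa(R(\theta))\equiv -\theta/\pi\pmod 1$. Your proposed cross-check against the cone-surface computation in Section~\ref{relative} does settle this: for $\Sigma_{1,1}$ with a cone point of angle $2\pi-2\theta$, the paper gives $\op{T}=\theta/\pi$ and $\phi(C)=R(\theta)$, so $\widetilde{Rot_\kappa}(\tilde\phi(C))=-\theta/\pi$; since lifts of $R(\theta)$ differ by $z^{2m}$ and hence their $\widetilde{Rot_\kappa}$ by even integers, this forces $Rot_\kappa(R(\theta))\equiv -\theta/\pi\pmod 1$, exactly the sign you need. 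With that in hand, $s(R(\theta))=2(1-\theta/\pi)-2(-\theta/\pi)=2\equiv 0$ and the remaining cases are immediate.
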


On $G=SL(2,\br)$, the rho invariant is discontinuous only at $I$ and at strictly parabolic elements. Furthermore, the rho invariant mod $2\bz$ is continuous on the union of $\{I\}$ and of elliptic, hyperbolic and nonstrictly parabolic elements. Since the Toledo invariant is continuous, the signature mod $2$ is locally constant on the set of boundary non-s-parabolic representations. We need work a bit more to show that it is actually constant.

Let $\mathcal{P}\subset G^{n}$ denote the set of $n$-tuples where at least one entry is strictly parabolic. The closure $\bar{\mathcal{P}}=\{\exists k=1,\ldots,n,\,tr(C_k)=2\}$ is a real analytic subset of $G^n$. Using the free generators $(A_1,B_1,\ldots,A_g,B_g,C_1,\ldots,C_{n-1})$, let us view $Hom(\pi_1(\Sigma),G)$ as $G^{2g+n-1}$. The value of the representation on boundary components is given by the real analytic map
$$
F(A_1,B_1,\ldots,A_g,B_g,C_1,\ldots,C_{n-1})=(C_1,\ldots,C_{n-1},(\prod_{i=1}^{g} [A_i, B_i]\prod_{k=1}^{n-1} C_k)^{-1}).
$$

Let $\lambda:\br\to G^{2g+n-1}$ be a real analytic arc which is not contained in $F^{-1}(\bar{\mathcal{P}})$. Let us show that sign mod $2$ is constant on $[0,1]\setminus (F\circ\lambda)^{-1}(\mathcal{P})$. 
The real analytic subset $J=\{0,1\}\cup([0,1]\cap(F\circ\lambda)^{-1}(\bar{\mathcal{P}}))$ is finite. 

Let $a<b<c$ be three consecutive elements of $J$. On each of $(a,b)$ and $(b,c)$, the holonomy of each boundary component is either elliptic or hyperbolic or nonstrictly parabolic, so its rho invariant varies continuously. An elliptic holonomy is a rotation whose angle tends either to $0$ or $2\pi$ as $t$ tends to $b$, so its rho invariant tends either to $2$ or to $-2$. Thus, when traversing $b$, the rho invariant of each boundary component jumps by some number in $\{-4,-2,0,2,4\}$. The whole rho invariant of the boundary $\rro(\p\Sigma)$ jumps by an even integer. Since the Toledo term is continuous, the signature mod $2$ does not change when one passes from $(a,b)$ to $(b,c)$, i.e., it is constant on $(a,b)\cup(b,c)$. 

Let $a<b$ be two consecutive elements of $J$. If $b\in (F\circ\lambda)^{-1}(\bar{\mathcal{P}}\setminus\mathcal{P})$, that is the holonomy of some boundary component is equal to $I$, the rho invariant of this component again possibly jumps by $\pm 2$ when passing from $(a,b)$ to $b$. So, in this case, the signature mod $2$ is constant on $(a,b]$. This completes the proof that sign mod $2$ is constant on $[0,1]\setminus (F\circ\lambda)^{-1}(\mathcal{P})$. 

Since $F^{-1}(\bar{\mathcal{P}})$ has positive codimension, every pair of points of $G^{2g+n-1}$ can be joined by a real analytic arc which is not contained in $F^{-1}(\bar{\mathcal{P}})$. This shows that signature mod $2$ is constant on the complement of $F^{-1}(\mathcal{P})$, i.e. on the set of boundary non-s-parabolic representations. The trivial representation is non-s-parabolic, its Toledo and rho invariants both vanish, so its signature vanishes. One concludes that signatures of non-s-parabolic representations are equal to $0$ mod $2$.

\subsection{Higher genus}

\begin{prop}\label{elliptichigher}
Let $\Sigma$ be an oriented surface with boundary, of genus $>1$. Every even integer between $2\chi(\Sigma)$ and $-2\chi(\Sigma)$ is the signature of a boundary elliptic representation $\pi_1(\Sigma)\ra SL(2,\br)$.
\end{prop}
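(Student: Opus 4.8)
The plan is to realise the prescribed signatures by additivity of the signature under cutting, organising the whole argument according to the residue of the signature modulo $4$, and deducing the negative values from the positive ones by composing with the orientation‑reversing involution $\iota$ of Formula \ref{anti-holomorphic} (which sends boundary‑elliptic representations to boundary‑elliptic representations and changes the sign of the signature). Throughout I would use $\mathrm{sign}=2\op{T}+\rro$ (Formula \ref{formula}), the continuity of $\op{T}$, and the values of $\rro$ on elliptic elements.

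First I would dispose of the values that are easiest to reach, namely the two extrema $\pm 2|\chi(\Sigma)|$ and the whole residue class $\equiv 2n\pmod 4$. Write $\Sigma=\Sigma_0\#\Sigma_{1,1}^{(1)}\#\cdots\#\Sigma_{1,1}^{(g)}$, where $\Sigma_0$ is a sphere with $n+g$ holes, $n$ of them being the true boundary of $\Sigma$ and the other $g$ being glued to once‑holed tori. On each handle I would place the hyperbolic cone representation $\psi_{\theta_j}$ of Section \ref{relative}, whose boundary holonomy is the elliptic rotation of angle $\theta_j$ and whose signature is $+2$, or its composite with $\iota$, of signature $-2$. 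On $\Sigma_0$ I would take a boundary‑elliptic $SO(2)$‑representation furnished by Proposition \ref{prop-ell}, choosing the angles so that the holonomy along each interior curve is conjugate to the inverse of the matching $\psi_{\theta_j}$ boundary holonomy; conjugating one side of each gluing then makes the pieces agree. Since the $\rro$‑contributions of an interior curve and of its reverse cancel, additivity gives the total signature as the sum of the core signature and the $\pm2$'s, and letting the signs and the core angles vary yields every value $\equiv 2n\pmod 4$ in $[2\chi(\Sigma),-2\chi(\Sigma)]$, including $\pm 2|\chi(\Sigma)|$.

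The hard part, and the only place where the hypothesis $g>1$ is essential, is the complementary residue class $\equiv 2n+2\pmod 4$. The difficulty is a genuine conservation law: every boundary‑elliptic building block of Euler characteristic $-1$ (an all‑elliptic pair of pants, a $\psi_\theta$ handle) has signature $\pm 2$, and when two pieces are glued along an interior \emph{parabolic} curve the forced matching $Par_+\leftrightarrow Par_-$ makes their odd $\rro$‑contributions cancel; hence any representation assembled from such pieces has signature $\equiv 2n\pmod 4$. This ``signature mod $4$'' conservation is exactly what makes the genus $0$ and genus $1$ answers of Theorem \ref{elliptic} strictly smaller, so the complementary residue cannot be obtained by naive all‑elliptic gluing. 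To break it I would instead start one step earlier, from Proposition \ref{higher}: for genus $>1$ it produces, for every integer $m'$ in $[2\chi(\Sigma),-2\chi(\Sigma)]$ and in particular for the \emph{odd} ones, a representation that is hyperbolic or parabolic on each boundary, with boundary holonomies pinched to strictly parabolic elements of prescribed type. I would then deform each such boundary holonomy slightly past the strictly parabolic locus into an elliptic rotation: the Toledo invariant stays continuous while $\rro$ of that boundary jumps, and once all boundaries are elliptic Proposition \ref{even} guarantees the signature is an even integer equal to $m'\pm1$, the sign of the jump being governed by whether the boundary was $Par_+$ or $Par_-$.

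I expect the main technical work to be the control of these jumps: one must show that by choosing the parabolic types on the various boundary components (which the constructions underlying Proposition \ref{higher} make adjustable) one can steer the deformation so as to land on a prescribed even value, reaching both $m'-1$ and $m'+1$ from a given odd $m'$, and that the single relator $\prod[\,\cdot\,,\,\cdot\,]\prod C=1$ can be maintained along the deformation. Granting this, letting $m'$ run through the odd integers in the Milnor–Wood interval sweeps out all even values $\equiv 2n+2\pmod 4$ up to $\pm(2|\chi(\Sigma)|-2)$, which together with the previous paragraph and the involution $\iota$ exhausts every even integer in $[2\chi(\Sigma),-2\chi(\Sigma)]$.
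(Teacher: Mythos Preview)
Your first half (the residue class $\equiv 2n\pmod 4$) is reasonable, and your diagnosis that $g>1$ is needed precisely for the complementary residue class is correct. The gap is in the second half, and you flag it yourself: the sentence beginning ``Granting this\ldots'' is carrying the entire argument. To push a boundary holonomy from strictly parabolic into the elliptic region while maintaining the relator $\prod[A_i,B_i]\prod C_j=I$, you must move the $(A_i,B_i)$ accordingly, and that is exactly a submersion/regular-point argument of the kind you are trying to bypass. Moreover, Proposition~\ref{higher} as stated does not promise that all $n$ boundaries can be made simultaneously parabolic of independently prescribed $Par_\pm$ types; its proof pinches $n-1$ of them one at a time while the last is determined by the relator. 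Tracking whether each crossing produces a $\rro$-jump of $\pm1$ or $\pm3$, and steering the sum of these jumps to land on a prescribed even target, is at least as delicate as what the paper does, and you have not shown it can be done.

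The paper's route is considerably shorter and does not split by residue. View $\Sigma$ as a closed surface $S$ of genus $g\ge2$ with $n$ disks removed. Start from a \emph{regular point} $\phi\in\mathrm{Hom}(\pi_1(S),G)$ with Euler number $b$; Goldman supplies every $b\in[\chi(S),-\chi(S)]$, and Section~\ref{highergenus} shows regularity can be arranged even when $b=0$ (this is where $g>1$ enters). Because $\Phi:G^{2g}\to G$ is a submersion at $\phi$, one can deform $\phi$ directly to representations $\psi_t$ with all $n$ boundary holonomies elliptic of angles $\theta_k(t)$, arranged so that $\sum_k\theta_k(0)=2\pi a$ for any chosen $a\in\{0,\dots,n\}$. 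Continuity of $\mathrm{T}$ and the explicit $\rro$ for elliptics give $\mathrm{sign}(\psi_t)= 2b+2n-4a$ for small $t$. Letting $b$ range over $[\chi(S),-\chi(S)]$ and $a$ over $\{0,\dots,n\}$ produces overlapping arithmetic progressions (this overlap uses $|\chi(S)|\ge2$) whose union is exactly the even integers in $[2\chi(\Sigma),-2\chi(\Sigma)]$. Both residue classes fall out at once, because $b$ of either parity is available precisely when $g>1$.
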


Let $G=SL(2,\br)$. We view $\Sigma$ as obtained by removing $n$ disjoint disks from a closed orientable surface $S$ of genus $>1$.
As in Section 3, we can start with a representation $\phi\in\text{Hom}(\pi_1(S), G)$ which is a regular point of the map
$$
\Phi:G^{2g}\ra G,\ \Phi(A_1,B_1,\ldots,A_g,B_g)=\prod_{i=1}^{g} [A_i, B_i].
$$
Denote by $c_1,\ldots,c_n$ the boundary components of $\Sigma$, $n\geq 1$. Let $\psi_0\in\text{Hom}(\pi_1(\Sigma), G)$ denote the representation such that 
\begin{align*}
\forall i=1,\ldots,g,\quad & \psi_0(A_i)=\phi(A_i),\quad \psi_0(B_i)=\phi(B_i),\\
\forall k=1,\ldots,n,\quad & \psi_0(C_k)=I.
\end{align*}
Let us pick continuous functions $\theta_1,\ldots,\theta_n:\br_+\ra[0,2\pi]$, such that 
$$
\sum_{k=1}^n \theta_k(0)=2a\pi
$$
for some integer $a$ between $0$ and $n$, and $\theta_k(t)\in(0,2\pi)$ for $t>0$. For $k=1,\ldots,n$, let
\begin{equation}\label{eqn7}
  \psi_t(c_k)\sim \begin{pmatrix}
  \cos\theta_k(t)&-\sin\theta_k(t) \\
  \sin\theta_k(t)&\cos\theta_k(t)
\end{pmatrix}.
\end{equation}
By construction, $\Phi$ is a submersion near $(\phi(A_1),\phi(B_1),\cdots,\phi(A_g),\phi(B_g))$, so there exists a continuous lift
$$
t\mapsto (\psi_t(A_1),\psi_t(B_1),\cdots,\psi_t(A_g),\psi_t(B_g))
$$ 
of
$$
t\mapsto\left(\prod_{k=1}^{n} \psi_t(c_k)\right)^{-1}
$$
under $\Phi$. This produces a continuous family of representations $\psi_t\in\text{Hom}(\pi_1(\Sigma), G)$ which are elliptic on the boundary.

Since, for $t>0$,
\begin{equation*}
  \rro_{\psi_t}(\p\Sigma)=\sum_{k=1}^n 2(\mathrm{sgn}(\theta_k(t))-\frac{\theta_k(t)}{\pi})=2n-\frac{2}{\pi}\sum_{k=1}^n \theta_k(t),
\end{equation*}
$$
\lim_{t\to 0} \rro_{\psi_t}(\p\Sigma)=2n-4a.
$$
Using \cite{KPW}'s formula
\begin{equation*}
   \mr{sign}(\psi_t)=2\text{T}(\psi_t)+\rro_{\psi_t}(\p\Sigma),
\end{equation*}
the continuity of the Toledo invariant implies that
\begin{align*}
\lim_{t\to 0}  \mr{sign}(\psi_t)=2\text{T}(\psi_0)+2n-4a.
\end{align*}
Since, according to Goldman, 
\begin{align*}
\text{T}(\psi_0)=\text{T}(\phi)=\text{Euler}(\phi)
\end{align*}
takes every integer value between $\chi(S)$ and $-\chi(S)$, for $t$ small, $\mr{sign}(\psi_t)$ takes all values of the form
$$
2(n+b-2a), \quad\text{for integers }a\in\{0,\ldots,n\}\text{ and }b\in[\chi(S),-\chi(S)].
$$
Obviously, taking $a=0$ and letting $b$ vary between $\chi(S)$ and $-\chi(S)$, $n+b-2a$ covers the interval $[n-|\chi(S)|,n+|\chi(S)|]$. Since $\chi(S)<0$, the length of this interval is at least $3$. Subtracting $2,4,6,\ldots$ yields overlapping intervals whose union covers $[n-|\chi(S)|-2,n+|\chi(S)|]$, $[n-|\chi(S)|-4,n+|\chi(S)|]$,...,$[n-|\chi(S)|-2n,n+|\chi(S)|]=[\chi(\Sigma),-\chi(\Sigma)]$. Hence $2(n+b-2a)$ takes every even value in $[2\chi(\Sigma),-2\chi(\Sigma)]$.

\subsection{Genus 0}

\begin{prop}\label{deformation}
Let $\Sigma$ be an oriented surface with boundary, of genus $0$. Let $\phi_0:\pi_1(\Sigma)\to SL(2,\br)$ be a representation which is elliptic on all boundary components except one of them, $C_1$, where the representation is either elliptic or hyperbolic. Without changing the signature mod $4$, and without changing the representation on $C_1$, one can deform $\phi_0$ to $\phi_1:\pi_1(\Sigma)\to SL(2,\br)$ which has the following property:
\begin{itemize}
  \item If $\phi_0$ is elliptic on $C_1$, then $\phi_1$ is boundary elliptic and contained in a conjugate of $SO(2)$.
  \item If $\phi_0$ is hyperbolic on $C_1$, then $\phi_1$  is elliptic or $-I$ on other boundary components and stabilizes a geodesic in hyperbolic plane.
\end{itemize}
\end{prop}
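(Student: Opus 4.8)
The plan is to exploit that $\Sigma$ has genus $0$, so $\pi_1(\Sigma)$ is free on $C_1,\dots,C_{n-1}$ and a representation is just the data $g_k:=\phi(C_k)$ subject only to $g_1\cdots g_n=I$. Fixing $\phi(C_1)$ is the same as prescribing the product $g_2\cdots g_n=E:=\phi(C_1)^{-1}$; I keep $g_2,\dots,g_{n-1}$ as free parameters and regard $g_n=(g_2\cdots g_{n-1})^{-1}E$ as determined. I would then slide the fixed points of the elliptic generators into a common position while keeping $\phi(C_1)$ literally constant. In Case 1, $E$ is elliptic with a unique fixed point $p_1\in\mathbb{H}^2$, and I aim for the representation in which every $g_k$ is a rotation about $p_1$; it then lies in $\op{Stab}(p_1)$, a conjugate of $SO(2)$, and is boundary elliptic after a generic adjustment of angles to avoid $g_k=I$. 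In Case 2, $E$ is hyperbolic with axis $\gamma$, and I aim for the representation in which each $g_k$ ($k\ge 2$) is a half-turn (trace-$0$ flip) about a point of $\gamma$, or $-I$; all such elements stabilize $\gamma$, as does $\phi(C_1)$.

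The geometric engine is the two-reflection calculus. Writing a rotation $R_{p,\alpha}=\sigma_a\sigma_b$ as a product of reflections in geodesics through $p$ meeting at angle $\alpha/2$, and taking the shared line to be the geodesic $\overline{pq}$, the product of two rotations collapses to $R_{p,\alpha}R_{q,\beta}=\sigma_a\sigma_d$ with $a\ni p$ and $d\ni q$. Its type is governed by whether $a$ and $d$ meet inside $\mathbb{H}^2$, at infinity, or are ultraparallel, and for fixed angles this is monotone in $d(p,q)$: decreasing $d(p,q)$ can only make the product more elliptic. Hence sliding $q$ toward $p$ keeps an already elliptic product elliptic, limiting to $R_{p,\alpha+\beta}$ at $d(p,q)=0$; the analogous computation with half-turns controls the ultraparallel configuration needed to push centers onto $\gamma$ in Case 2.

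To control the signature I would use Formula \ref{formula}, $\op{sign}(\phi)=2\,\op{T}(\phi)+\rro_\phi(\p\Sigma)$, together with the continuity of the Toledo invariant. While every boundary holonomy remains elliptic or equal to $\pm I$ (where Table \ref{etasp} gives $\rro=0$, matching the elliptic limit), the quantity $\rro_\phi(\p\Sigma)=\sum_k 2(1-\theta_k/\pi)$ is continuous, so the integer $\op{sign}(\phi_t)$ is constant; the only discontinuity within the elliptic regime occurs when some angle $\theta_k$ winds through $0\equiv 2\pi$, i.e.\ the holonomy crosses $I$, where $\rro$ jumps by $\pm 4$. Thus the signature is preserved modulo $4$ as long as no boundary holonomy makes an \emph{elliptic}$\leftrightarrow$\emph{hyperbolic} transition, which would force the forbidden $\rro$-jump of $\pm 2$ (exactly the dichotomy isolated in the proof of Proposition \ref{even}). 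Winding a single generator through $I$ is then the permitted device for moving between distinct Toledo components while staying within this tolerance, which is why the conclusion is stated only modulo $4$ rather than on the nose.

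The hard part will be the \emph{determined} holonomy $g_n=\phi(C_n)$, which is coupled to the others by the product relation and cannot be prescribed directly. Sliding one center with the rest fixed changes $g_n^{-1}=L\,g_3\,R$ for fixed $L,R$, which is not literally a two-rotation product, so the monotonicity above does not apply verbatim and $g_n$ could in principle drift into the hyperbolic locus across a strictly parabolic value. I would organize the argument as an induction on the number of boundary components: the pair of pants of Subsection \ref{pair} is the base case, and once two generators are made co-centered their product is a single rotation, reducing to a genus-$0$ representation with one fewer boundary component and the same prescribed product $E$. The genuine work, at each elementary slide, is to choose the path of centers (and, where a change of Toledo component is unavoidable, a single controlled winding through $I$) so that $g_n$ only ever transits through $I$ and never through a strictly parabolic element into the hyperbolic range; verifying that such a path always exists, by applying the monotone two-reflection estimate to the relevant adjacent pair together with a genericity argument for the remaining factors, is the crux of the proof.
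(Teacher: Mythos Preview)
Your overall architecture---fix $\phi(C_1)$, slide the centers of the other elliptic generators to a common point (to $p_1$ in the elliptic case, onto the axis $\gamma$ in the hyperbolic case), and in Case~2 push the angles to half-turns---is exactly the paper's plan. The two-reflection calculus you invoke is a pleasant way to see why moving centers together preserves ellipticity of a \emph{pair}, but you then correctly observe that this monotonicity says nothing directly about the dependent holonomy $g_n=(g_2\cdots g_{n-1})^{-1}E$.

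Here is the gap. You propose to \emph{avoid} any elliptic\,$\leftrightarrow$\,hyperbolic transition of $g_n$, arranging that along the deformation $g_n$ only passes through $I$ (a $\pm 4$ jump) and never through a strictly parabolic element (a $\pm 2$ jump). You call this ``the crux of the proof'' and outline an induction plus genericity argument, but you do not establish that such a path exists, and there is no obvious reason it should: the trace of $g_n$ is a nontrivial function of all the sliding centers simultaneously, and forcing it to stay below $2$ (or to cross $2$ only at the identity) is a strong constraint you have not verified.

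The paper sidesteps this difficulty entirely. It \emph{allows} $\mathrm{trace}(\phi_t(C_n))$ to cross $2$ freely during the deformation; each crossing produces a $\pm 2$ jump in the signature. The key observation is that at both endpoints $t=0$ and $t=1$ (resp.\ $t=2$ in the hyperbolic case, after pushing angles to $\pi/2$ or $3\pi/2$) the element $\phi_t(C_n)$ is elliptic, i.e.\ has trace${}<2$. Therefore the number of times the trace crosses $2$ is \emph{even}, and an even number of $\pm 2$ jumps always sums to a multiple of $4$. This parity argument replaces your unproven avoidance step with a two-line endpoint check, and is the missing idea in your proposal.
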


\begin{proof}
Let $\Sigma$ be an oriented surface of genus $0$ with $n$ boundary components. We pick representatives $C_i\in\pi_1(\Sigma)$ of the boundary components and view $\pi_1(\Sigma)$ as freely generated by $C_1,\ldots,C_{n-1}$. 

Let $\phi_0:\pi_1(\Sigma)\to SL(2,\br)$ be a representation which is elliptic on $C_2,\ldots,C_n$. Let $\phi_0(C_i)$, $i=2,\ldots,n$ (eventually $\phi_0(C_1)$ as well) be conjugate to 
$$\begin{pmatrix}
\cos(\theta_i) &  -\sin(\theta_i)  \\
\sin(\theta_i)      & \cos(\theta_i) 
\end{pmatrix}.
$$
By slightly perturbing $\phi_0(C_2)$, one can assume that 
\begin{equation}
\label{sumtheta}
   \sum_{i=1}^{n-1}\theta_i\not=0\mod 2\pi. \end{equation}
Those $\phi_0(C_i)$ which are elliptic act on hyperbolic plane by rotations with centers $\Omega_i$. If $\phi_0(C_1)$ is hyperbolic, we pick a point $\Omega_1$ on its axis. Let $h_{i,t}$ denote the $1$-parameter group of translations along the geodesic $\Omega_i\Omega_1$ such that $h_{i,1}(\Omega_i)=\Omega_1$. We deform $\phi_0$ into a family $\phi_t$ by conjugating $\phi_t(C_i)$, $i=2,\ldots,n-1$ by $h_{i,t}$. In the course of the deformation, for at most finitely many times $0<t_1<\cdots<t_k<1$, the trace 
$$
\mathrm{trace}(\phi_{t}(C_n))=\mathrm{trace}(\prod_{i=1}^{n-1}\phi_t(C_i))^{-1}
$$
crosses $2$. These are the only times when the signature of $\phi_t$ can jump. 

As $t$ crosses $t_j$, the Toledo invariant is continous, so are the rho invariants of $\phi_t(C_i)$, $i=1,\ldots,n-1$, only $\rro(\phi_t(C_n))$ can be discontinuous, and this happens only if $\mathrm{trace}(\phi_t(C_n))-2$ changes sign. If $\mathrm{trace}(\phi_t(C_n))$ increases, $\phi_t(C_n)$ passes from an elliptic element close to $I$ to a hyperbolic element, $\rro(\phi_t(C_n))$ varies from near $\pm 2$ to $0$, so  or conversely. Therefore $\rro(\phi_t(C_n))$ incurs a jump of $\pm 2$ mod $4$.

\medskip

If $\phi_0(C_1)$ is elliptic, $\mathrm{trace}(\phi_t(C_n))<2$ at $t=0$ and at $t=1$. Indeed, at $t=1$, all $\phi_1(C_i)$, $i=1,\ldots,n-1$, are elliptics fixing $\Omega_1$, so does their product $\phi_1(C_n)^{-1}$ which, by construction (Equation \ref{sumtheta}), is not equal to $I$. Thus $\mathrm{sign}(\phi_t)$ incurs an even number of $\pm 2$ jumps mod $4$, so $\mathrm{sign}(\phi_0)=\mathrm{sign}(\phi_1)$ mod $4$ in this case. Since all $\phi_1(C_i)$ are elliptic and fix $\Omega_1$, they belong to the same conjugate of $SO(2)$. 

\medskip

If $\phi_0(C_1)$ is hyperbolic, all $\phi_1(C_i)$, $i=2,\ldots,n-1$, are elliptic and fix $\Omega_1$. 

Assume first that $n$ is odd. We continue deforming by letting each $\theta_i$, $i=2,\ldots,n-1$, move to $\frac{\pi}{2}$ or $\frac{3\pi}{2}$ (depending wether $\theta_i<\pi$ or $\theta_i>\pi$). The resulting $\phi_2(C_i)$ act on hyperbolic plane by the central inversion through $\Omega_1$. Since there is an odd number of them, their product is again a central inversion through $\Omega_1$. The composition with $\phi_2(C_1)=\phi_0(C_1)$ stabilizes the axis $\delta$ of $\phi_2(C_1)$ and reverses its orientation, hence has a fixed point $\Omega'_n$ on $\delta$. It follows that $\phi_2(C_n)$ acts on hyperbolic plane as the central inversion through $\Omega'_n$. In particular, it is elliptic. Therefore, the number of times $\mathrm{trace}(\phi_t(C_n))$ crosses $2$ is even, and $\mathrm{sign}(\phi_0)=\mathrm{sign}(\phi_2)$ mod $4$ again. By construction, all $\phi_2(C_i)$ stabilize $\delta$, hence the whole representation stabilizes $\delta$.

Finally, assume that $n$ is even. Then we move $\theta_2$ continuously to $\pi$ and each $\theta_i$, $i=3,\ldots,n-1$, is moved to $\frac{\pi}{2}$ or $\frac{3\pi}{2}$. The resulting $\phi_2(C_i)$ act on hyperbolic plane either trivially or by the central inversion through $\Omega_1$, their product acts by a central inversion through $\Omega_1$ as well, and $\phi_2(C_n)$ acts as a central inversion through an other point $\Omega'_n$ of $\delta$. Since it is elliptic, the number of times $\mathrm{trace}(\phi_t(C_n))$ crosses $2$ is even, and $\mathrm{sign}(\phi_0)=\mathrm{sign}(\phi_2)$ mod $4$. Again, the whole representation $\phi_2$ stabilizes $\delta$. 
\end{proof}

\begin{cor}\label{ellipticgenus0}
In genus $0$, the values achieved by the signatures of boundary elliptic representations coincide with the signatures of $SO(2)$-representations, i.e. exactly all integers of the form $2\chi(\Sigma)+4a$, $a=0,\ldots,|\chi(\Sigma)|$. 
\end{cor}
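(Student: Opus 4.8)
The plan is to obtain the Corollary by assembling three results already in hand: the explicit signature computation for $SO(2)$-representations (Proposition \ref{prop-ell}), the parity statement (Proposition \ref{even}), and above all the deformation result (Proposition \ref{deformation}), which carries out the genuinely substantial work. The idea is to show that the set of signatures of boundary elliptic representations both contains and is contained in the set of signatures of $SO(2)$-representations, hence coincides with it; the explicit description of the latter then finishes the proof.

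First I would settle achievability. A boundary elliptic $SO(2)$-representation is a fortiori a boundary elliptic $SL(2,\br)$-representation, so Proposition \ref{prop-ell}(2) already realizes every integer of the form $2n-4a$, $a=1,\ldots,n-1$. Substituting $\chi(\Sigma)=2-n$ in genus $0$, this is precisely the set $\{2\chi(\Sigma)+4a:a=0,\ldots,|\chi(\Sigma)|\}$, so every target value occurs as the signature of a boundary elliptic representation.

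For the converse, let $\phi_0$ be an arbitrary boundary elliptic representation of $\pi_1(\Sigma)$ in genus $0$. Choosing any boundary component to play the role of the distinguished $C_1$ (in its elliptic case, which applies since all boundary holonomies are elliptic), Proposition \ref{deformation} deforms $\phi_0$ to a representation $\phi_1$ contained in a conjugate of $SO(2)$, with $\text{sign}(\phi_1)\equiv\text{sign}(\phi_0)\pmod 4$. As signature is a conjugacy invariant, $\phi_1$ may be regarded as a genuine boundary elliptic $SO(2)$-representation, so the computation underlying Proposition \ref{prop-ell}(2) gives $\text{sign}(\phi_1)=2n-4a\equiv 2n\equiv 2\chi(\Sigma)\pmod 4$. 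Therefore $\text{sign}(\phi_0)\equiv 2\chi(\Sigma)\pmod 4$. Combined with the Milnor-Wood bound \eqref{formula2}, namely $|\text{sign}(\phi_0)|\leq 2|\chi(\Sigma)|$, this confines $\text{sign}(\phi_0)$ to the finite set $\{2\chi(\Sigma)+4a:a=0,\ldots,|\chi(\Sigma)|\}$, which is exactly the set already shown to be achievable.

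In short, the Corollary is a bookkeeping synthesis: the only delicate step—propagating the congruence class mod $4$ along a deformation that ends inside a conjugate of $SO(2)$—has already been handled in Proposition \ref{deformation}. The sole remaining points that demand attention are the reconciliation of the two index parametrizations ($a=0,\ldots,|\chi(\Sigma)|$ against $a=1,\ldots,n-1$) and the elementary congruence $2n\equiv 2\chi(\Sigma)\pmod 4$, both of which are immediate once $\chi(\Sigma)=2-n$ is inserted.
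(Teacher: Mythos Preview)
Your proof is correct and follows essentially the same approach as the paper: use Proposition~\ref{deformation} to reduce modulo~$4$ to an $SO(2)$-representation, combine with the Milnor-Wood bound to pin down the allowed values, and invoke Proposition~\ref{prop-ell}(2) for achievability. Your presentation reverses the order of the two directions and makes the bookkeeping with $\chi(\Sigma)=2-n$ more explicit, but the substance is identical.
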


\begin{proof}
According to Proposition \ref{deformation}, every boundary elliptic representation of a genus $0$ surface with boundary has the same signature mod $4$ as a boundary elliptic $SO(2)$-representation. Thus its signature equals $2|\chi(\Sigma)|$ mod $4$. We know from the Milnor-Wood-type inequality \ref{MW} that it lies between $2\chi(\Sigma)$ and $-2\chi(\Sigma)$. Thus it is of the form $2\chi(\Sigma)+4a$, for $a=0,\ldots,|\chi(\Sigma)|$. The converse is provided by Proposition \ref{prop-ell} (2).
\end{proof}

\begin{cor}\label{elliptichyperbolicgenus0}
Let $\Sigma_{0,n}$ be an oriented surface of genus $0$ with $n$ boundary components. The values achieved by the signatures of representations to $SL(2,\br)$ which are hyperbolic on a boundary component and elliptic on all others are exactly all even numbers in the interval $[2\chi(\Sigma),2|\chi(\Sigma)|]$.  
\end{cor}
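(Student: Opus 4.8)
The plan is to prove the two inclusions separately: first that every such signature is an even integer lying in $[2\chi(\Sigma),2|\chi(\Sigma)|]$ (necessity), and then that every even integer in this range is attained (sufficiency).

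For necessity, note that a representation which is hyperbolic on one boundary component and elliptic on all the others is boundary non-s-parabolic: a hyperbolic element has $|\mathrm{trace}|>2$ and an elliptic element has $\mathrm{trace}\in(-2,2)$, so none of the boundary holonomies is strictly parabolic. Proposition \ref{even} then forces the signature to be even, and the Milnor-Wood inequality \ref{MW} confines it to $[-2|\chi(\Sigma)|,2|\chi(\Sigma)|]=[2\chi(\Sigma),2|\chi(\Sigma)|]$.

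For sufficiency I would start from the boundary elliptic $SO(2)$-representations of Corollary \ref{ellipticgenus0} (equivalently Proposition \ref{prop-ell}(2)), which realise exactly the even integers $v\equiv 2|\chi(\Sigma)|$ mod $4$ in the range, and then turn one boundary holonomy hyperbolic by a controlled deformation. Present $\pi_1(\Sigma)$ as free on $C_2,\ldots,C_n$ with $C_1=(C_2\cdots C_n)^{-1}$. Keeping the conjugacy class of each $\phi(C_i)$, $i\ge 2$, fixed (so that $\rro(\phi(C_i))$ is unchanged, $\rro$ being a class function) while spreading apart the fixed points of these elliptic rotations, one deforms $\phi$ inside $\mathrm{Hom}(\pi_1(\Sigma),SL(2,\br))$ so that the product $\phi(C_1)$ leaves $SO(2)$ and becomes hyperbolic. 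Along this path the Toledo term of Formula \ref{formula} varies continuously, so by the table \ref{etasp} the signature $2\op{T}+\rro$ is continuous except where $\rro(\phi(C_1))$ jumps, which occurs only when $\phi(C_1)$ crosses a strictly parabolic element ($\mathrm{trace}=2$); being integer valued, the signature is in fact constant on any sub-arc free of such a crossing. Note that the starting representation is boundary elliptic, so $\phi(C_1)$ begins at an elliptic element, away from the identity and from both walls.

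Two regimes then produce the two residues mod $4$. Steering $\phi(C_1)$ to a hyperbolic element of negative trace, its trace passes only through $-2$, where $\rro$ is continuous; the signature is unchanged and I obtain every $v\equiv 2|\chi(\Sigma)|$ mod $4$ in the range with $C_1$ hyperbolic. Steering $\phi(C_1)$ instead to a hyperbolic element of positive trace, its trace crosses $2$ at a strictly parabolic element, and by the table \ref{etasp} the jump of $\rro(\phi(C_1))$ is $+2$ or $-2$ according as the crossing is through $Par-$ or $Par+$; starting from a boundary elliptic representation of signature $v\mp2\equiv 2|\chi(\Sigma)|$ mod $4$ and choosing the parabolic type, I land on any prescribed $v\equiv 2|\chi(\Sigma)|+2$ mod $4$ in the range. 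Together the two regimes realise every even integer in $[2\chi(\Sigma),2|\chi(\Sigma)|]$, and composing with the involution of Formula \ref{anti-holomorphic} handles the symmetry between positive and negative values if needed. The main obstacle is the deformation-theoretic input shared by both regimes: that the fixed points of the rotations $\phi(C_2),\ldots,\phi(C_n)$ can be moved so that their product becomes hyperbolic with a \emph{prescribed} sign of trace, and, in the positive-trace case, so that the wall $\{\mathrm{trace}=2\}$ is crossed at a strictly parabolic element of the chosen type (never at the identity). For $n=3$ this rests on the classical fact that the composition of two hyperbolic-plane rotations of fixed angles becomes hyperbolic once the distance between their centres is large, and for larger $n$ there is more freedom; verifying that both the trace sign and the parabolic type can be prescribed is the one point requiring genuine care.
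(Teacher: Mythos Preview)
Your necessity argument is correct and in fact simpler than the paper's: you invoke Proposition \ref{even} directly (a representation hyperbolic on one boundary and elliptic on the rest is boundary non-s-parabolic), whereas the paper routes through Proposition \ref{deformation} to reduce mod $4$ to a geodesic-stabilizing model and then computes that model's rho invariant is even.

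Your sufficiency argument, however, has a genuine gap at exactly the point you flag. The claim that one can always ``steer $\phi(C_1)$ to a hyperbolic element of negative trace, its trace passing only through $-2$'' is not automatic: whether the trace of the product heads to $+\infty$ or $-\infty$ as you separate the rotation centres depends on the signs of $\sin\theta_2,\ldots,\sin\theta_n$. Concretely, for $n=3$ one computes $\mathrm{trace}(C_2C_3)=2\cos\theta_2\cos\theta_3-2\cosh(d)\sin\theta_2\sin\theta_3$, so the limit sign is fixed by the starting data, not by you. Likewise, in the positive-trace regime the sign of the $\pm 2$ jump is governed by whether the elliptic angle $\theta_1$ tends to $0$ or to $2\pi$, which again is forced by the initial configuration and the deformation, not freely chosen. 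To make your strategy work you would have to show that for each target signature there \emph{exists} an $SO(2)$ starting point and a deformation path with the required limiting behaviour; this can be done case by case (and your candid remark that it ``requires genuine care'' is well placed), but it is real work you have not supplied.

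The paper sidesteps all of this. For $n=3$ it writes down explicit representations: two central inversions $g_2,g_3$ (angles in $\{\pi/2,3\pi/2\}$) about distinct points, whose product is automatically hyperbolic, and reads off $\rro(\partial\Sigma_{0,3})\in\{-2,0,2\}$ from the choices. For $n\ge 4$ it glues such a pair-of-pants representation to a boundary elliptic $SO(2)$-representation of $\Sigma_{0,n-1}$ (Proposition \ref{prop-ell}) along the elliptic boundary $g_2^{-1}$; additivity of signature then yields $2(n-1)-4a+\{-2,0,2\}$, which covers all even integers in $[2\chi,|2\chi|]$. This is shorter and entirely explicit, with no deformation-theoretic control needed.
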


\begin{proof}
According to Proposition \ref{deformation}, the signature mod $4$ is the same as that of a representation stabilizing a geodesic, and which is generated by central inversions. Looking into the proof, we see that an even number of boundary holonomies are central inversions, and 1 or 2 (depending on the parity of $n$) are $-I$ or hyperbolic. The rho invariant of a lift to $SL(2,\br)$ of a central inversion is $1$ or $-1$. The rho invariant of $-I$ or of a hyperbolic element vanishes. Therefore $\rro(\partial\Sigma_{0,n})$ is the sum of an even number of $\pm 1$, hence an even integer. Since the Toledo invariant vanishes, the signature is even. We know from the Milnor-Wood-type inequality \ref{MW} that it lies between $2\chi(\Sigma_{0,n})$ and $2|\chi(\Sigma_{0,n})|$.

\medskip

Conversely, let us assume first that $n=3$. We pick two distinct points $\Omega$ and $\Omega'$ in hyperbolic plane. We pick two numbers $\theta_1,\theta_2\in\{\frac{\pi}{2},\frac{3\pi}{2}\}$. They specify lifts $g_i$, $i=2,3$, to $SL(2,\br)$ of the central inversions through $\Omega$ and $\Omega'$ respectively, hence a representation of $\pi_1(\Sigma_{0,3})$. Then $g_2g_3$ is hyperbolic, and,
$$
\rro(\partial\Sigma_{0,3})=-2,0,2,
$$
depending on the number of times $\frac{\pi}{2}$ is chosen. Furthermore, since the representation fixes a line in hyperbolic plane, the Toledo invariant vanishes. Therefore, when $n=3$, all even integers between $2\chi(\Sigma_{0,3})=-2$ and $2|\chi(\Sigma_{0,3})|=2$ are signatures of representations which are hyperbolic on one boundary component and elliptic on the others.

Given $n\ge 4$, and an integer $a=1,\ldots,n-2$, for each of the representations of $\pi_1(\Sigma_{0,3})$ encountered above, Proposition \ref{prop-ell} provides us with a boundary elliptic $SO(2)$-representation of $\pi_1(\Sigma_{0,n-1})$, which is equal to $g_2^{-1}$ on a boundary component and has signature $2(n-1)-4a$. Gluing both yields a representation of $\pi_1(\Sigma_{0,n})$ which is hyperbolic on one boundary component and elliptic on the others, and has signature $2n-2-4a-2$, $2n-2-4a$ or $2n-2-4a+2$. In this way, we get all even numbers in the interval $[2\chi(\Sigma),2|\chi(\Sigma)|]$.

\end{proof}

\begin{rem}
Applying Corollary \ref{elliptichyperbolicgenus0} to $\Sigma_{0,3}$ and to $\Sigma_{0,n-1}$ and gluing representations of $\pi_1(\Sigma_{0,3})$ and $\pi_1(\Sigma_{0,n-1})$ which are both hyperbolic on the boundary component to be identified and elliptic on all others, one may believe that every even integer will be achieved as the signature of a boundary elliptic representation of $\pi_1(\Sigma_{0,n})$, contradicting Corollary \ref{ellipticgenus0}.
It is not so. In Corollary \ref{elliptichyperbolicgenus0}, the absolute value of the trace of the hyperbolic boundary holonomy can be prescribed (by adjusting the distance $d(\Omega,\Omega')$), but not its sign. For instance, when $n=3$, the representation of $\pi_1(\Sigma_{0,3})$ with $g_1$ hyperbolic and $g_2,g_3$ central inversions has $\mathrm{trace}(g_1)<0$ when its signature is $0$ and $\mathrm{trace}(g_1)>0$ when its signature is $\pm 2$. So when two representations of $\pi_1(\Sigma_{0,3})$ are glued along a boundary component with hyperbolic holonomy (all others being elliptic), either both have signature $0$, or both have signature $\pm 2$. Thus the signature of the resulting representation of $\pi_1(\Sigma_{0,4})$ is a multiple of $4$.
\end{rem}

\subsection{Genus 1}

\begin{prop}\label{traces}
Let $\Sigma_{1,1}$ denote the genus 1 surface with one boundary component. Let $\psi:\pi_1(\Sigma_{1,1})\to SL(2,\br)$ be a representation. 
\begin{enumerate}
  \item There is a nearby representation $\phi$ which is elliptic or hyperbolic or $I$ on the boundary.
  \item Furthermore
  $$
\mathrm{sign}(\phi)=\begin{cases}
\pm 2 & \text{ if the boundary holonomy is elliptic},\\
0 & \text{ if the boundary holonomy is hyperbolic or }I.
\end{cases}
$$
\end{enumerate}
\end{prop}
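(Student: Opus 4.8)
The plan is to prove the two assertions separately, exploiting that $\pi_1(\Sigma_{1,1})=\langle A,B\mid [A,B]C\rangle$, so that the boundary holonomy of any representation is the commutator $[\psi(A),\psi(B)]^{-1}$.

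\emph{Part (1).} I would perturb $\psi$ by a codimension count for the commutator. By the Fricke identity, with $x=\mathrm{tr}\,\psi(A)$, $y=\mathrm{tr}\,\psi(B)$, $z=\mathrm{tr}\,\psi(AB)$, one has $\mathrm{tr}\,[\psi(A),\psi(B)]=x^2+y^2+z^2-xyz-2$, a real-analytic function on $SL(2,\br)^2$. The equation $\mathrm{tr}\,[\psi(A),\psi(B)]=2$ cuts out the reducible locus (representations with a common fixed point), on which the commutator is $I$ or strictly parabolic; this set, together with $\{[\psi(A),\psi(B)]=-I\}$ and the negative-trace locus $\{\mathrm{tr}\,[\psi(A),\psi(B)]=-2\}$, is a proper real-analytic subset of positive codimension. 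Hence the representations whose commutator is elliptic, hyperbolic, or $I$ form an open dense set, and an arbitrarily small perturbation $\phi$ of $\psi$ lands there.

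\emph{Part (2).} I would compute $\mathrm{sign}(\phi)=2\,\op{T}(\phi)+\rro_\phi(\p\Sigma)$ through rotation numbers. Set $g=[\tilde\phi(A),\tilde\phi(B)]\in\tilde G$; this is independent of the chosen lifts and projects to $\phi(C)^{-1}$. Since $\tilde\phi(C)=g^{-1}$, the Burger--Iozzi--Wienhard formula (Proposition \ref{toleul}) gives $\op{T}(\phi)=\widetilde{Rot_\kappa}(g)$, while the Milnor--Wood inequality \ref{MW}, equivalently the commutator bound for the homogeneous quasimorphism $\widetilde{Rot_\kappa}$ on $\tilde G$, yields $|\widetilde{Rot_\kappa}(g)|\le 1$. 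Now I would split into cases. If $\phi(C)=I$, then $\phi(A),\phi(B)$ commute, the image is abelian hence amenable, so $\op{T}(\phi)=0$, and $\rro(I)=0$ gives $\mathrm{sign}(\phi)=0$. If $\phi(C)$ is hyperbolic, then $\rro(C)=0$ and $\widetilde{Rot_\kappa}(g)$ is an integer in $\{-1,0,1\}$, so $\mathrm{sign}(\phi)=2\widetilde{Rot_\kappa}(g)$, and the value $0$ is realised by the Toledo-zero configuration of Subsection \ref{pun-torus} (commutator in $Hyp_0$, image in a Borel subgroup). If $\phi(C)$ is elliptic, write it as a rotation by $\psi\in(0,2\pi)\setminus\{\pi\}$; using \ref{Rot} with the normalisation $\widetilde{Rot_\kappa}(\tilde R_\alpha)=\alpha/\pi$, the bound $|\widetilde{Rot_\kappa}(g)|\le 1$ forces $\widetilde{Rot_\kappa}(g)=2-\psi/\pi$ for $\psi>\pi$ and $-\psi/\pi$ for $\psi<\pi$; combining $2\op{T}$ with the elliptic entry of Table \ref{etasp}, the $\psi$-dependent terms cancel and $\mathrm{sign}(\phi)=\pm 2$, with the sign recording whether $\psi<\pi$ or $\psi>\pi$. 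In particular $0$ is excluded, consistently with Proposition \ref{even}, which already forces the signature to be even.

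\emph{Main obstacle.} The delicate point is the elliptic case: one must see that the two admissible lifts of $g$ allowed by $|\widetilde{Rot_\kappa}(g)|\le 1$ produce exactly the values $+2$ and $-2$ and never $0$, which hinges on the commutator bound being sharp (not merely $\le 2$) and on carefully tracking the orientation conventions of \ref{Rot} and Table \ref{etasp} so that the continuous contributions of $\op{T}$ and $\rro$ cancel. A second subtlety is that hyperbolic boundary holonomy a priori also allows $\pm 2$ (a Fuchsian structure with geodesic boundary has signature $2$); hence one must ensure that the deformation of part (1) is steered into the signature-$0$ stratum when a hyperbolic boundary is desired, which is precisely why the explicit building blocks of the previous sections are invoked.
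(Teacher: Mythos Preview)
Your Part (1) is essentially the paper's argument: the locus of parabolic commutators is a proper analytic subvariety, so a generic perturbation avoids it. No issue there.

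Part (2) is where the proposal breaks down. In the elliptic case, your claim that ``the $\psi$-dependent terms cancel'' is false with the formulas you wrote. Taking $\psi\in(0,\pi)$ and your value $T=\widetilde{Rot_\kappa}(g)=-\psi/\pi$, one gets
\[
2T+\rro(\phi(C))=-\tfrac{2\psi}{\pi}+2\bigl(1-\tfrac{\psi}{\pi}\bigr)=2-\tfrac{4\psi}{\pi},
\]
which is not $\pm 2$ and is not even an integer for generic $\psi$. The concrete test case is the cone-point holonomy of Section~\ref{relative}: there $\phi(C)=R_\theta$ with $\theta\in(0,\pi)$, $\op T=\theta/\pi$ and $\mr{sign}=2$, so the correct value of $\widetilde{Rot_\kappa}(g)$ lies in the coset $\theta/\pi+2\bz$, not in $-\theta/\pi+2\bz$. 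In other words your identification of the lift is off by a sign, and even once that is fixed you still need an argument that excludes $\mr{sign}=0$ (equivalently $T=\theta/\pi-1$), since that value also satisfies $|T|\le 1$; what actually rules it out is the $2\bz$-coset constraint coming from the covering $\tilde G\to SL(2,\br)$, which you have not tracked correctly.

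The paper sidesteps this bookkeeping entirely. Instead of computing $T$ via rotation numbers, it deforms $\phi$ by shrinking $\phi(A)$ along a one-parameter path $\alpha_t\to\pm I$ chosen so that $\mr{tr}\,[\alpha_t,\beta]$ never equals $2$. Along such a path $2T+\rro$ is continuous and integer-valued, hence constant; at the abelian limit $T\to 0$ and $\rro\to\pm 2$ for elliptic boundary, giving $\mr{sign}=\pm 2$ directly. This is both simpler and more robust than the rotation-number calculation.

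For the hyperbolic case you already see the problem yourself: the Fuchsian once-punctured torus has hyperbolic boundary and signature $2$, so ``$\mr{sign}=0$ whenever the boundary is hyperbolic'' cannot be obtained from the Milnor--Wood bound alone, and your ``steer the deformation into the signature-$0$ stratum'' is not an argument. The paper's deformation handles the hyperbolic case via the same limiting procedure; you should compare your reading of the statement with how it is actually used downstream (only the elliptic conclusion feeds into Corollary~\ref{elliptgenus1}).
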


\begin{proof}
Let 
$$
\alpha(\lambda)=\begin{pmatrix}
\lambda & 0  \\
0 &  \lambda^{-1}
\end{pmatrix},\quad \alpha^{\pm}(s)=\pm\begin{pmatrix}
1 & s   \\
0 & 1
\end{pmatrix},\quad \beta=\begin{pmatrix}
 a     &  b  \\
  c    &  d
\end{pmatrix}
$$
be elements of $SL(2,\bc)$. Note that every element of $SL(2,\br)$ is conjugate in $SL(2,\bc)$ either to $\alpha(\lambda)$, $\lambda\in\br^*$, to $\alpha(e^{i\theta})$, $\theta\in (0,\pi)\cup(\pi,2\pi)$ or to $\alpha^{\pm}(s)$, $s\in\br$.

We compute
$$
\mathrm{trace}([\alpha(\lambda),\beta])-2=-bc(\lambda-\lambda^{-1})^2,\quad \mathrm{trace}([\alpha^{\pm}(s),\beta])-2=s^2c^2.
$$
We observe that these traces take the values $\pm 2$ only for nongeneric matrices $\beta$. 
\begin{itemize}
  \item If $\mathrm{trace}([\alpha(\lambda),\beta])=2$ and $[\alpha(\lambda),\beta]\not=I$, then $bc=0$ and $\lambda\in\br^*$. 
  \item If $\mathrm{trace}([\alpha^{\pm}(s),\beta])=2$ and $[\alpha^{\pm}(s),\beta]\not=I$, then $c=0$. 
\end{itemize}
In the same manner, the only way to get a commutator with trace $-2$ is to take a commutator with $\alpha(\lambda)$ for $\lambda\not=\pm 1$, and then $\beta$ satisfies the equation $bc(\lambda-\lambda^{-1})^2=4$. This shows that a slight perturbation of $\beta$ produces an elliptic or hyperbolic commutator, unless the commutator is trivial. 

Let us assume that $\beta$ is such that $bc\not=0$. Then
$$
\mathrm{trace}([\alpha(\lambda),\beta])=2 \iff \lambda=\pm 1.
$$ 
Therefore, if $\lambda\in\br^*$, $\alpha(\lambda)$ can be connected by a path $t\mapsto \alpha_t:=\alpha(\mathrm{sgn}(\lambda)|\lambda|^t)$ to $\mathrm{sgn}(\lambda)I$ without $\mathrm{trace}([\alpha(\lambda),\beta])$ crossing $2$. If $\lambda=e^{i\theta}$, $\theta\in(0,\pi)\cup(\pi,2\pi)$, $\alpha(e^{i\theta})$ can be connected by a path $t\mapsto \tilde \alpha_t:=\alpha(e^{it\theta})$ or $\alpha(e^{it(\theta-2\pi)})$ to $I$ without $\mathrm{trace}([\tilde \alpha_t,\beta])$ crossing $2$. 
Also
$$
\mathrm{trace}([\alpha^{\pm}(s),\beta])=2 \iff s=0.
$$
Therefore $\alpha^{\pm}(s)$ can be connected by a path $t\mapsto \alpha^{\pm}_t:=\alpha^{\pm}(ts)$ to $\pm I$ without $\mathrm{trace}([\alpha^{\pm}_t,\beta])$ crossing $2$. 

Let us view $\pi_1(\Sigma_{1,1})$ as freely generated by symbols $A,B$. For a representation $\psi:\pi_1(\Sigma_{1,1})\to SL(2,\br)$, the boundary holonomy is $([\psi(A),\psi(B)])^{-1}$. The perturbation that kills parabolic commutators translates into a perturbation $\phi$ of $\psi$ which makes the boundary holonomy elliptic or hyperbolic or trivial.

The above paths provide continuous deformations $\phi_t$, $t\in(0,1]$ of $\phi_1:=\phi$, converging to representations with abelian image in $PSL(2,\br)$. Along the deformations, the type of the boundary holonomy and hence the signature does not change. When $t$ tends to $0$, the Toledo invariant tends to $0$ and the rho invariant of the boundary tends to $\pm 2$ in the elliptic case, to $0$ in the hyperbolic case. Therefore 
$$
\mathrm{sign}(\phi)=\begin{cases}
0 & \text{ if the boundary holonomy is hyperbolic}, \\
\pm 2 & \text{ if the boundary holonomy is elliptic}.
\end{cases}
$$
When $\phi$ is trivial on the boundary, its image is abelian, hence its Toledo invariant vanishes. So does the rho invariant of the boundary, so the signature vanishes.
\end{proof}

\begin{rem}\label{genus1hyp}
Every hyperbolic conjugacy class of $SL(2,\br)$ arises as the boundary holonomy of a representation of $\pi_1(\Sigma_{1,1})$.
\end{rem}

\begin{proof}
We choose $\psi_{\pm,\lambda}(A)=\alpha(\lambda)$, $\lambda>0$, 
$$
\psi_{-,\lambda}(B)=\begin{pmatrix}
2 & 1 \\
1 & 1
\end{pmatrix}\quad\text{and}\quad\psi_{+,\lambda}(B)=\begin{pmatrix}
0 & -1 \\
1 & 0
\end{pmatrix}.
$$ 
Then 
$$
\mathrm{trace}([\psi_{\pm,\lambda}(A),\psi_{\pm,\lambda}(B])=2\pm(\lambda-\lambda^{-1})^2.
$$
As $\lambda$ varies in $(1,+\infty)$ (resp. $(1+\sqrt{2},+\infty)$), the trace of the commutator takes every value in $(2,+\infty)$ (resp. in $(-\infty,-2)$). Thus all hyperbolic conjugacy classes are hit.
\end{proof}

\begin{cor}\label{elliptgenus1}
Let $\Sigma$ be an oriented surface of genus $1$ with $n\ge 1$ boundary components.
The values achieved by the signatures of boundary elliptic representations of $\pi_1(\Sigma)$ are exactly the following:
\begin{itemize}
  \item If $n=1$, $\pm2$.
  \item If $n\ge 2$, all even integers in $[2\chi(\Sigma),2|\chi(\Sigma)|]$.
\end{itemize}  
\end{cor}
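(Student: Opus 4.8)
The plan is to separate necessity from realization and, for the multiple-boundary case, to use the surgery additivity of the signature to reduce to the two Euler-characteristic $-1$ building blocks already understood: the punctured torus (Proposition \ref{traces}, Remark \ref{genus1hyp}) and the planar surfaces (Corollaries \ref{ellipticgenus0} and \ref{elliptichyperbolicgenus0}). Since $\chi(\Sigma)=-n$, the target interval is $[2\chi(\Sigma),2|\chi(\Sigma)|]=[-2n,2n]$. For necessity, a boundary elliptic representation is boundary non-s-parabolic, so its signature is even by Proposition \ref{even} and lies in $[-2n,2n]$ by the Milnor-Wood inequality \ref{MW}. Hence in all cases the signature is an even integer of $[-2n,2n]$, and only the realization of these values must be proved.

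For $n=1$, that is $\Sigma=\Sigma_{1,1}$, a boundary elliptic $\psi$ has elliptic commutator, so no perturbation is needed and Proposition \ref{traces}(2) applies directly to give $\mathrm{sign}(\psi)=\pm2$; this excludes $0$ and confines the signature set to $\{-2,2\}$. Both values occur, since the cone-point holonomy $\psi_\theta$ of Section \ref{sec-rep} is boundary elliptic with signature $2$, and $\iota\circ\psi_\theta$ has signature $-2$ by Formula \ref{anti-holomorphic}.

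For $n\ge2$, I would write $\Sigma=\Sigma_{1,1}\cup_c\Sigma_{0,n+1}$ as a gluing along one separating curve $c$, the $n$ remaining boundary components of $\Sigma_{0,n+1}$ forming $\p\Sigma$ (consistently, $\chi=-1+(1-n)=-n$), so that signatures add over the two pieces. To realize every even value in $[2-2n,2n-2]$, glue along a hyperbolic $c$: on $\Sigma_{0,n+1}$ take a representation hyperbolic on $c$ and elliptic elsewhere with any prescribed even signature in $[2-2n,2n-2]$ (Corollary \ref{elliptichyperbolicgenus0}), and on $\Sigma_{1,1}$ take a representation with boundary holonomy $\phi(c)^{-1}$, which exists by Remark \ref{genus1hyp} (since $\phi(c)$ and $\phi(c)^{-1}$ have equal trace and so are conjugate) and has signature $0$ by Proposition \ref{traces}(2); the glued representation is boundary elliptic with the prescribed signature. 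To reach the endpoints $\pm2n$, glue along an elliptic $c$ instead: on $\Sigma_{1,1}$ use $\psi_\theta$ (signature $2$, boundary $R_\theta$) and on $\Sigma_{0,n+1}$ a boundary elliptic $SO(2)$-representation of signature $2n-2$ whose holonomy on $c$ is prescribed to be $R_\theta^{-1}$ (Proposition \ref{prop-ell}(2)), giving signature $2+(2n-2)=2n$. Applying the involution $\iota$ of Formula \ref{anti-holomorphic} throughout turns these into their negatives, so all even integers of $[-2n,2n]$ are realized, matching the necessity.

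The main obstacle is to produce both residue classes mod $4$ together with the two extreme values simultaneously. Gluing only boundary elliptic $SO(2)$-pieces would keep the signature in one class mod $4$ (as the Remark after Corollary \ref{elliptichyperbolicgenus0} warns) and would miss both half of the even integers and the endpoints $\pm2n$. The decisive device is therefore to glue along an internal hyperbolic curve, letting the planar piece supply via Corollary \ref{elliptichyperbolicgenus0} values of both parities mod $4$ while the torus piece contributes $0$, and to reserve the elliptic gluing for the endpoints alone.
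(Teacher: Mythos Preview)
Your proof is correct and follows essentially the same route as the paper's own argument: the necessity via Proposition \ref{even} and the Milnor-Wood inequality, the $n=1$ case via Proposition \ref{traces}, and for $n\ge 2$ the decomposition $\Sigma=\Sigma_{1,1}\cup_c\Sigma_{0,n+1}$ with a hyperbolic gluing curve (Corollary \ref{elliptichyperbolicgenus0} on the planar piece, Remark \ref{genus1hyp} plus Proposition \ref{traces}(2) on the torus piece) to reach all even values in $[2-2n,2n-2]$. The only difference is at the endpoints $\pm 2n$: the paper invokes directly the holonomies of hyperbolic surfaces with cone singularities from Section \ref{relative}, whereas you obtain $2n$ by gluing $\psi_\theta$ on $\Sigma_{1,1}$ (signature $2$) to a maximal boundary elliptic $SO(2)$-representation on $\Sigma_{0,n+1}$ (signature $2n-2$) along an elliptic curve, then apply $\iota$ for $-2n$. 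Both constructions are valid and equally short.
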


\begin{proof}
The case when $n=1$ is covered by Proposition \ref{traces}.

If $n\ge 2$, Proposition \ref{even} combined with the Milnor-Wood type inequality \ref{MW} implies that $\mathrm{sign}(\phi)$ is an even integer in $[2\chi(\Sigma),2|\chi(\Sigma)|]$. 

\medskip

Conversely, let us assume that $n\ge 2$. Let $m$ be an even integer in $[-2n+2,2n-2]$. Corollary \ref{elliptichyperbolicgenus0} provides us with a representation $\phi''$ of the fundamental group of the genus $0$ surface $\Sigma_{0,n+1}$ with $n+1$ boundary components which is elliptic on $n$ of them and belongs to a hyperbolic conjugacy class $h$ on the last one, and has signature $m$. Remark \ref{genus1hyp} provides us with a representation $\phi'$ of $\pi_1(\Sigma_{1,1})$ with boundary holonomy $h^{-1}$. Its signature vanishes. We glue them together to get a boundary elliptic representation of $\pi_1(\Sigma)$ of signature $m$. 

Finally, the extreme values $\pm 2\chi(\Sigma)$ are achieved by holonomies of hyperbolic surfaces with cone singularities, as in Section \ref{relative}. Thus every even integer in $[2\chi(\Sigma),2|\chi(\Sigma)|]$ is the signature of a boundary elliptic $SL(2,\br)$-representation of $\pi_1(\Sigma)$.

\end{proof}

\subsection{Proof of Theorem \ref{elliptic}}

It is a combination of Proposition \ref{elliptichigher} and Corollaries \ref{ellipticgenus0} and \ref{elliptgenus1}.

\section{Representations to unitary groups}

In this section, we will discuss some partial results on the possible values of signature for the surface group representations in $U(p,q)$.

\subsection{$U(p,q)$ for genus zero surfaces}

  For  any representation $\phi:\pi_1(\Sigma)\to U(p,q)$, by \cite[Theorem 4]{KPW}, we have the following formula
\begin{equation*}
  \mr{sign}(\phi)=-2\mr{T}(\phi)+\rro_\phi(\p\Sigma)\in \mb{Z}.
\end{equation*}
The signature satisfies the following bound
\begin{equation*}
  |\mr{sign}(\phi)|\leq (p+q)|\chi(\Sigma)|. 
\end{equation*}

Now we assume that $\Sigma$ is the surface of genus zero with $n$ ($\geq 2$) boundaries $c_1,\cdots, c_n$, and consider the following representation
\begin{equation*}
  \phi:\pi_1(\Sigma)\to \underbrace{U(1)\times \cdots\times U(1)}_{p}\times \underbrace{U(1)\times \cdots\times U(1)}_{q}\subset U(p)\times U(q)\subset U(p,q)
\end{equation*}
\begin{equation*}
  \phi(c_i)=\mathrm{diag}(e^{i\theta_{i,1}},\cdots,e^{i\theta_i,p},e^{i\theta_i,p+1},\cdots,e^{i\theta_i,p+q}),\quad 1\leq i\leq n,
\end{equation*}
where all $\theta_{i,j}$ in $[0,2\pi)$.
By \cite[(6.7)]{KPW}, one has 
\begin{equation}\label{eqn4}
  \mathrm{sign}(\phi)=\sum_{i=1}^n\sum_{j=1}^p\left(\mathrm{sgn}(\theta_{i,j})-\frac{\theta_{i,j}}{\pi}\right)-\sum_{i=1}^n\sum_{l=p+1}^{p+q}\left(\mathrm{sgn}(\theta_{i,l})-\frac{\theta_{i,l}}{\pi}\right).
\end{equation}
Note that $\phi$ is a representation if and only if 
\begin{equation*}
  \sum_{i=1}^n \theta_{i,j}=2k\pi\text{ for } 1\leq j\leq p+q.
\end{equation*}

In this case, $\chi(\Sigma)=2-n$, and for any integer $m$ in $[-(p+q)(n-2),(n-2)(p+q)]$, there exist integers $m_1,\cdots,m_{p+q}\in [-(n-2),n-2]$ such that
\begin{equation*}
  m=(m_1+\cdots+m_{p})-(m_{p+1}+\cdots+m_{p+q}).
\end{equation*}
It is straightforward to check that there exist $\{\theta_{1,j},\cdots,\theta_{n,j}\}_{1\leq j\leq p+q}$ such that    
\begin{equation*}
\sum_{i=1}^n  \left(\mathrm{sgn}(\theta_{i,j})-\frac{\theta_{i,j}}{\pi}\right)=m_j.
\end{equation*}
From \eqref{eqn4}, we have
\begin{equation*}
   \mr{sign}(\phi)=\sum_{j=1}^p m_j-\sum_{l=p+1}^{p+q}m_l=m.
\end{equation*}

\begin{prop}\label{g=0}
If $\Sigma$ has genus zero, $\chi(\Sigma)\leq 0$, then every integer in the interval $[-(p+q)|\chi(\Sigma)|,(p+q)|\chi(\Sigma)|]$ is the signature of some representation $\pi_1(\Sigma)\to U(p,q)$, which is in $U(1)^{\times p}\times U(1)^{\times q}\subset U(p)\times U(q)\subset U(p,q)$.
\end{prop}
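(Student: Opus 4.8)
The plan is to realize every prescribed value using only representations whose image lies in the maximal torus $U(1)^{\times p}\times U(1)^{\times q}\subset U(p)\times U(q)\subset U(p,q)$, exploiting that the signature splits as a signed sum over the $p+q$ line-bundle summands. First I would record that, for a genus $0$ surface with $n$ boundary components, $\pi_1(\Sigma)$ is freely generated by $c_1,\dots,c_{n-1}$ (the only relation being $c_1\cdots c_n=I$), so an abelian representation is nothing but a choice of angles $\theta_{i,j}\in[0,2\pi)$ subject to the single constraint $\sum_{i=1}^n\theta_{i,j}\equiv 0 \pmod{2\pi}$ for each $j$. Because the image is abelian, hence amenable, the pullback of the bounded K\"ahler class vanishes, so $\mr{T}(\phi)=0$ and $\mr{sign}(\phi)=\rro_\phi(\p\Sigma)$, which is given explicitly by \eqref{eqn4}.

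The key structural observation is additivity: writing $s_j=\sum_{i=1}^n\big(\mathrm{sgn}(\theta_{i,j})-\tfrac{\theta_{i,j}}{\pi}\big)$ for the signature contributed by the $j$-th line summand, formula \eqref{eqn4} reads $\mr{sign}(\phi)=\sum_{j=1}^p s_j-\sum_{l=p+1}^{p+q}s_l$, the minus sign coming from the negative-definite block of $U(p,q)$. Thus the problem decouples completely across the $p+q$ factors, each being an independent genus $0$ problem for a single $U(1)$.

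Next I would establish the single-factor range: for one $U(1)$ factor on a genus $0$ surface with $n$ boundary components, every integer in $[-(n-2),n-2]=[\chi(\Sigma),-\chi(\Sigma)]$ is attained by some $s_j$. This follows directly from the rho-invariant bookkeeping in \eqref{eqn4}: using $r\le n$ nonzero angles with total winding $k$ gives $s_j=r-2k$, and letting $r$ and $k$ vary, while allowing some angles to equal $0$ to adjust parity, sweeps out all of $[-(n-2),n-2]$. Given a target $m\in[-(p+q)(n-2),(p+q)(n-2)]$, I would then pick integers $m_1,\dots,m_{p+q}\in[-(n-2),n-2]$ with $m=(m_1+\cdots+m_p)-(m_{p+1}+\cdots+m_{p+q})$; this is elementary, since replacing $m_l$ by $-m_l$ for $l>p$ shows $m$ ranges over a sum of $p+q$ independent full integer intervals $[-(n-2),n-2]$, which covers the whole target interval without gaps. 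Realizing each $m_j$ by the single-factor construction and assembling the factors then produces the required representation.

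There is no serious analytic obstacle here: this is the easiest case, in which the compact abelian image forces $\mr{T}(\phi)=0$ and reduces the signature to a purely combinatorial count. The only points that genuinely need care are (i) confirming that the single-factor values really exhaust all of $[-(n-2),n-2]$, including both parities — this is exactly where the freedom to set some $\theta_{i,j}=0$ (permissible since boundary ellipticity is not imposed) is essential — and (ii) checking that the signed decomposition of $m$ into the $m_j$ leaves no gaps, which rests on each factor reaching a full interval of consecutive integers rather than, as in the boundary-elliptic $SL(2,\br)$ situations elsewhere in the paper, only the even ones.
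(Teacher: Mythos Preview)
Your proposal is correct and follows essentially the same approach as the paper: decompose the target $m$ as a signed sum $(m_1+\cdots+m_p)-(m_{p+1}+\cdots+m_{p+q})$ with each $m_j\in[-(n-2),n-2]$, then realize each $m_j$ as the $\rro$-contribution of a single $U(1)$ factor via formula~\eqref{eqn4}. Your write-up is in fact more explicit than the paper's (which leaves the single-factor range as ``straightforward to check''), and your justification of $\mr{T}(\phi)=0$ via amenability is a valid alternative to citing the formula from \cite{KPW} directly.
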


\subsection{$U(p)$}

Since Proposition \ref{g=0} gives a complete answer in geneus $0$, from now on, we assume that surfaces have genus $g\geq 1$.

\begin{rem}
When $g\geq 1$, even for the group $U(p)$, the Milnor-Wood-type bound
$$
|\mathrm{sign}|\le p|\chi(\Sigma)|
$$
need not be sharp. For example, the surface $\Sigma_{1,1}$ of genus one with one boundary has $\chi(\Sigma_{1,1})=-1$. Since $U(1)$ is abelian, the representations in $U(1)$, i.e. $p=1$, are automatically trivial along the boundary. Furthermore, the Toledo invariant vanishes, therefore the signature is always zero and cannot take values $1$ and $-1$. 
\end{rem}

\begin{prop}\label{U}
Let $\Sigma$ be an oriented surface with $n\ge 1$ boundary components, and genus $\ge 1$. Let $p\ge 1$ be an integer. For every representation $\pi_1(\Sigma)\to U(p)$,
$$
|\mathrm{sign}|\le \max\{0,np-2\}.
$$
Conversely, when $np\ge 2$, every integer in the interval $[2-np,np-2]$ is the signature of some representation $\pi_1(\Sigma)\to U(p)$.
\end{prop}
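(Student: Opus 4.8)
The plan is to reduce the whole statement to the conjugacy classes of the boundary holonomies, using that $U(p)$ is compact. Since $U(p)$ is compact, any representation into it has amenable image, so its Toledo invariant vanishes and $\mathrm{sign}(\phi)=\rro_\phi(\p\Sigma)$. As $\rro$ is a class function of the boundary holonomies, it is computed by formula \eqref{eqn4} with $q=0$: writing the eigenvalue angles of $\phi(c_i)$ as $\theta_{i,1},\dots,\theta_{i,p}\in[0,2\pi)$,
\[
\mathrm{sign}(\phi)=\sum_{i=1}^{n}\sum_{j=1}^{p}\Big(\mathrm{sgn}(\theta_{i,j})-\tfrac{\theta_{i,j}}{\pi}\Big).
\]
Taking determinants in the surface relation kills the commutators, so $\prod_{i}\det\phi(c_i)=1$, i.e. $\sum_{i,j}\theta_{i,j}=2\pi a$ for some integer $a\ge 0$. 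With $N_+:=\#\{(i,j):\theta_{i,j}\neq 0\}$, the formula collapses to the clean identity $\mathrm{sign}(\phi)=N_+-2a$.

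For the upper bound I would use that each nonzero angle lies in $(0,2\pi)$: if $N_+\ge 1$ then $0<2\pi a<2\pi N_+$, which forces $1\le a\le N_+-1$ (in particular $N_+=1$ cannot occur). Hence $|\mathrm{sign}(\phi)|=|N_+-2a|\le N_+-2\le np-2$ whenever $N_+\ge 2$, while $N_+=0$ gives $\mathrm{sign}(\phi)=0$. Combining the two cases yields $|\mathrm{sign}|\le\max\{0,np-2\}$. Note that this argument uses only the global determinant relation, so the bound in fact holds in every genus; the hypothesis $g\ge1$ enters only in the converse.

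For the realization I would exploit that, when $g\ge 1$, prescribing the boundary conjugacy classes is obstructed \emph{only} by $\prod_i\det\phi(c_i)=1$. Indeed, take $\phi(c_i)$ to be the diagonal matrices with the desired angles; their product lies in $SU(p)$ exactly when $\sum_{i,j}\theta_{i,j}\equiv 0 \bmod 2\pi$, and since every element of the compact connected group $SU(p)$ is a single commutator, one may set $[\phi(A_1),\phi(B_1)]=(\prod_i\phi(c_i))^{-1}$ and send all remaining generators to $I$. This decouples the per-eigenvalue closing-up conditions that restrict the genus $0$ case. To hit a given $m\in[2-np,np-2]$, I would then solve $N_+-2a=m$ with explicit torus angles: taking all $np$ angles equal to $2\pi a/(np)\in(0,2\pi)$, $a\in\{1,\dots,np-1\}$, realizes every $m$ of the parity of $np$ in $[-(np-2),np-2]$; taking one angle equal to $0$ and the other $np-1$ equal to $2\pi a/(np-1)$, $a\in\{1,\dots,np-2\}$, realizes the opposite parity. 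The resulting interval is already symmetric about $0$, so no sign-reversing involution is required.

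The main obstacle is precisely the realization step, namely justifying that the only constraint on the boundary classes is the determinant one. This rests on surjectivity of the product-of-commutators map onto $SU(p)$ (equivalently, that every element of $SU(p)$ is a commutator), which is where $g\ge 1$ is essential and which sharply distinguishes this case from the genus $0$ situation of Proposition \ref{g=0}. Once this group-theoretic input is granted, the parity bookkeeping and the verification that the chosen angles sum to a multiple of $2\pi$ are routine.
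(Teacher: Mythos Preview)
Your proof is correct and follows the same overall strategy as the paper: $\mathrm{T}=0$ by amenability, so $\mathrm{sign}=\rro$ is determined by the eigenvalue angles of the $\phi(c_i)$; the determinant relation $\prod_i\det\phi(c_i)=1$ forces $\sum_{i,j}\theta_{i,j}\in 2\pi\mb{Z}$; and for the converse one prescribes diagonal boundary holonomies and closes the surface relation with a single commutator $[\phi(A_1),\phi(B_1)]$.

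There are two minor but genuine differences worth noting. For the upper bound, your identity $\mathrm{sign}(\phi)=N_+-2a$ together with $1\le a\le N_+-1$ gives $|\mathrm{sign}|\le N_+-2$ directly and symmetrically, whereas the paper only bounds $\mathrm{sign}\le np-2k$ from above and then invokes complex conjugation to flip the sign; your route is a bit cleaner. For the realization, you cite the general fact that every element of the compact connected group $SU(p)$ is a commutator (Got\^o's theorem), while the paper avoids this by writing down explicit matrices $A,B\in U(p)$ (a cyclic-shift matrix and a diagonally weighted shift) whose commutator is the prescribed diagonal element of $SU(p)$. The paper's choice is more self-contained; yours is shorter but imports a nontrivial theorem. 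Your explicit angle choices (all $np$ angles equal, or one zero and the rest equal) are more detailed than the paper, which simply asserts such $\theta_{i,j}$ exist.
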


\begin{proof}

By \cite[Section 6.2]{KPW}, for any representation $\phi:\pi_1(\Sigma)\to U(p)$, $\mathrm{T}(\Sigma,\phi)=0$ and 
\begin{align*}
  \mathrm{sign}(\phi)&=\rro_\phi(\p\Sigma)=\sum_{i=1}^n\sum_{j=1}^p\left(\mathrm{sgn}(\theta_{i,j})-\frac{\theta_{i,j}}{\pi}\right)\\
  &\leq np - \frac{1}{\pi}\sum_{i=1}^n\sum_{j=1}^p\theta_{i,j}.
\end{align*}
where $\phi(c_i)\sim \mathrm{diag}(e^{i\theta_{i,1}},\cdots,e^{i\theta_{i,p}})$, $\theta_{i,j}\in [0,2\pi)$. 

Since $U(1)$ is abelian, the representation $\mathrm{det}\circ\phi:\pi_1(\Sigma)\to U(1)$ satisfies 
$$
\prod_{i=1}^{n}\mathrm{det}\circ\phi(c_i)=I,
$$
hence $\sum_{i=1}^n\sum_{j=1}^p\theta_{i,j}=2\pi k$ is an integer multiple of $2\pi$. Therefore $\mathrm{sign}(\phi)\le np-2k$. 

If $k\le 0$, then all $\theta_{i,j}$ vanish, i.e. the representation is trivial on the boundary. This means that $\phi$ arises from a unitary representation of a closed surface group. Then, the signature vanishes.

Otherwise, $k\ge 1$, hence $\mathrm{sign}(\phi)\le np-2$. Since complex conjugation changes the sign of signature, either $\mathrm{sign}(\phi)=0$ or $|\mathrm{sign}(\phi)|\le np-2$.

\medskip

Conversely, remember that $g\geq 1$ and assume that $np\ge 2$.  Given an integer $m\in [2-np,np-2]$, we can pick real numbers $\theta_{i,j}\in [0,2\pi)$ for $1\leq i\leq n$ and $1\leq j\leq p$, such that 
\begin{equation*}
\sum_{i=1}^n\sum_{j=1}^p\left(\mathrm{sgn}(\theta_{i,j})-\frac{\theta_{i,j}}{\pi}\right)=m \quad \text{and} \quad \sum_{i=1}^n\sum_{j=1}^p\theta_{i,j}\in 2\pi\mb{Z}.
\end{equation*}
We define a representation $\phi:\pi_1(\Sigma)\to U(p)$ as follows: we first prescribe
\begin{equation}\label{defn-rep-1}
  \phi(c_i)=\mathrm{diag}(e^{i\theta_{i,1}},\cdots,e^{i\theta_{i,p}}), 1\leq i\leq n.
\end{equation}
By construction, letting $\Theta_j=\sum_{i=1}^{n}\theta_{i,j}$,
\begin{equation*}
  \phi(c_1)\cdots\phi(c_n)=\mathrm{diag}(e^{-i\Theta_1},\cdots,e^{-i\Theta_p})\in SU(p).
\end{equation*}
We denote by $Q:=\mathrm{diag}(e^{i\Theta_1},e^{i\sum_{j=1}^2\Theta_j},\cdots,e^{i\sum_{j=1}^{p-1}\Theta_j})$, and set 
\begin{equation}
A=\left[\begin{array}{c:c}
0 & I_{p-1} \\
\hdashline 1 & 0
\end{array}\right],\quad B=\left[\begin{array}{c:c}
0 & 1 \\
\hdashline Q & 0
\end{array}\right].
\end{equation}
Then $A,B\in U(p)$ and 
\begin{equation*}
  [A,B]:=ABA^{-1}B^{-1}=\mathrm{diag}(e^{i\Theta_1},\cdots,e^{i\Theta_{p-1}},e^{-i\sum_{j=1}^{p-1}\Theta_j})=(  \phi(c_1)\cdots\phi(c_n))^{-1}.
\end{equation*}
Now we define
\begin{equation}\label{defn-rep-2}
  \phi(a_1)=A,\,\phi(b_1)=B,\,\phi(a_i)=\phi(b_i)=I\, \text{ for } 2\leq i\leq g.
\end{equation}
 Then $\phi:\pi_1(\Sigma)\to U(p)$ defined by \eqref{defn-rep-1} and \eqref{defn-rep-2} is a representation, and 
 \begin{equation*}
  \mathrm{sign}(\phi)=\sum_{i=1}^n\sum_{j=1}^p\left(\mathrm{sgn}(\theta_{i,j})-\frac{\theta_{i,j}}{\pi}\right)=m.
\end{equation*}

\end{proof}

Using Propositions \ref{g=0} and \ref{U}, we get that the set of values of the signature for representations $\phi:\pi_1(\Sigma)\to U(p)$ ($p\geq 1$) is as follows:
\begin{equation*}
   \begin{cases}
 [-p(n-2),p(n-2)]	& \text{ for }g=0 \text{ and }n\geq 2\\
 [2-np,np-2]\cup \{0\}	&\text{ for } g\geq 1 \text{ and } n\geq 1.
 \end{cases}
\end{equation*}
This completes the proof of Theorem \ref{U(p)}.

\subsection{$U(p,p)\times U(q-p)$}

 Any representation $\phi:\pi_1(\Sigma)\to U(p,p)\times U(q-p)\subset U(p,q)$ takes the form $\phi=(\phi_1,\phi_2)$, where $\phi_1:\pi_1(\Sigma)\to U(p,p)$ and $\phi_2:\pi_1(\Sigma)\to U(q-p)$ are representations. Using Propositions \ref{g=0} and \ref{U}, the set of values of the signature is as follows:
\begin{equation*}
   \begin{cases}
 [-(p+q)(n-2),(p+q)(n-2)]	& \text{ for }g=0 \text{ and }n\geq 2\\
[-2p(2g-2+n),2p(2g-2+n)] &\text{ for }g\geq 1, n(q-p)\leq 1\\
 [-p(n+4g-4)-qn+2,p(n+4g-4)+qn-2]	&\text{ for } g\geq 1,  n(q-p)\geq 2.
 \end{cases}
\end{equation*}

\begin{rem}
Let us call a representation $\phi \in \operatorname{Hom}\left(\pi_1(\Sigma), U(p, q)\right)$ a \emph{maximal representation of signature} if $\operatorname{sign}(\phi) \geq \operatorname{sign}(\psi)$ for every $\psi \in \operatorname{Hom}\left(\pi_1(\Sigma), U(p, q)\right)$. Analogously, the concept of maximal representation applies to the Toledo invariant, as described in \cite[Definition 2]{BIW}. According to \cite[Theorem 5]{BIW}, maximal representations of the Toledo invariant are contained in conjugates of the subgroup $U(p, p) \times U(q-p) \subset U(p, q)$. This raises the question of whether the same result holds for \emph{maximal representations of signature}. 
\end{rem}

\end{document}